\begin{document}
\input xy
\xyoption{all}

\newcommand{\iadd}{\operatorname{iadd}\nolimits}
\newcommand{\gid}{\operatorname{grinj.dim}\nolimits}

\renewcommand{\mod}{\operatorname{mod}\nolimits}
\newcommand{\proj}{\operatorname{proj}\nolimits}
\newcommand{\inj}{\operatorname{inj.}\nolimits}
\newcommand{\rad}{\operatorname{rad}\nolimits}
\newcommand{\soc}{\operatorname{soc}\nolimits}
\newcommand{\ind}{\operatorname{inj.dim}\nolimits}
\newcommand{\Ginj}{\operatorname{Ginj}\nolimits}
\newcommand{\Mod}{\operatorname{Mod}\nolimits}
\newcommand{\R}{\operatorname{R}\nolimits}
\newcommand{\End}{\operatorname{End}\nolimits}
\newcommand{\colim}{\operatorname{colim}\nolimits}
\newcommand{\gldim}{\operatorname{gl.dim}\nolimits}
\newcommand{\cone}{\operatorname{cone}\nolimits}
\newcommand{\rep}{\operatorname{rep}\nolimits}
\newcommand{\Ext}{\operatorname{Ext}\nolimits}
\newcommand{\Tor}{\operatorname{Tor}\nolimits}
\newcommand{\Hom}{\operatorname{Hom}\nolimits}
\newcommand{\Top}{\operatorname{top}\nolimits}
\newcommand{\supp}{\operatorname{supp}\nolimits}
\newcommand{\red}{\operatorname{red}\nolimits}
\newcommand{\rdim}{\operatorname{\bf r\text{-}dim}\nolimits}
\newcommand{\hdim}{\operatorname{\bf h\text{-}dim}\nolimits}

\newcommand{\Coker}{\operatorname{Coker}\nolimits}
\newcommand{\thick}{\operatorname{thick}\nolimits}
\newcommand{\rank}{\operatorname{rank}\nolimits}
\newcommand{\Gproj}{\operatorname{Gproj}\nolimits}
\newcommand{\Len}{\operatorname{Length}\nolimits}
\newcommand{\RHom}{\operatorname{RHom}\nolimits}
\renewcommand{\deg}{\operatorname{deg}\nolimits}
\renewcommand{\Im}{\operatorname{Im}\nolimits}
\newcommand{\Ker}{\operatorname{Ker}\nolimits}
\newcommand{\Coh}{\operatorname{Coh}\nolimits}
\newcommand{\Id}{\operatorname{Id}\nolimits}
\newcommand{\dimv}{\operatorname{\mathbf{dim}}\nolimits}
\newcommand{\Aus}{\operatorname{Aus}\nolimits}
\newcommand{\CM}{\operatorname{CM}\nolimits}
\newcommand{\For}{\operatorname{{\bf F}or}\nolimits}
\newcommand{\coker}{\operatorname{Coker}\nolimits}
\renewcommand{\dim}{\operatorname{dim}\nolimits}
\renewcommand{\div}{\operatorname{div}\nolimits}
\newcommand{\Ab}{{\operatorname{Ab}\nolimits}}
\newcommand{\diag}{{\operatorname{diag}\nolimits}}
\newcommand{\dg}{{\operatorname{dg}\nolimits}}

\renewcommand{\Vec}{{\operatorname{Vec}\nolimits}}
\newcommand{\pd}{\operatorname{proj.dim}\nolimits}
\newcommand{\gr}{\operatorname{gr}\nolimits}
\newcommand{\id}{\operatorname{inj.dim}\nolimits}
\newcommand{\Gd}{\operatorname{G.dim}\nolimits}
\newcommand{\Ind}{\operatorname{ind}\nolimits}
\newcommand{\add}{\operatorname{add}\nolimits}
\newcommand{\pr}{\operatorname{pr}\nolimits}
\newcommand{\oR}{\operatorname{R}\nolimits}
\newcommand{\oL}{\operatorname{L}\nolimits}
\newcommand{\Perf}{{\mathfrak Perf}}
\newcommand{\cc}{{\mathcal C}}
\newcommand{\gc}{{\mathcal GC}}
\newcommand{\ce}{{\mathcal E}}
\newcommand{\cs}{{\mathcal S}}
\newcommand{\cf}{{\mathcal F}}
\newcommand{\cx}{{\mathcal X}}
\newcommand{\ct}{{\mathcal T}}
\newcommand{\cu}{{\mathcal U}}
\newcommand{\cv}{{\mathcal V}}
\newcommand{\cn}{{\mathcal N}}
\newcommand{\mcr}{{\mathcal R}}
\newcommand{\ch}{{\mathcal H}}
\newcommand{\ca}{{\mathcal A}}
\newcommand{\cb}{{\mathcal B}}
\newcommand{\ci}{{\mathcal I}}
\newcommand{\cj}{{\mathcal J}}
\newcommand{\cm}{{\mathcal M}}
\newcommand{\cp}{{\mathcal P}}
\newcommand{\cg}{{\mathcal G}}
\newcommand{\cw}{{\mathcal W}}
\newcommand{\co}{{\mathcal O}}
\newcommand{\cq}{{\mathcal Q}}
\newcommand{\cd}{{\mathcal D}}
\newcommand{\ck}{{\mathcal K}}
\newcommand{\calr}{{\mathcal R}}
\newcommand{\ol}{\overline}
\newcommand{\ul}{\underline}
\newcommand{\st}{[1]}
\newcommand{\ow}{\widetilde}
\renewcommand{\P}{\mathbf{P}}
\newcommand{\pic}{\operatorname{Pic}\nolimits}
\newcommand{\Spec}{\operatorname{Spec}\nolimits}
\newtheorem{theorem}{Theorem}[section]
\newtheorem{acknowledgement}[theorem]{Acknowledgement}
\newtheorem{algorithm}[theorem]{Algorithm}
\newtheorem{axiom}[theorem]{Axiom}
\newtheorem{case}[theorem]{Case}
\newtheorem{claim}[theorem]{Claim}
\newtheorem{conclusion}[theorem]{Conclusion}
\newtheorem{assumption}[theorem]{Assumption}
\newtheorem{conjecture}[theorem]{Conjecture}
\newtheorem{construction}[theorem]{Construction}
\newtheorem{corollary}[theorem]{Corollary}
\newtheorem{criterion}[theorem]{Criterion}
\newtheorem{definition}[theorem]{Definition}
\newtheorem{example}[theorem]{Example}
\newtheorem{exercise}[theorem]{Exercise}
\newtheorem{lemma}[theorem]{Lemma}
\newtheorem{notation}[theorem]{Notation}
\newtheorem{problem}[theorem]{Problem}
\newtheorem{proposition}[theorem]{Proposition}
\newtheorem{remark}[theorem]{Remark}
\newtheorem{solution}[theorem]{Solution}
\newtheorem{summary}[theorem]{Summary}
\newtheorem*{thm}{Theorem}

\def \bp{{\mathbf p}}
\def \bA{{\mathbf A}}
\def \bL{{\mathbf L}}
\def \bF{{\mathbf F}}
\def \bS{{\mathbf S}}
\def \bC{{\mathbf C}}
\def \bn{{\mathbf n}}
\def \bm{{\mathbf m}}
\def \scrA{{\mathscr A}}
\def \scrB{{\mathscr B}}
\def \scrC{{\mathscr C}}
\def \scrF{{\mathscr F}}
\def \Z{{\Bbb Z}}
\def \F{{\Bbb F}}
\def \D{{\Bbb D}}
\def \C{{\Bbb C}}
\def \N{{\Bbb N}}
\def \Q{{\Bbb Q}}
\def \G{{\Bbb G}}
\def \P{{\Bbb P}}
\def \K{{\Bbb K}}
\def \E{{\Bbb E}}
\def \A{{\Bbb A}}
\def \L{{\Bbb L}}
\def \BH{{\Bbb H}}
\def \T{{\Bbb T}}
\newcommand {\lu}[1]{\textcolor{red}{$\clubsuit$: #1}}
\newcommand {\zhu}[1]{\textcolor{red}{$\spadesuit$: #1}}

\title[Singularity categories of representations of algebras over local rings]{Singularity categories of representations of algebras over local rings}

\author[Lu]{Ming Lu}
\address{Department of Mathematics, Sichuan University, Chengdu 610064, P.R.China}
\email{luming@scu.edu.cn}

\subjclass[2000]{16E45, 18E30, 18E35}
\keywords{Representations of quivers over local rings; Gorenstein algebras; Orbit categories; Singularity categories, CM-finite}

\begin{abstract}
Let $\Lambda$ be a finite-dimensional algebra with finite global dimension, $R_k=K[X]/(X^k)$ be the $\Z$-graded local ring with $k\geq1$, and $\Lambda_k=\Lambda\otimes_K R_k$. We consider the singularity category $\cd_{sg}(\mod^\Z(\Lambda_k))$ of the graded modules over $\Lambda_k$. It is showed that there is a tilting object in $\cd_{sg}(\mod^\Z(\Lambda_k))$ such that its endomorphism algebra is isomorphic to the triangular matrix algebra $T_{k-1}(\Lambda)$ with coefficients in $\Lambda$ and there is a triangulated equivalence between
$\cd_{sg}(\mod^{\Z/k\Z}(\Lambda))$ and the root category of $T_{k-1}(\Lambda)$. Finally, a classification of $\Lambda_k$ up to the Cohen-Macaulay representation type is given.

\end{abstract}

\maketitle

\section{Introduction}

The singularity category of an algebra is defined to be the Verdier quotient of the bounded derived category with respect to the thick subcategory formed by complexes isomorphic to bounded complexes of finitely generated projective modules \cite{Bu}, see also \cite{Ha3}. Recently, D. Orlov's global version \cite{Or1} attracted a lot of interest in algebraic geometry and theoretical physics. In particular, the singularity category measures the homological singularity of an algebra \cite{Ha3}: the algebra has finite global dimension if and only if its singularity category is trivial.

Singularity categories have deep relationship with Gorenstein algebras.
A fundamental result of R. Buchweitz \cite{Bu} and D. Happel \cite{Ha3} states that for a Gorenstein algebra $A$, its singularity category is triangulated equivalent to the stable category of Gorenstein projective (also called (maximal) Cohen-Macaulay) $A$-modules, which generalizes J. Rickard's result \cite{Ri} on self-injective algebras.
For any artin algebra $A$, denote by $\Gproj(A)$ its subcategory of Gorenstein projective modules.
Inspired by the representation type of algebras, one can define Cohen-Macaulay representation type of algebras, explicitly, if $\Gproj(A)$ has only finitely many non-isomorphic indecomposable objects, then $A$ is called to be of finite Cohen-Macaulay type or CM-finite, see e.g. \cite{LiZ,Be2}; otherwise, $A$ is called to be of infinite Cohen-Macaulay type or CM-infinite.

Another motivation of this paper is the root category of a finite-dimensional algebra. Root category was first introduced by D. Happel \cite{Ha1} for finite-dimensional hereditary
algebras. Let $A$ be a finite-dimensional hereditary algebra over a field $K$. Let $\cd^b(\mod(A))$ be the
derived category of finitely generated right $A$-modules. Then the root category $\mcr_A$ of $A$ is
defined to be the $2$-periodic orbit category $\cd^b(\mod(A))/\Sigma^2$, where $\Sigma$ is the shift functor. It was proved by L. Peng and J. Xiao \cite{PX} that the root category $\mcr_A$ is a triangulated category. With this triangle structure,
L. Peng and J. Xiao \cite{PX2} constructed a so called Ringel-Hall Lie algebra associated to each root
category and realized all the symmetrizable Kac-Moody Lie algebras. In general, for $A$ not hereditary, $\cd^b(\mod(A))/\Sigma^2$ is not triangulated, with the help of the DG orbit categories defined in \cite{Ke}, one can construct another
$2$-periodic triangulated category which is a triangulated hull of $\cd^b(\mod(A))/\Sigma^2$. This triangulated hull is also called the root category of $A$ and is denoted also by $\mcr_A$, see e.g. \cite{Fu}.

Let $\Lambda$ be a finite-dimensional algebra with finite global dimension, $R_k=K[X]/(X^k)$ be the $\Z$-graded local ring with $X$ degree $1$ where $k$ is a positive integer, and $\Lambda_k=\Lambda\otimes_K R_k$. Then $\Lambda_k$ is a positively graded Gorenstein algebra. C. M. Ringel and M. Schmidmeier \cite{RS} investigate the Gorenstein projective modules over $\Lambda_k$ with $\Lambda$ hereditary of type $\A_2$ by using submodule categories, and describe its Auslander-Reiten quiver for $k<6$. The structure of this kind of submodule categories has been studied by many people, see \cite{RS0,RS,Sim,Sim2,Sim3} and the references therein, and have been discovered to be related to weighted projective lines, see \cite{KLM0,KLM,Len} and the references therein.

Later, C. M. Ringel and P. Zhang \cite{RZ} prove that the singularity category of $\Lambda_2$ is triangulated equivalent to the triangulated orbit category $\cd^b(\mod(\Lambda))/\Sigma$ for $\Lambda$ a path algebra $KQ$, where $Q$ is an acyclic quiver.
X.-H. Luo and P. Zhang generalize these works and introduce monic representations to describe the Gorenstein projective modules over $A\otimes_K KQ$ (also $A\otimes_K (KQ/I)$ with $I$ generated by monomial relations) with $A$ a finite-dimensional algebra \cite{LuoZ1,LuoZ2}. Recently, D. Shen describes Gorenstein projective modules over the tensor product of two algebras in terms of their underlying one-sided modules \cite{Shen}, see also \cite{HLXZ}.

In this paper, we mainly consider the singularity categories $\cd_{sg}(\mod(\Lambda_k))$ and $\cd_{sg}(\mod^\Z(\Lambda_k))$ over $\Lambda_k$.

K. Yamaura \cite{Ya} proved that for a positively graded self-injective algebra, its stable category of the $\Z$-graded modules admits a tilting object. Following him, we prove that the singularity category $\cd_{sg}(\mod^\Z(\Lambda_k))$ of the $\Z$-graded modules over $\Lambda_k$ has a tilting object with the same construction, in particular, its endomorphism algebra is isomorphic to the triangular matrix algebra $T_{k-1}(\Lambda)$ with coefficients in $\Lambda$. In this way, we get that $\cd_{sg}(\mod^\Z(\Lambda_k))$ is triangulated equivalent to the bounded derived category $\cd^b(\mod(T_{k-1}(\Lambda)))$. Viewing $\Lambda_k$ as a $\Z/k\Z$-graded algebra naturally and considering the singularity category $\cd_{sg}(\mod^{\Z/k\Z}(\Lambda_k))$, we prove that the above triangulated equivalence induces a triangulated equivalence $\cd_{sg}(\mod^{\Z/k\Z}(\Lambda_k))\simeq \mcr_{T_{k-1}(\Lambda)}$. Furthermore, when $k=2$, this result recovers the result of \cite{RZ}: $\cd_{sg}(\mod (\Lambda_2))\simeq\cd^b(\mod(\Lambda))/\Sigma$ if $\Lambda$ is hereditary. Finally, we classify $\Lambda_k$ up to the Cohen-Macaulay representation type, and give some examples to describe the Auslander-Reiten quivers for some $\Lambda_k$ of CM-finite type.

\vspace{0.2cm} \noindent{\bf Acknowledgments.}
The author deeply thanks Professor Bin Zhu for his guidance, inspiration, helpful discussion and comments.
The work was done during the stay of the author at the Department of Mathematics,
University of Bielefeld. He is deeply indebted to Professor Henning Krause for his kind
hospitality, inspiration and continuous encouragement. The author deeply thanks Professor Helmut Lenzing for helpful discussion on the tubular algebras.

The author deeply thanks the referee for his useful corrections and remarks.
The author was supported by the National Natural Science Foundation of China (No. 11401401 and No. 11601441).

\section{Preliminaries}
\subsection{Notations}

Throughout this paper $K$ is an algebraically closed field and algebras are finite-dimensional $K$-algebras unless specified. %We denote by $D$ the $K$-dual, i.e. $D(-)=\Hom_K(-,K)$.

For a $K$-algebra $A$, we denote

$\triangleright$ $J_A$ --the Jacobson radical of $A$,

$\triangleright$ $A^{op}$ -- the opposite algebra,

$\triangleright$ $\Mod(A)$ -- category of the right $A$-modules,

$\triangleright$ $\mod(A)$ -- category of finitely generated right $A$-modules,

$\triangleright$ $\proj(A)$ -- category of finitely generated right projective $A$-modules,

%$\triangleright$ $\underline{\mod}\, A$ -- stable category of $\mod(A)$,

$\triangleright$ $D=\Hom_K(-,K)$ -- the standard duality,

$\triangleright$ $\cd(\Mod(A))$ -- derived category of the right $A$-modules,

$\triangleright$ $\cd^b(\mod(A))$ -- bounded derived category of finitely generated right $A$-modules,

%$\triangleright$ $\ck(\Mod(A))$ -- category of finitely generated right projective $A$-modules,

$\triangleright$ $\Sigma$ -- the shift functor,

$\triangleright$ $\gldim A$ -- global dimension of $A$.

We identify $\mod(A^{op})$ with the category of the finitely generated left $A$-modules.
For an additive category $\ca$, we use $\Ind\ca$ to denote the set of all non-isomorphic indecomposable objects in $\ca$.

\subsection{Group graded algebras}
\label{subsec:Group graded algebras}
Let $G$ be an abelian group with its multiplication denoted by $+$ and its identity element by $0$.
Let $A=\bigoplus_{i\in G} A_i$ be a $G$-graded algebra. A $G$-graded $A$-module $X$ is of form $\bigoplus_{i\in G}X_i$, where $X_i$ is the degree $i$ part of $X$.
The category $\mod^G(A)$ of (finitely generated) $G$-graded $A$-modules is defined as follows.
\begin{itemize}
\item The objects are $G$-graded $A$-modules,
\item For $G$-graded $A$-modules $X$ and $Y$, the morphism space from $X$ to $Y$ in $\mod^G(A)$ is defined by
$$\Hom_{\mod^G(A)}(X,Y):=\{ f\in \Hom_A(X,Y)| f(X_i)\subseteq Y_i\mbox{ for any }i\in G\}.$$
\end{itemize}
We denote by $\proj^G(A)$ the full subcategory of $\mod^G(A)$ consisting of projective objects.

For $i\in G$, the \emph{grade shift functor} $(i):\mod^\Z(A)\rightarrow \mod^\Z(A)$, $X \mapsto X(i)$, is defined by letting $X(i) =\oplus_{j\in G}X(i)_j$, where $X(i)_j:=X_{j+i}$.
Then for any two $G$-graded $A$-modules $X,Y$, we have
$$\Hom_A(X,Y)=\bigoplus_{i\in G}\Hom_{\mod^G(A)}(X,Y(i)).$$

It is a well-known fact that there is a forgetful functor $F:\mod^G(A)\rightarrow \mod(A)$, which associates to $M\in \mod^G(A)$ the underlying
ungraded $A$-module.

Recall that $D:\mod(A)\rightarrow\mod(A^{op})$ is the standard duality. It induces the following duality.
For any $X\in \mod^G(A)$, we regard $D(X)$ as a $G$-graded $A^{op}$-module by defining $(D(X))_i:=D(X_i)$ for any $i\in G$. Then we have the duality
$D:\mod^G(A)\rightarrow \mod^G(A^{op})$.

Under a suitable assumption of $A$, we give a description of simple objects, projective objects, injective objects in $\mod^G(A)$ up to isomorphisms.

\begin{proposition}[\cite{Ya}]\label{proposition characterize of simple porjective injective modules of graded algebras}
Assume that $J_A=J_{A_0}\oplus (\bigoplus_{i\in G\setminus\{0\}}A_i)$. We take a set $\overline{PI}$ of idempotents of $A_0$ such that $\{eA_0 \mid e\in\overline{PI}\}$ is a complete list of the non-isomorphic indecomposable projective $A_0$-modules. Then the following assertions hold.
\begin{itemize}
\item[(a)] Any complete set of orthogonal primitive idempotents of $A_0$ is that of $A$.
\item[(b)] A complete list of the non-isomorphic simple objects in $\mod^G(A)$ is given by
$$\{ S(i)\mid i\in G,S \mbox{ is a simple }A_0\mbox{-module}\}.$$
\item[(c)] A complete list of the non-isomorphic indecomposable projective objects in $\mod^G(A)$ is given by
$$\{e(i)A \mid i\in G,e\in\overline{PI}\}.$$
\item[(d)] A complete list of the non-isomorphic indecomposable injective objects in $\mod^G(A)$ is given by
$$\{ D( Ae)(i)\mid i\in G,e\in\overline{PI}\}.$$
\end{itemize}
\end{proposition}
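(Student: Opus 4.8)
The plan is to exploit the hypothesis $J_A=J_{A_0}\oplus(\bigoplus_{i\in G\setminus\{0\}}A_i)$, which guarantees that $J_A$ is a homogeneous two-sided ideal and that the quotient $A/J_A\cong A_0/J_{A_0}$ is semisimple and concentrated in degree $0$. Everything else flows from this description of the top.

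First I would settle (a). Primitivity and orthogonality of a set of idempotents lying in $A_0$ are detected after passing to the semisimple quotient: an idempotent $e\in A_0$ is primitive in $A$ precisely when $e(A/J_A)$ is a simple module, and since $A/J_A=A_0/J_{A_0}$ this is exactly the condition that $e$ be primitive in $A_0$. As the idempotents are literally the same elements, orthogonality and the relation $1=\sum e$ are unchanged, so a complete set of orthogonal primitive idempotents of $A_0$ remains one for $A$.

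Next, for (b), I would argue that any simple object $X$ of $\mod^G A$ is annihilated by $J_A$. Since $J_A$ is homogeneous, $XJ_A$ is a graded submodule; by the graded Nakayama lemma $XJ_A\neq X$ for $X\neq0$, so simplicity forces $XJ_A=0$ and $X$ becomes a graded module over the degree-$0$ semisimple algebra $A_0/J_{A_0}$. Because this algebra sits entirely in degree $0$, each homogeneous component $X_i$ is an $A_0/J_{A_0}$-submodule, and the decomposition $X=\bigoplus_i X_i$ is already a decomposition in $\mod^G A$; simplicity then forces $X$ to be concentrated in a single degree $i$ and to be a simple $A_0$-module there. This yields exactly the list $\{S(i)\}$.

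Finally, (c) and (d) follow formally. Using the complete set of orthogonal primitive idempotents from (a), write $A=\bigoplus_{e}eA$ as graded right modules; each $eA$ is indecomposable projective because $e$ is primitive, and $eA/eJ_A$ is the simple top concentrated in degree $0$. Hence $\{eA(i)\}$ are indecomposable projectives, and matching tops $eA(i)\twoheadrightarrow S_e(i)$ sets up a bijection with the simples of (b), proving completeness. For the injectives I would apply the graded $K$-duality $D\colon\mod^G A\to\mod^G A^{\mathrm{op}}$, a contravariant equivalence carrying indecomposable projectives to indecomposable injectives; feeding in the projective classification over $A^{\mathrm{op}}$ gives the list $\{D(Ae)(i)\}$. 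The one genuine subtlety is step (b): one must verify both the graded Nakayama lemma and the concentration-in-a-single-degree phenomenon, which is where the homogeneity of $J_A$ is essential; the remaining parts are then bookkeeping via the top--socle duality.
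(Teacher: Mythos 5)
The paper itself contains no proof of this proposition: it is imported verbatim from Yamaura \cite{Ya}, so there is no internal argument to compare against; your proof is correct and follows the same standard route as the proof in \cite{Ya} (the hypothesis makes $J_A$ homogeneous with $A/J_A\cong A_0/J_{A_0}$ concentrated in degree $0$; Nakayama plus the degree-wise decomposition handles the simples; lifted idempotents handle the projectives; the duality $D$ handles the injectives). Two points you leave implicit but should record: in (b) you only prove that every simple object has the form $S(i)$, and one must also check that each $S(i)$ is a well-defined simple object of $\mod^G A$ --- immediate in your framework, since a simple $A_0$-module is a module over $A_0/J_{A_0}\cong A/J_A$ and its graded submodules are exactly its $A_0$-submodules, the well-definedness of the action resting on $A_iA_{-i}\subseteq J_A\cap A_0=J_{A_0}$, which is where the hypothesis enters again; moreover, (a), (c), (d) silently use finite-dimensionality of $A$ (primitivity detected modulo the radical, Krull-Schmidt or projective covers in $\mod^G A$, and the $K$-duality $D$), which is part of the standing conventions of the paper and of \cite{Ya} but not of the bare statement.
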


%The tensor algebra of two $G$-graded algebras admits a natural $G$-grading.

Let $\overline{G}=G/H$ be a factor group of $G$, where $H$ is a subgroup of $G$. Denote by $\overline{i}:=i+H$ in $\overline{G}$ for any $i\in G$.
For any $G$-graded algebra $A$, it can be viewed as a $\overline{G}$-graded algebra, i.e., $$A_{\overline{i}}=\bigoplus_{j\in i+H}A_j,\,\,\forall\overline{i}\in \overline{G}.$$
Furthermore, in this case, any $G$-graded $A$-module $X$ can be viewed as a $\overline{G}$-graded module by letting
$$X_{\overline{i}}=\bigoplus_{j\in i+H}X_j,\,\,\forall \overline{i}\in \overline{G}.$$
Then there is an exact functor $F_{\overline{G}}:\mod^G(A)\rightarrow \mod^{\overline{G}}(A)$, which is also called the \emph{forgetful functor}. Let $X$ and $Y$ be $G$-graded $A$-modules. Then
$$\Hom_{\mod^{\overline{G}}(A)}(F_{\overline{G}}(X),F_{\overline{G}}(Y))=\bigoplus_{i\in H}\Hom_{\mod^G (A)}(X,Y(i)).$$

For the special case $H=G$, obviously, $\mod^{\overline{G}}(A)=\mod(A)$.%, and $F_{\overline{G}}$ is just the forgetful functor $F:\mod^{\overline{G}}(A)\rightarrow \mod(A)$.

Finally we recall the tensor algebra of two $G$-graded algebras.

For any two $G$-graded algebras $A$ and $B$, we define a $G$-grading on the tensor algebra $A\otimes_K B$ by letting
\begin{align*}
(A\otimes_K B)_i :=\left\{
a \otimes b\mid\sum a\otimes b, a\in A_k,b\in B_l, k+l=i\right\}
\end{align*}
for any $i\in G$.

Moreover let $C$ be a $G$-graded algebra, $X$ be a $G$-graded $A^{op}\otimes_KB$-module and $Y$ be a $G$-graded $B^{op}\otimes_K C$-module. We define a $G$-grading on the $A^{op}\otimes_K C$-module $X\otimes_B Y$ by
\begin{align*}
(X \otimes_B Y)_i :=\left\{\sum x \otimes y \mid x \in X_k, y \in Y_l, k + l = i\right\}
\end{align*}
for any $i\in G$.

\subsection{Gorenstein algebra and Gorenstein projective modules}

\begin{definition}
A finite-dimensional $K$-algebra $A$ is called a Gorenstein (or Iwanaga-Gorenstein) algebra if $A$ satisfies $\id\,_A A<\infty$ and $\id A_A <\infty$.
\end{definition}
Then a $K$-algebra $A$ is Gorenstein if and only if $\id A_A<\infty$ and $\pd D({}_AA)<\infty$.
For a Gorenstein algebra $A$, by Zask's Lemma we have $\id{} _AA=\id A_A$, see also \cite[Lemma 6.9]{AR} and \cite[Lemma 1.2]{Ha3}, the common value is denoted by $\Gd A$. If $\Gd A\leq d$, we say that $A$ is a \emph{$d$-Gorenstein} algebra.

To illustrate the introduced notion we present below several
examples.
\begin{example}
\label{example 1}
(a) Let $A$ be an algebra of global dimension $d$. Then $A$ is a $d$-Gorenstein algebra.

(b) Let $Z_n$ be the quiver with $n$ vertices and $n$ arrows which forms an oriented cycle. The vertex set of $Z_n$ is $\{0,1,\dots,n-1\}$. Let $I_m$ be the two-sided ideal of $KZ_n$ generated by all paths of length $m$, for $m\ge2$. Then $A=KZ_n/I_m$ is a self-injective algebra, i.e. $0$-Gorenstein algebras.

(c) Retain the notations as in (b). Let $B$ be an algebra of global dimension $d$. Then $A\otimes_K B$ is a $d$-Gorenstein algebras, which is not self-injective in general.

(d) Let $A_i$  be $d_i$-Gorenstein algebras for $i=1,2$. Then $A_1\otimes_K A_2$ is a $(d_1+d_2)$-Gorenstein algebra, see e.g. \cite{AR,MT}.

(e)  Let $Q$ be a finite acyclic quiver and $\mathcal{C}_Q$ the associated cluster category. Let $T$ be a tilting module of the path algebra $KQ$. Denote by $A={\rm End}_{\mathcal{C}_Q}(T)$ the cluster-tilted algebra. It infers from \cite{KR} that $A$ is $1$-Gorenstein, which usually has infinite global dimension.

(f) Let $A$ be a \emph{gentle algebra}. Then $A$ is a Gorenstein algebra, see \cite{GR}.
\end{example}

\begin{definition}[\cite{EJ}]
Let $A$ be a Gorenstein algebra. A finitely generated $A$-module $M$ is called to be Gorenstein projective (or (maximal) Cohen-Macaulay) if
$$\Ext^i_A(M,A)=0\mbox{ for }i\neq0.$$
\end{definition}

The full subcategory of Gorenstein projective modules in $\mod(A)$ is denoted by $\Gproj(A)$.

For a module $M$ take a short exact sequence $0\rightarrow\Omega(M)\rightarrow P\rightarrow M\rightarrow0$ with $P$ projective. The module $\Omega(M)$ is called a \emph{syzygy module} of $M$. Syzygy modules of $M$ are not uniquely determined, while they are naturally isomorphic to each other in the stable category $\underline{\mod}\, A$. For each $d \ge 1$ denote by $\Omega^d(\mod(A))$ the subcategory of modules of the form $\Omega^d(M)$ for an $A$-module $M$.

\begin{theorem}[\cite{EJ}]
   \label{theorem characterize of gorenstein property}
Let $A$ be an algebra and let $d\geq0$. Then the following statements are equivalent:
\begin{enumerate}
\item[(a)]
the algebra $A$ is $d$-Gorenstein;
\item[(b)]
$\Gproj(A)= \Omega^d(\mod(A))$.
\end{enumerate}
In this case, an $A$-module $G$ is Gorenstein projective if and only if there is an exact sequence $0\rightarrow G\rightarrow P^0\rightarrow P^1\rightarrow \cdots$ with each
$P^i$ projective.
\end{theorem}

\begin{example}
\label{example 2}
(a) Continue Example \ref{example 1} (a). The Gorenstein projective $A$-modules are just projective $A$-modules.

(b) Continue Example \ref{example 1} (b). All $A$-modules are Gorenstein projective.

(c) Continue Example \ref{example 1} (c). Note that all $A\otimes_K B$-modules are naturally right $A$ and $B$-modules. Then for an $A\otimes_K B$-module $X$, it is Gorenstein projective if and only if $X_B$ is projective, see \cite{Shen}.

(d) Continue Example \ref{example 1} (d). The $A_1\otimes_K A_2$-Gorenstein projective modules are described in \cite{Shen} in terms of their underlying onesided modules.

(e) Continue Example \ref{example 1} (e). The Gorenstein projective $A$-modules are just the torsionless modules.

(f) Continue Example \ref{example 1} (f). The Gorenstein projective $A$-modules are described in \cite{Ka}.
\end{example}

Similarly, for $G$-graded algebras, one can define the \emph{$G$-graded Gorenstein algebras}, \emph{$G$-graded Gorenstein projective modules}, see \cite{ALT}.
We denote by $\Gproj^G(A)$ the full subcategory of $\mod^G(A)$ formed by all $G$-graded Gorenstein projective modules.

For a finite-dimensional $G$-graded algebra $A$, let $F:\mod^G(A)\rightarrow \mod(A)$ be the forgetful functor, then for any $M\in\mod^G(A)$, $M$ is graded projective (resp. graded injective) if and only if $F(M)$ is projective (resp. injective) as $A$-module, see \cite[Corollary 2.1]{Na} or \cite[Proposition 1.3, Proposition 1.4]{GG}. In fact,
one has $G\text{-}\gid_A M=\id_A M$ for any graded $A$-module $M$ (see e.g. \cite[Theorem 2.1]{Na}), where $G\text{-}\gid_A M$ denotes the injective dimension in $\mod^G(A)$. %, which is called the \emph{$G$-graded injective dimension }of $M$.
Thus the $G$-graded algebra $A$ is $G$-graded $d$-Gorenstein if and only if it is $d$-Gorenstein as an ungraded ring (see e.g. \cite[Proposition 2.1]{ALT}).
It can be checked that all results concerning Gorenstein projective modules
in \cite{EJ} hold for $G$-graded Gorenstein projective modules (see e.g. \cite{ALT}).

\subsection{$\Z$-graded algebras}
Let $a$ be a nonnegative integer. In this paper, we mainly consider $\Z/a\Z$-graded algebras. Note that $\Z/a\Z$-graded algebras are just $\Z$-graded algebras if $a=0$.

From above, any $\Z$-graded algebra $A$ is $\Z/a\Z$-graded. Also we can regard $X\in\mod^\Z(A)$ as a $\Z/a\Z$-graded $A$-module.
%Denote by $F_a:\mod^\Z(A)\rightarrow \mod^{\Z/a\Z}(A)$ the forgetful functor.

Note that $\mod^{\Z/a\Z}(A)=\mod(A)$ if $a=1$.

The following example shows that every algebra can be viewed as a $\Z$-graded algebra.
\begin{example}
\label{example 5}
Let $A$ be a $K$-algebra. Then $A$ can be viewed as a $\Z$-graded algebra concentrated at degree zero. Similarly, for any $X\in\mod(A)$, it can be viewed as a $\Z$-graded $A$-module concentrated at degree zero. In this way, $\mod(A)$ is a subcategory of $\mod^\Z(A)$.
\end{example}

In the following, for a $\Z$-graded algebra $A$, by abuse of notations, we use $F:\mod^\Z(A)\rightarrow \mod(A)$ and $F:\mod^{\Z/a\Z}(A)\rightarrow \mod(A)$ to denote the forgetful functors; by noting that $\Z/a\Z$ is a quotient group of $\Z$, we have the forgetful functor $F_a:\mod^\Z (A)\rightarrow \mod^{\Z/a\Z} (A)$.
\begin{lemma}[see e.g. \cite{ALT,LZ}]\label{lemma forgetful functor}
Let $A=\bigoplus_{i\in\Z} A_i$ be a $\Z$-graded algebra. Then the forgetful functors $F:\mod^\Z (A)\rightarrow \mod(A)$ and $F:\mod^{\Z/a\Z} (A)\rightarrow \mod(A)$ induces forgetful functors from $\Gproj^\Z (A)$ to $\Gproj (A)$ and $\Gproj^{\Z/a\Z} (A)$ to $\Gproj (A)$ respectively, which are also denoted by $F$.
\end{lemma}

\begin{lemma}\label{lemma forgetful functor 2}
Let $A=\bigoplus_{i\in\Z} A_i$ be a $\Z$-graded algebra. Then the functor $F_a:\mod^\Z (A)\rightarrow \mod^{\Z/a\Z} (A)$ induces a functor from $\Gproj^\Z(A)$ to $\Gproj^{\Z/a\Z}(A)$, which is also denoted by $F_a$.
\end{lemma}
\begin{proof}
For any $M\in \mod^\Z(A)$, Lemma \ref{lemma forgetful functor} implies that $F(M)\in \Gproj(A)$. Since $F_a(M)\in\mod^{\Z/a\Z}(A)$ and $F(F_a(M))=F(M)\in \Gproj (A)$, we get that $F_a(M)\in \Gproj^{\Z/a\Z} (A)$ by Lemma \ref{lemma forgetful functor}.
\end{proof}

\begin{definition}[\cite{Or1}]
Let $A=\bigoplus_{i\in\Z}A_i$ be a $\Z$-graded algebra.
The $G$-graded singularity category is
$$\cd_{sg}(\mod^G(A)):=\cd^b(\mod^G(A))/\ck^b(\proj^G(A)),$$
where $G=\Z/a\Z$ for some integer $a\ge0$.
\end{definition}
In fact, we have the ungraded singularity category for the case $a=1$. More explicitly, the (ungraded) singularity category of $A$ is $\cd_{sg}(\mod(A)):=\cd^b(\mod(A))/\ck^b(\proj A)$.

Denote by $\pi:\cd^b(\mod(A))\rightarrow \cd_{sg}(\mod(A))$ and $\pi^G:\cd^b(\mod^G(A))\rightarrow \cd_{sg}(\mod ^GA)$
the localization functors for $G=\Z/a\Z$ with $a\geq0$.

Finally, we recall the Buchweitz-Happel's Theorem.
\begin{theorem}[\cite{Bu,Ha2,Ha3}]
Let $A$ be a $\Z$-graded Gorenstein algebra. Then $\Gproj^G(A)$ for any group $G=\Z/a\Z$ are Frobenius categories with the $G$-graded projective modules as the projective-injective objects. Furthermore, its stable category $\underline{\Gproj}^G(A)$ is triangulated equivalent to $\cd_{sg}(\mod^G(A))$.
\end{theorem}
By the way, in the above theorem, we have the ungraded
case for $a = 1$, and the $\Z$-graded case for $a=0$.

\subsection{Singularity categories of positively graded algebras}

A $\Z$-graded algebra is called \emph{positively graded} (or \emph{non-negatively graded}) if $A=\bigoplus_{i\geq0} A_i$.
Note that for a positively graded algebra $A$, the equation
$$J_A=J_{A_0}\oplus (\bigoplus_{i\neq0}A_i)=J_{A_{\bar{0}}}\oplus (\bigoplus_{\bar{i}\neq0} A_{\bar{i}})$$
always holds. So $A$ with both gradings above satisfies the assumption of Proposition \ref{proposition characterize of simple porjective injective modules of graded algebras}, see \cite[Proposition 2.18]{Ya}.

\begin{example}
\label{example 6}
(a) Let $Q$ be an arbitrary quiver, and $KQ$ be its path algebra. Then $KQ=\bigoplus_{i\geq0}KQ_i$, where for any $i\geq0$, $KQ_i$ is the subspace of $KQ$ generated by the set $Q_i$ of all paths of length $i$. Then it is well-known that
$KQ$ is a positively graded algebra.

(b) Retain the notations as in (a). Let $I$ be a homogeneous ideal of $KQ$, and $A:=KQ/I$. Then $A$ is also positively graded.
\end{example}

In the following, we always assume that $A$ is a positively graded algebra. Certainly, for a finite-dimensional positively graded algebra $A=\bigoplus_{i\geq0} A_i$ which we mainly focus on, $A$ is $d$-graded Gorenstein if and only if $A$ is $d$-Gorenstein. So we do not distinguish them in the following.

The additive functor $F_a:\mod^\Z(A)\rightarrow \mod^{\Z/a\Z}(A)$ induces an additive functor
$$\cd^b(F_a):\cd^b(\mod^\Z(A))\rightarrow \cd^b(\mod^{\Z/a\Z}(A)),$$
which is a triangulated functor. Similar to \cite[Proposition 2.25]{Ya}, we get the following lemma.

\begin{lemma}\label{lemma thick subcategory of derived categories}
We have $\thick(\Im \cd^b(F_a))=\cd^b(\mod^{\Z/a\Z}(A))$.
\end{lemma}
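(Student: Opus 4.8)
The plan is to show the two thick subcategories coincide by inclusion in both directions, using the fact that the grade-shift functors become isomorphic after applying $F_a$. First I would recall the key relation: for a $\Z$-graded $A$-module $X$, the functor $F_a$ identifies the grade shifts $X(j)$ and $X(j+a)$ up to natural isomorphism, since over $\Z/a\Z$ the shift by $a$ is trivial. Concretely, $F_a(X(a))\cong F_a(X)$ naturally, and this passes to $D^bF_a$ on the level of derived categories. The inclusion $\thick(\Im D^bF_a)\subseteq D^b(\mod^{\Z/a\Z}A)$ is immediate, since $D^b(\mod^{\Z/a\Z}A)$ is itself a thick subcategory of itself containing the image.

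For the reverse inclusion, the main point is that $\thick(\Im D^bF_a)$ contains a generating set of $D^b(\mod^{\Z/a\Z}A)$. I would argue that it suffices to show every simple object of $\mod^{\Z/a\Z}A$ lies in $\Im D^bF_a$, since the simple modules generate $D^b(\mod^{\Z/a\Z}A)$ as a thick subcategory. By Proposition~\ref{proposition characterize of simple porjective injective modules of graded algebras}(b) applied to the $\Z/a\Z$-grading, a complete list of simple objects in $\mod^{\Z/a\Z}A$ is $\{S(\bar\imath)\mid \bar\imath\in\Z/a\Z,\ S \text{ a simple }A_0\text{-module}\}$. But each such simple $\Z/a\Z$-graded module is exactly $F_a$ applied to the corresponding simple $\Z$-graded module $S(i)$ for a lift $i\in\Z$: the functor $F_a$ sends the $\Z$-graded simple $S(i)$ to the $\Z/a\Z$-graded simple $S(\bar\imath)$ because $F_a$ only collapses the grading and does not alter the underlying $A$-module structure. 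Hence every simple object of $\mod^{\Z/a\Z}A$ lies in $\Im F_a\subseteq\Im D^bF_a\subseteq\thick(\Im D^bF_a)$.

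Combining these, since $\thick(\Im D^bF_a)$ is a thick subcategory containing all simple $\Z/a\Z$-graded modules, and these generate $D^b(\mod^{\Z/a\Z}A)$ as a thick subcategory, we conclude $D^b(\mod^{\Z/a\Z}A)\subseteq\thick(\Im D^bF_a)$, giving equality. The routine verifications are that $F_a$ is exact (already noted in the preliminaries, so $D^bF_a$ is well-defined and triangulated) and that the simple $\Z/a\Z$-graded modules do generate $D^b(\mod^{\Z/a\Z}A)$ as a thick subcategory; the latter is the standard fact that in the bounded derived category of a finite-dimensional (hence Artinian) algebra the simple modules generate via finite filtrations of any finitely generated module, applied in the graded setting.

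The step I expect to be the main obstacle is making precise the claim that the simple $\Z/a\Z$-graded modules generate $D^b(\mod^{\Z/a\Z}A)$ as a thick subcategory. This requires knowing that $\mod^{\Z/a\Z}A$ has finite length objects admitting finite composition series with simple quotients of the listed form, so that each finitely generated graded module sits in finitely many short exact sequences built from simples; passing to the derived category, each such module then lies in the thick closure of the simples. One must verify that this length argument is compatible with the grading, i.e. that the composition factors genuinely are the graded simples $S(\bar\imath)$ of Proposition~\ref{proposition characterize of simple porjective injective modules of graded algebras}(b), which follows once one checks that $\mod^{\Z/a\Z}A$ is a length category with the stated simples — a consequence of $A$ being finite-dimensional and the grading being by a finite group.
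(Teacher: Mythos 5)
Your proof is correct and takes essentially the same route as the paper's own proof: the paper likewise notes that every simple object of $\mod^{\Z/a\Z}A$ lies in $\Im D^bF_a$ and then uses finite filtrations by simples, together with the fact that short exact sequences yield triangles, to conclude that $\thick(\Im D^bF_a)$ contains all of $\mod^{\Z/a\Z}A$. Your extra care in identifying the graded simples via Proposition \ref{proposition characterize of simple porjective injective modules of graded algebras}(b) and in checking the length-category point merely makes explicit what the paper treats as obvious.
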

\begin{proof}
The proof is similar to that of \cite[Proposition 2.6]{Ya}.
Obviously, $\Im \cd^b(F_a)$ contains all simple objects in $\mod^{\Z/a\Z}(A)$. For any $M\in\mod^{\Z/a\Z}(A)$, it has a finite filtration by simple objects in $\mod^{\Z/a\Z}(A)$. Since short exact sequences in $\mod^{\Z/a\Z}(A)$ gives rise to triangles in $\cd^b(\mod^{\Z/a\Z}(A))$, $\thick(\Im \cd^b(F_a))$ contains all objects in $\mod^{\Z/a\Z}(A)$, and then $\thick(\Im \cd^b(F_a))=\cd^b(\mod^{\Z/a\Z}(A))$.
\end{proof}

For any $i>0$, note that $\Z/a\Z$ is a quotient group of $\Z/ia\Z$. Then we have the forgetful functor from $\mod^{\Z/ia\Z}(A)\longrightarrow\mod^{\Z/a\Z}(A)$, denoted by $F_a^{ia}$. Obviously, $F_a= F_{a}^{ia} \circ F_{ia}$.
Similarly, $F_a^{ia}$ induces a triangulated functor $\cd^b(F_a^{ia}):\cd^b(\mod^{\Z/ia\Z}(A))\longrightarrow\cd^b(\mod^{\Z/a\Z}(A))$.

Since all the exact functors $F_{a}$ and $F_a^{ia}$ preserve projective objects, they induce the triangulated functors $\underline{F_{a}}: \cd_{sg}(\mod^\Z(A))\rightarrow \cd_{sg}(\mod^{\Z/a\Z}(A))$, and $\underline{F_a^{ia}}: \cd_{sg}(\mod^{\Z/ia \Z}A)\rightarrow \cd_{sg}(\mod^{\Z/a\Z}(A))$ respectively.
In particular, we obtain the following commutative diagram:
\[ \xymatrix{ \cd^b(\mod^{\Z}(A)) \ar[rr]^{\pi}  \ar[d]^{\cd^b(F_{ia})} \ar@/_4pc/[dd]_{\cd^b( F_a)}&&  \cd_{sg}(\mod^\Z(A))  \ar[d]^{\underline{F_{ia}}} \ar@/^4pc/[dd]^{\underline{F_a}}\\
\cd^b(\mod^{\Z/ia\Z}(A)) \ar[rr]^{\pi^{\Z/ia\Z} } \ar[d]^{\cd^b(F_a^{ia})}&& \cd_{sg}(\mod^{\Z/ia\Z}(A)) \ar[d]^{\underline{F_a^{ia}}} \\
\cd^b(\mod^{\Z/a\Z}(A)) \ar[rr]^{\pi^{\Z/a\Z}} && \cd_{sg}(\mod^{\Z/a\Z}(A)).
 } \]

Similar to the proof of Lemma \ref{lemma forgetful functor 2}, we can prove that both $F_a$ and $F_a^{ia}$ preserve Gorenstein projective modules, and then they induce triangulated functors on the stable categories. By abuse of notations, we also denote them by $\underline{F_a}:\underline{\Gproj}^\Z (A)\rightarrow \underline{\Gproj}^{\Z/a\Z}(A)$ and $\underline{F_a^{ia}}:\underline{\Gproj}^{\Z/ia\Z} (A) \rightarrow \underline{\Gproj}^{\Z/a\Z}(A)$ respectively.

\begin{lemma}\label{lemma thick subcategory of singularity categories}
Let $A$ be a positively graded Gorenstein algebra. Then we have the following
\begin{itemize}
\item[(a)]
$\underline{\Gproj}^{\Z/a\Z} (A)= \thick(\Im \underline{F_{a}})=\cd_{sg}(\mod^{\Z/a\Z}(A))$,
 \item[(b)]$\thick(\Im \cd^b (F_{a}^{ia}))=\cd^b(\mod^{\Z/a\Z}(A))$,
 \item[(c)] $\underline{\Gproj}^{\Z/a\Z} (A)=\thick( \Im\underline{F_{a}^{ia}})=\cd_{sg}(\mod^{\Z/a\Z}(A))$.
 \end{itemize}
\end{lemma}
\begin{proof}
Since the functors $\pi,\pi^{\Z/a\Z}$ and $\pi^{\Z/ia\Z}$ are dense, all assertions follow from Lemma \ref{lemma thick subcategory of derived categories} and the above commutative diagram.
\end{proof}
\subsection{Existence of tilting objects in singularity categories}
First, we recall the definition of tilting objects.
Let $\ct$ be a triangulated category. An object $U\in\ct$ is called \emph{tilting} if it satisfies the following conditions.
\begin{itemize}
\item[(T1)] $\Hom_\ct(U,\Sigma^iU)=0$ for any $i\neq0$.
\item[(T2)] $\ct=\thick_\ct(U)$.
\end{itemize}

Let $\ct$ be an algebraic triangulated Krull-Schmidt category. If $\ct$ has a tilting object $U$, then there exists a triangulated equivalence $\ct\simeq \ck^b(\proj( \End_\ct(U)))$, see \cite{Ke1}.

Return to our setting, let $A$ be a positively graded algebra.
Following \cite{Ya}, the truncation functors $$(-)_{\geq i}:\mod^\Z(A)\rightarrow\mod^\Z(A),\quad (-)_{\leq i}:\mod^\Z(A)\rightarrow \mod^\Z(A)$$
are defined as follows. For a $\Z$-graded $A$-module $X$, $X_{\geq i}$ is a $\Z$-graded sub $A$-module of $X$
defined by
$$(X_{\geq i})_j:=\left\{ \begin{array}{ccc} 0& \mbox{ if }j<i \\ X_j& \mbox{ if }j\geq i,\end{array} \right.$$
and $X_{\leq i}$ is a $\Z$-graded factor $A$-module $X/X_{\geq i+1}$ of $X$.

Now we define a $\Z$-graded $A$-module by
\begin{align}
T:=\bigoplus_{i\geq0} A(i)_{\leq0}.
\end{align}

For large enough $i$, obviously, $A(i)_{\leq0}$ is projective since $A$ is finite-dimensional. So we can regard $T$ as an object in $\cd_{sg}(\mod^\Z(A))$ (by noting that every projective modules are zero in $\cd_{sg}(\mod^\Z(A))$).

\begin{lemma}[\cite{LZ}]\label{theorem existence of tilting object}
Let $A$ be a positively graded Gorenstein algebra where $A_0$ has finite global dimension. If $T=\bigoplus_{i\geq0} A(i)_{\leq0}$ is a Gorenstein projective $A$-module, then $T$ is a tilting object in $\cd_{sg}(\mod^{\Z} (A))$.
\end{lemma}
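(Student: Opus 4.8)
The plan is to check the two defining properties of a tilting object for $T=\bigoplus_{i\ge 0}A(i)_{\le 0}$ in $\underline{\CM}^{\Z}(A)$, following Yamaura's strategy \cite{Ya}. Recall that the suspension of $\underline{\CM}^{\Z}(A)$ is the graded cosyzygy $\Omega^{-1}$, so for graded Cohen--Macaulay modules $M,N$ one has $\Hom_{\underline{\CM}^{\Z}(A)}(M,N[m])=\underline{\Hom}(M,\Omega^{-m}N)$, together with the standard identification $\underline{\Hom}(\Omega^{m}M,N)\cong\Ext^{m}_{\mod^{\Z}A}(M,N)$ for $m\ge 1$. By hypothesis each summand $A(i)_{\le 0}$ is Cohen--Macaulay, so $T\in\CM^{\Z}(A)$ and all of its syzygies and cosyzygies remain Cohen--Macaulay; it thus makes sense to test orthogonality and generation inside $\underline{\CM}^{\Z}(A)$.

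For the orthogonality $\Hom_{\underline{\CM}^{\Z}(A)}(T,T[m])=0$ $(m\ne 0)$, I would exploit the defining short exact sequence
\[
0\longrightarrow A(i)_{\ge 1}\longrightarrow A(i)\longrightarrow A(i)_{\le 0}\longrightarrow 0,
\]
whose middle term is projective; hence $\Omega\,A(i)_{\le 0}\cong A(i)_{\ge 1}$ in $\underline{\CM}^{\Z}(A)$, a module concentrated in degrees $\ge 1$. Since the graded projective cover of a module supported in positive degrees is again supported in positive degrees, an induction shows $\Omega^{m}A(i)_{\le 0}$ lives in degrees $\ge 1$ for every $m\ge 1$, whereas every $A(j)_{\le 0}$ lives in degrees $\le 0$. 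As morphisms in $\mod^{\Z}A$ preserve degree, this already forces
\[
\Hom_{\mod^{\Z}A}(\Omega^{m}A(i)_{\le 0},A(j)_{\le 0})=0=\Hom_{\mod^{\Z}A}(A(i)_{\le 0},\Omega^{m}A(j)_{\le 0})
\]
for all $m\ge 1$. Passing to stable Hom, the left-hand vanishing gives $\Ext^{m}_{\mod^{\Z}A}(A(i)_{\le 0},A(j)_{\le 0})=0$, which is $\Hom_{\underline{\CM}^{\Z}(A)}(T,T[m])=0$ for $m\ge 1$; the right-hand vanishing gives $\underline{\Hom}(A(i)_{\le 0},\Omega^{m}A(j)_{\le 0})=0$, which is $\Hom_{\underline{\CM}^{\Z}(A)}(T,T[-m])=0$ for $m\ge 1$. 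This settles condition (1).

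For generation, $\thick_{\underline{\CM}^{\Z}(A)}(T)=\underline{\CM}^{\Z}(A)$, I would show that $\thick(T)$ contains every graded simple $S(m)$; since the graded simples generate $D^{b}(\mod^{\Z}A)$, their images under $\pi$ generate $D_{sg}(\mod^{\Z}A)=\underline{\CM}^{\Z}(A)$. The module $A(0)_{\le 0}=A_{0}$ is a summand of $T$, so every indecomposable projective $A_{0}$-module, placed in degree $0$, lies in $\add T$; as $\gldim A_{0}<\infty$, each simple $A_{0}$-module $S$ in degree $0$ has a finite projective resolution that is exact as graded $A$-modules, giving $S(0)\in\thick(T)$. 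Using the identity $A(i+1)_{\le -1}=A(i)_{\le 0}(1)$ one obtains, for each $i\ge 0$, a short exact sequence
\[
0\longrightarrow A(i)_{\le 0}(1)\longrightarrow A(i+1)_{\le 0}\longrightarrow A_{i+1}\longrightarrow 0,
\]
whose cokernel is an $A_{0}$-module in degree $0$, hence filtered by copies of $S(0)$ and therefore already in $\thick(T)$; since $A(i+1)_{\le 0}\in\add T$, the associated triangle yields $A(i)_{\le 0}(1)\in\thick(T)$. Feeding this back through grade-shifted copies of the same sequence and inducting on the shift produces all $S(m)$ with $m\ge 0$, and the relation $A(i)_{\le 0}\cong A(i)_{\ge 1}[1]$ (again from the first exact sequence, where $A(i)$ is projective) together with the dual cotruncation sequences reaches the simples supported in positive degrees. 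Hence all graded simples lie in $\thick(T)$ and condition (2) holds.

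The step I expect to be the main obstacle is generation, and within it the propagation across grade shifts: $\thick(T)$ is closed under the suspension $\Omega^{-1}$ but not a priori under the grading shift $(1)$, so one cannot directly transport $S(0)\in\thick(T)$ to the other degrees. It is exactly the telescope of truncation sequences above, powered by the finiteness of $\gldim A_{0}$ (which lets one replace the regular module $A_{0}$ and the layers $A_{i+1}$ by their simple constituents without leaving $\thick(T)$), that carries membership from degree $0$ to all degrees. The orthogonality, by contrast, collapses to pure degree bookkeeping once $\Omega A(i)_{\le 0}$ is identified with the positive truncation $A(i)_{\ge 1}$.
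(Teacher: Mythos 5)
The paper never proves this lemma: it is quoted from \cite{Ya,LZ}, so your attempt has to be measured against the arguments there, which you set out to follow. Your orthogonality argument is correct and is essentially Yamaura's: $\Omega A(i)_{\leq 0}\cong A(i)_{\geq 1}$ via the sequence $0\to A(i)_{\geq 1}\to A(i)\to A(i)_{\leq 0}\to 0$, minimal syzygies of modules supported in degrees $\geq 1$ stay in degrees $\geq 1$, and degree-preserving homomorphisms between a module supported in degrees $\geq 1$ and one supported in degrees $\leq 0$ vanish. The first half of your generation argument (the simples $S(m)$ with $m\geq 0$) is also essentially correct, up to one slip: since $(-)_{\leq 0}$ is a quotient functor and $(-)_{\geq 1}$ a subfunctor, your second exact sequence is written backwards; the correct one is $0\to A_{i+1}\to A(i+1)_{\leq 0}\to A(i)_{\leq 0}(1)\to 0$, with $A_{i+1}$ concentrated in degree $0$ as the submodule. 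This slip is harmless, because the two-out-of-three property of $\thick(T)$ does not care which way the sequence points.

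The genuine gap is the last clause, where the simples concentrated in positive degrees are dispatched in one line. The ingredients you invoke there are circular. The cotruncation sequence reads $0\to A(i)_{\geq 2}\to A(i)_{\geq 1}\to A_{i+1}(-1)\to 0$, where $A_{i+1}(-1)$ is concentrated in degree $1$; it would place $A_{i+1}(-1)$ in $\thick(T)$ only if $A(i)_{\geq 2}\in\thick(T)$ were already known. But $A(i)_{\geq 2}\cong A(i)_{\leq 1}[-1]$ in $D_{sg}(\mod^{\Z}A)$, and $A(i)_{\leq 1}$ is an extension of $A(i)_{\leq 0}\in\add T$ by exactly the module $A_{i+1}(-1)$ you are trying to capture; equivalently, $A(i)_{\geq 2}=A(i+1)_{\geq 1}(-1)$ is a grade shift of an object of $\thick(T)$, and $\thick(T)$ is not closed under grade shifts. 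So on the positive side these sequences give no bootstrap: the obstacle you correctly identified (propagation across grade shifts) is resolved by your telescope only in degrees $\leq 0$, where $\add T$ supplies a starting object at every shift, and there is no such supply in positive degrees.

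What the cited proofs use at this point, and your sketch never does, are two further facts. First, over a Gorenstein algebra every graded injective module has finite projective dimension, hence vanishes in $D_{sg}(\mod^{\Z}A)$; consequently from $0\to X\to I\to Y\to 0$ with $I$ injective one gets $X\cong Y[-1]$, so one may pass to injective cosyzygies inside $D_{sg}$. Second, if $X$ is supported in degrees $\leq b$, its graded injective hull is again supported in degrees $\leq b$ and its degree-$b$ layer is the $A_0$-injective hull of $X_b$, so the degree-$b$ layer of the $n$-th injective cosyzygy of $X$ is the $n$-th $A_0$-cosyzygy of $X_b$; since $\gldim A_0<\infty$ (used here a second, independent time), finitely many injective cosyzygies strictly lower the top degree of the support, and after finitely many more steps one lands among modules supported in degrees $\leq 0$, where your first half applies. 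Without these ingredients, or some substitute for them, the simples in positive degrees are not reached and condition (2) of tilting remains unproved.
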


In the following, we assume that $T=\bigoplus_{i\geq0} A(i)_{\leq0}$ is a Gorenstein projective $A$-module. Let
\begin{align}
\label{eqn: Gamma}
\Gamma:=\End_{\cd_{sg}(\mod^{\Z} (A))}(T).
\end{align}
In general, it might be difficult to compute $\Gamma$. However, this is more manageable for certain kind of algebras $A$ recalled in the following.

Let $A$ be a positively graded Gorenstein algebra, $\Mod^\Z (A)$ be the category of all  graded right $A$-modules. We say that $A$ has \emph{Gorenstein parameter} $\ell$ if $\soc A$ is contained in $A_\ell$.
The endomorphism ring of $T$ in the graded singularity category is just the one in graded module category, thanks to the following lemma.

\begin{lemma}[\cite{Ya,LZ}]
  \label{theorem endomorphism ring of gorenstein projective modules}
Let $A$ be a positively graded Gorenstein algebra  of Gorenstein parameter $\ell$.  Assume that $T=\bigoplus_{i\geq0} A(i)_{\leq0}$ is a Gorenstein projective $A$-module.
Take a decomposition $T=\underline{T}\oplus P$ in $\Mod^\Z (A)$ where $\underline{T}$ is a direct sum of all indecomposable non-projective direct summand of $T$. Then
\begin{itemize}
\item[(a)] $\underline{T}$ is finitely generated, and is isomorphic to $T$ in $\underline{\Gproj}^{\Z} A$;

\item[(b)] there exists an algebra isomorphism $\Gamma\cong \End_{\mod^\Z(A)}(\underline{T})$;

\item[(c)] if $A_0$ has finite global dimension, then so does $\Gamma$. In this case, we have $\cd_{sg}(\mod^\Z(A))\simeq \cd^b(\mod(\Gamma))$.
\end{itemize}
\end{lemma}

Similar to \cite[Section 3.2]{Ya}, let $A$ be a positively graded Gorenstein algebra  of Gorenstein parameter $\ell$. Then $\underline{T}=\bigoplus_{i=0}^{\ell-1} A(i)_{\leq0}$.
In convenience, we write $\underline{T}$ as the following matrix form:
\begin{align}
\label{eqn: T 1}
\underline{T}=
\left( \begin{array}{l} A(\ell-1)_{\leq0}
\\
A(\ell-2)_{\leq0}
\\
\vdots
\\
A(0)_{\leq0}
 \end{array}\right)=
\begin{array}{cccccc}  1-\ell&2-\ell&\cdots &-1&0 \\ \left( \begin{array}{c} A_0\\ \\  \\ \\ \\
\end{array}\right. & \begin{array}{c} A_1\\ A_0 \\ \\ \\ \\
\end{array}&\begin{array}{c}\cdots\\ \cdots \\ \ddots\\ \\ \\
\end{array}& \begin{array}{c} A_{\ell-2}\\ A_{\ell-3} \\ \vdots\\ A_0 \\ \\
\end{array}& \left.\begin{array}{c} A_{\ell-1}\\ A_{\ell-2}  \\ \vdots \\ A_1 \\ A_0
\end{array}\right).  \end{array}
\end{align}
Here the numbers $1-\ell$, $2-\ell$, $\cdots$, $-1$, $0$ denote the degrees.
 Then Lemma \ref{theorem endomorphism ring of gorenstein projective modules} (b) shows that there exists an algebra isomorphism
\begin{align}
\label{eqn: endo ring}
\Gamma\cong \left(\begin{array}{ccccccc} A_0&A_1&\cdots &A_{l-2} &A_{l-1} \\
&A_0& \cdots & A_{l-3} &A_{l-2}\\
&& \ddots &\vdots&\vdots\\
&&& A_0&A_1\\
&&&&A_0
\end{array}  \right),
\end{align}
and we identify them via this isomorphism in the following. Then the algebra $\Gamma$ acts on $\underline{T}$ from both sides by the matrix multiplication, and we can regard $\underline{T}$ as a $\Gamma^{op}\otimes_K\Gamma$-module. In particular, $\underline{T}$ is isomorphic to $\Gamma$ as a $\Gamma^{op}\otimes_K\Gamma$-module.

On the other hand, $\underline{T}$ is a $\Z$-graded $\Gamma^{op}\otimes_K A$-module with left $\Gamma$-module structure given by the matrix multiplication.

Similar to \cite[Proposition 3.14]{Ya}, the triangulated equivalence in Lemma \ref{theorem endomorphism ring of gorenstein projective modules} (b) is given by the composition
\begin{align}
\label{eqn: composition 1}
\Xi:\cd^b(\mod(\Gamma))\xrightarrow{-\otimes^\L_\Gamma \underline{T} }\cd^b(\mod^\Z(A)) \stackrel{\pi^\Z}{\longrightarrow} \cd_{sg}(\mod^\Z(A)).
\end{align}

\subsection{Deriving DG categories}

In this subsection, first we collect basic facts on DG categories and their derived categories from \cite{Ke1}. After that, we recall the definition of derived-orbit categories defined in \cite{Ya} which is a special case of DG orbit categories defined in \cite{Ke}.

\begin{definition}[\cite{Ke1}]
An additive category $\scrA$ is called a differential graded (DG) category if the following conditions are satisfied.

(a) For any $X,Y\in\scrA$, the morphism set $\Hom_\scrA(X,Y)$ is a $\Z$-graded abelian group
$$\Hom_\scrA(X,Y)=\bigoplus_{i\in\Z} \Hom_\scrA^i(X,Y)$$
such that $gf\in\Hom_\scrA^{i+j}(X,Z)$ for any $f\in \Hom_\scrA^i(X,Y)$ and $g\in\Hom_\scrA^j(Y,Z)$ where $X,Y,Z\in\scrA$ and $i,j\in\Z$.

(b) The morphism set is endowed with a differential $d$ (i.e. $d^2=0$) of degree $1$ such that the equation
$$d(gf)=(dg)f+(-1)^j g(df)$$
hold for any $f\in \Hom_\scrA(X,Y)$ and $g\in\Hom_\scrA^j(Y,Z)$ where $X,Y,Z\in\scrA$ and $j\in\Z$.
\end{definition}

For a DG category $\scrA$, $H^0(\scrA)$ is the additive category whose objects are the same as $\scrA$, and the morphism set from $X$ to $Y$ is $H^0(\Hom_\scrA(X,Y))$.

Let $\scrA$ and $\scrB$ be DG categories. An additive functor $\scrF:\scrA\rightarrow \scrB$ is called a \emph{DG functor} if it preserves the grading and commutes with differentials.

We give the following typical examples of DG categories.
\begin{example}
\label{example 3}
(a) Let $\mathcal{A}$ be an additive category. Then $\ca$ is a DG category such that its morphism sets are concentrated at degree $0$, i.e., $\Hom_\ca^0(X,Y)=\Hom_\ca(X,Y)$ for any $X,Y\in\ca$.

(b)
Let $\ca$ be an additive category and $\cc(\ca)$ the category of complexes over $\ca$ with the morphism sets and the differential defined as follows:
\begin{itemize}
\item
For any two complexes $L,M$ and an integer $n\in\Z$, we define $\Hom^n_{\cc(\ca)}(L,M)$ to be the set formed by the morphisms $f:L\rightarrow M$ of graded objects of degree $n$, and
$\Hom_{\cc(\ca)}(L,M)=\bigoplus_{n\in\Z} \Hom_{\cc(\ca)}^n(L,M)$.
\item The differential is the commutator
$$d(f)=d_M f-(-1)^nfd_L$$ for any $f\in \Hom_{\cc(\ca)}^n(L,M)$.
\end{itemize}
In this case, $H^0(\cc(\ca))$ coincides with the homotopy category $\ck(\ca)$ of complexes over $\ca$.

Let $\cc^b(\ca)$ be the full subcategory of $\cc(\ca)$ formed by all bounded complexes. Then we have $H^0(\cc^b(\ca))=\ck^b(\ca)$.
\end{example}

\begin{example}
\label{example 4}
Any $\Z$-graded algebra $A=\bigoplus_{i\in\Z} A_i$ can be viewed as a DG category (denoted also by $A$) with trivial differential. In particular, any algebra $A$ can be viewed as a DG category whose morphism space is concentrated at degree $0$.
\end{example}

Let $\scrA$ be a small DG $K$-category. A \emph{DG $\scrA$-module} is a DG functor $\scrA^{op}\rightarrow \cc(\Mod(K))$.
Then DG $\scrA$-modules form a DG category which is denoted by $\cc_{\dg}(\scrA)$.

We denote by $\ck_{\dg}(\scrA):=H^0(\cc_{\dg}(\scrA))$ the homotopy category of DG $\scrA$-modules. By formally inverting quasi-isomorphisms in $\ck_{\dg}(\scrA)$, we obtain the derived category $\cd_{\dg}(\scrA)$ of DG $\scrA$-modules. From \cite{Ke1}, $\ck_{\dg}(\scrA)$ and $\cd_{\dg}(\scrA)$ are triangulated categories.

The following example shows that the homotopy category and derived categories of algebras are covered by the above ones.
\begin{example}
Let $A$ be a $K$-algebra. Let $\cc(\Mod(A))$ be the DG category of complexes over $\Mod(A)$ as in Example \ref{example 3} (b). We regard $A$ as a DG category as in Example \ref{example 4}.
Then we have another DG category $\cc_{\dg}(A)$ of DG $A$-modules, which coincides with $\cc(\Mod(A))$. Furthermore, $\ck_{\dg}(A)=\ck(\Mod(A))$ and $\cd_{\dg}(A)=\cd(\Mod(A))$.
\end{example}

We call a DG functor $\scrA\rightarrow \cc_{\dg}(\scrA)$ sending $X$ to $\Hom_\scrA(-,X)$ the \emph{Yoneda embedding}. In this way, $\scrA$ is a DG full subcategory of $\cc_{\dg}(\scrA)$, and $H^0(\scrA)$ is a full subcategory of $H^0(\cc_{\dg}(\scrA))=\ck_{\dg}(\scrA)$, which is also of $\cd_{\dg}(\scrA)$.

\begin{definition}[\cite{Ke}]
Let $\scrA$ be a DG category. We call the full subcategory $\thick_{\cd_{\dg}(\scrA)}(H^0(\scrA))$ of $\cd_{\dg}(\scrA)$ the triangulated hull of $H^0(\scrA)$.
\end{definition}

\subsection{Derived-orbit categories}

\begin{definition}[\cite{Ke}]
Let $\scrA$ be a DG category, and $\scrF:\scrA\rightarrow \scrA$ a DG functor. A DG category $\scrA/\scrF^+$ is defined as follows.
\begin{itemize}
\item The objects are the same as $\scrA$.
\item For $X,Y\in\scrA$, the morphism set from $X$ to $Y$ is defined by
$$\Hom_{\scrA/ \scrF^+}(X,Y):=\bigoplus_{n=0}^\infty \Hom_\scrA(\scrF^nX,Y).$$
\item The differential of $\scrA/\scrF^+$ is induced from that of $\scrA$.
\end{itemize}

The DG orbit category $\scrA/\scrF$ is defined with the objects the same as $\scrA$, and the morphism set from $X$ to $Y$ is defined by
$$\Hom_{\scrA/\scrF}(X,Y):=\colim(\Hom_{\scrA/\scrF^+}(X,Y) \xrightarrow{\scrF} \Hom_{\scrA/\scrF^+}(X,\scrF Y)\xrightarrow{\scrF}\cdots ),$$
with the differential of $\scrA/\scrF$ induced from that of $\scrA/\scrF^+$.
\end{definition}

Let $\scrA$ be a DG category, $\scrF:\scrA\rightarrow \scrA$ a DG functor, and the induced functor $H^0(\scrF):H^0(\scrA)\rightarrow H^0(\scrA)$. It is easy to see that
$$H^0(\scrA/\scrF)\simeq H^0(\scrA)/H^0(\scrF),$$
and then the triangulated hull $\thick_{\cd_{\dg}(\scrA/\scrF)}(H^0(\scrA/\scrF))$ of $H^0(\scrA/\scrF)$ is a triangulated category which contains $H^0(\scrA)/H^0(\scrF)$ as a full subcategory, and is generated by $H^0(\scrA)/H^0(\scrF)$, see \cite[Proposition 5.8]{Ya}.

Let $\Lambda$ be an algebra of finite global dimension, $M$ a bounded complex of $\Lambda^{op}\otimes_K\Lambda$-modules. Let $\scrA:=\cc^b(\proj(\Lambda))$ be the DG category of bounded complexes over $\proj(\Lambda)$. Then $H^0(\scrA)=\ck^b(\proj(\Lambda))=\cd^b(\mod(\Lambda))$. Since $\Lambda^{op}\otimes_K\Lambda$ has finite global dimension, there exists a quasi-isomorphism $pM\rightarrow M$ with a bounded complex $pM$ of projective $\Lambda^{op}\otimes_K\Lambda$-modules. Consider the DG functor:
$$\scrF:=-\otimes_\Lambda pM :\scrA\rightarrow \scrA.$$
Then we have the induced functor
$$H^0(\scrF)=-\otimes^\L_\Lambda M:\cd^b(\mod(\Lambda))\rightarrow \cd^b(\mod(\Lambda)).$$
%\begin{definition}[\cite{Ya}]
We call the triangulated hull
\begin{align}
\cd(\Lambda,M):=\thick_{\cd_{\dg}(\scrA/\scrF)} (H^0(\scrA/\scrF))
\end{align}
of $H^0(\scrA/\scrF)$ the {\rm derived-orbit category} of $(\Lambda,M)$.
%\end{definition}

Let $A$ be a positively graded Gorenstein algebra, and $\Gamma$ an algebra of finite global dimension. We regard $\Gamma$ as a $\Z$-graded algebra as in Example \ref{example 5}. Let $U\in \cd^b(\mod  ^\Z (\Gamma^{op} \otimes _KA))$, $N\in \cd^b(\mod (\Gamma^{op}\otimes_K \Gamma))$, and
$$\varphi:=-\otimes_\Gamma^\L N : \cd^b(\mod(\Gamma)) \rightarrow \cd^b(\mod(\Gamma)).$$
We consider the derived tensor functor
$$-\otimes_\Gamma^\L U: \cd^b(\mod(\Gamma)) \rightarrow \cd^b(\mod^\Z(A))$$
and the canonical functor
$\pi^\Z:\cd^b(\mod^\Z(A)) \rightarrow \cd_{sg}(\mod^\Z(A))$.
Composing them, we have a triangulated functor
$$\psi:=\pi^\Z\circ (-\otimes_\Gamma^\L U):\cd^b(\mod(\Gamma)) \rightarrow \cd_{sg}(\mod^\Z(A)).$$

\begin{theorem}(see \cite[Theorem A.20]{IO})\label{theorem IO}
We fix an integer $a$. Assume that there exists a triangle
$$P\rightarrow U(a)\rightarrow N\otimes_\Gamma^\L U\rightarrow \Sigma P$$
in $\cd^b(\mod^\Z(\Gamma^{op}\otimes_K A))$ such that $P$ belongs to $\ck^b(\proj^\Z(A))$ as an object in $\cd^b(\mod^\Z(A))$.
Then there exists an additive functor $\cd^b(\mod(\Gamma))/\varphi\rightarrow \cd_{sg}(\mod^\Z(A))/(a)$
and a triangulated functor $\widetilde{\psi}: \cd(\Gamma,N)\rightarrow \cd_{sg}(\mod^{\Z/a\Z}(A))$ which makes the diagram
\[\xymatrix{ \cd^b(\mod(\Gamma)) \ar[rr]^{\psi}\ar[d] && \cd_{sg}(\mod^\Z(A))\ar[d] \ar@/^4pc/[dd]^{\underline{F_a}} \\
\cd^b(\mod(\Gamma))/\varphi \ar[rr]\ar[d] &&  \cd_{sg}(\mod^\Z(A))/(a) \ar[d]\\
\cd(\Gamma,N) \ar[rr]^{\widetilde{\psi}} && \cd_{sg}(\mod^{\Z/a\Z}(A))  }\]
commutative.
\end{theorem}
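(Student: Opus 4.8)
The plan is to extract from the hypothesised triangle a natural comparison between $G$ and the two quotient autoequivalences, to descend it to the additive orbit categories, and finally to lift the result to the triangulated hull by means of the DG orbit formalism of \cite{Ke,Ke1}. First I would establish a natural isomorphism $(a)\circ G\cong G\circ F$ of functors $D^b(\mod\Gamma)\to D_{sg}(\mod^\Z A)$. Fixing $X\in D^b(\mod\Gamma)$ and applying the triangulated functor $X\otimes_\Gamma^\L-$ to the given triangle produces a triangle
$$X\otimes_\Gamma^\L P\to X\otimes_\Gamma^\L U(a)\to X\otimes_\Gamma^\L(N\otimes_\Gamma^\L U)\to \Sigma(X\otimes_\Gamma^\L P)$$
in $D^b(\mod^\Z A)$. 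Since $\gldim\Gamma<\infty$ we have $D^b(\mod\Gamma)=\thick(\Gamma)$, and $\Gamma\otimes_\Gamma^\L P=P$ lies in $K^b(\proj^\Z A)$ by hypothesis; because being perfect over $A$ is stable under the operations generating a thick subcategory, $X\otimes_\Gamma^\L P\in K^b(\proj^\Z A)$ for every $X$. Hence $\pi(X\otimes_\Gamma^\L P)=0$, and applying $\pi$ gives an isomorphism $\pi(X\otimes_\Gamma^\L U(a))\xrightarrow{\sim}\pi(X\otimes_\Gamma^\L N\otimes_\Gamma^\L U)$. Rewriting the two sides as $(a)(G(X))$ and $G(F(X))$, using that $X\otimes_\Gamma^\L-$ commutes with the $A$-grade shift and with tensoring by $N$, yields the isomorphism, and functoriality of the derived tensor product makes it natural in $X$.

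Next I would descend to the orbit categories. The isomorphism $(a)\circ G\cong G\circ F$ says exactly that $G$ intertwines the autoequivalence $F$ on the source with the grade shift $(a)$ on the target, which is precisely the datum required by the universal property of the additive orbit category; it therefore induces an additive functor $\bar G\colon D^b(\mod\Gamma)/F\to D_{sg}(\mod^\Z A)/(a)$ making the top square commute. Composing with the right-hand vertical functor $D_{sg}(\mod^\Z A)/(a)\to D_{sg}(\mod^{\Z/a\Z}A)$ --- which exists because $\underline{F_a}\circ(a)\cong\underline{F_a}$, so $\underline{F_a}$ factors through the quotient by $(a)$ --- I obtain an additive functor $\bar\Phi\colon H^0(\ca/F)=D^b(\mod\Gamma)/F\to D_{sg}(\mod^{\Z/a\Z}A)$, where $\ca=C_{dg}^b(\proj\Gamma)$.

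Finally I would extend $\bar\Phi$ across the triangulated hull $D(\Gamma,N)=\thick_{D(\ca/F)}(H^0(\ca/F))$ to produce $\tilde G$. This step genuinely uses the DG structure: the comparison $(a)\circ G\cong G\circ F$ must be upgraded from an $H^0$-level isomorphism to a coherent quasi-functor datum at the level of the DG category $\ca/F$, so that $-\otimes_\Gamma^\L U$ composed with $\underline{F_a}\circ\pi$ is realized by a DG functor out of $\ca/F$ into a DG enhancement of $D_{sg}(\mod^{\Z/a\Z}A)$. Induction along this DG functor carries representable $\ca/F$-modules to objects of the enhancement, hence agrees with $\bar\Phi$ on the generating subcategory $H^0(\ca/F)$ and extends it to a triangulated functor on the thick closure; since $D_{sg}(\mod^{\Z/a\Z}A)=\thick(\underline{F_a})$ by the lemma, the image lands in the whole singularity category. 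As $\tilde G$ restricts to $\bar\Phi$ on the generators, commutativity of the bottom square is automatic.

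I expect the DG lifting of the last paragraph to be the main obstacle. The naive orbit category $H^0(\ca)/H^0(F)$ is not triangulated, so the descent carried out in the first two paragraphs, while formal, does not by itself yield a triangulated functor; the real content is arranging the homotopy-coherent comparison so that it passes to $\thick_{D(\ca/F)}$. Here the hypothesis that $P$ is perfect over $A$ is essential, as it is precisely what makes the comparison morphism vanish in the singularity category and lift coherently through the DG orbit construction of \cite{Ke}, exactly as in the analogous argument of \cite{Ya}.
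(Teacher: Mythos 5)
The paper itself offers no proof of this theorem: it is quoted from \cite[Theorem A.20]{IO}, and the argument there (the same mechanism is used in \cite[Theorem 6.2]{Ya}) has exactly the three-stage architecture you describe, so your approach is the intended one rather than a divergent route. Your first two stages are correct and complete: since $\gldim\Gamma<\infty$ gives $D^b(\mod \Gamma)=\thick(\Gamma)$ and $K^b(\proj^\Z A)$ is a thick subcategory of $D^b(\mod^\Z A)$, every $X\otimes^\L_\Gamma P$ is perfect, so applying $\pi\circ(X\otimes^\L_\Gamma -)$ to the bimodule triangle yields the natural isomorphism $(a)\circ G\cong G\circ F$ (this is the same computation the paper later performs for $H^k$ with $a=k$ and $N=\Sigma^2\Gamma_k$), and the universal property of the additive orbit category together with $\underline{F_a}\circ(a)\cong\underline{F_a}$ produces the additive functor $D^b(\mod \Gamma)/F\rightarrow D_{sg}(\mod^\Z A)/(a)$ of the statement.

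Where you stop short is the third stage, which you yourself flag as the crux: you write that the $H^0$-level isomorphism must be \emph{upgraded} to a coherent quasi-functor datum, but you do not say where this coherence comes from, so as written the step still rests on an unconstructed input. The missing observation --- and the entire reason the hypothesis is formulated as a triangle in $D^b(\mod^\Z(\Gamma^{op}\otimes_K A))$ rather than as a family of objectwise isomorphisms $\pi(U(a))\cong\pi(N\otimes^\L_\Gamma U)$ --- is that the comparison map $U(a)\rightarrow N\otimes^\L_\Gamma U$ is a morphism of bimodules. Replacing $N$ and $U$ by projective bimodule resolutions $pN$ and $pU$, this map is realized by an honest morphism of DG bimodules, hence $-\otimes_\Gamma pU(a)\rightarrow -\otimes_\Gamma pN\otimes_\Gamma pU$ is a \emph{strict} DG natural transformation between DG functors from $\ca=C_{dg}^b(\proj\Gamma)$ to a DG enhancement of the graded singularity category (a DG quotient realizing $D_{sg}$), and perfectness of $P$ says precisely that this transformation becomes objectwise invertible in $H^0$ of the target. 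That strict datum is exactly what the universal property of Keller's DG orbit category \cite{Ke} consumes; it yields a DG functor out of $\ca/F$, whose induced triangle functor on $\thick_{D(\ca/F)}(H^0(\ca/F))=D(\Gamma,N)$ is $\tilde{G}$, and commutativity of the diagram holds on the generating subcategory $H^0(\ca/F)$, hence everywhere, as you say. In short, no coherence needs to be ``arranged'': it is automatic from the bimodule-level hypothesis, and making this explicit is what turns your outline into a proof.
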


\subsection{Root categories}
\begin{definition}[\cite{Ke,Fu,PX}]
Let $\Lambda$ be an algebra of finite global dimension. For $M=\Sigma^m\Lambda$, and $\scrF=-\otimes_\Lambda\Sigma^m\Lambda$, we call the derived-orbit category $\cd(\Lambda,\Sigma^m\Lambda)$ of $(\Lambda,\Sigma^m\Lambda)$ the $m$-root category of $\Lambda$, and denote it by $\mcr_{\Lambda,m}$.
\end{definition}
By definition,  the $m$-root category $\mcr_{\Lambda,m}$ is the triangulated hull of $\cd^b(\mod(\Lambda))/\Sigma^m$.
In particular, the $2$-root category is called the \emph{root category} and denote by $\mcr_\Lambda$ briefly.

It is worth noting that when $\Lambda$ is a hereditary algebra, Peng and Xiao \cite{PX} proved that the orbit category $\cd^b(\mod(\Lambda))/\Sigma^m$ is triangulated via the homotopy category of $m$-periodic complexes of projective $\Lambda$-modules for any $m>1$ (Ringel and Zhang \cite{RZ} proved for the case $m=1$). In fact, in this case, the $m$-root category $\mcr_{\Lambda,m}$ is triangulated equivalent to the orbit category $\cd^b(\mod(\Lambda))/\Sigma^m$, see \cite{PX,Ke}.

\section{Singularity categories of $\Lambda_k=\Lambda\otimes_K R_k$}

Let $\Lambda$ be a finite-dimensional algebra over the field $K$, $R_k=K[X]/(X^k)$ the truncated polynomial ring for $k\geq0$. Obviously, $R_k$ is self-injective. For the case $k=2$, we call $R_k$ {\em the algebra of dual numbers}.

Let
\begin{align}
\Lambda_k:=\Lambda\otimes_K R_k
\end{align}
Then $\Lambda_k$ is also a finite-dimensional algebra. Furthermore, any $\Lambda_k$-module is naturally a $\Lambda$-module (also a $R_k$-module).

We endow $R_k=K[X]/(X^k)$ to be a $\Z$-graded algebra with $X$ degree $1$. Then $\Lambda_k$ is a positively graded algebra, i.e., $\Lambda_k=\bigoplus_{i=0}^{k-1} (\Lambda_k)_i$, where
$(\Lambda_k)_i\cong \Lambda$ for $0\le i\le k-1$.
Similarly, $\Lambda_k$ is natural a $G$-graded algebra for any $G=\Z/a\Z$. As in Example \ref{example 1} (c), we have that $\Lambda_k$ is a $d$-Gorenstein algebra if the global dimension of $\Lambda$ is $d$.

\begin{assumption}
We always assume that $d:=\gldim\Lambda$ is finite.
\end{assumption}

For any $\Lambda_k$-module $M$, following Example \ref{example 2} (c), we have that $M$ is Gorensetin projective if $M_\Lambda$ is projective.

In this paper, we mainly consider the (graded) singularity category of $\Lambda_k$, and describe it by using derived-orbit categories.
 \subsection{A description of $\mod^\Z(\Lambda_k)$}
We give a description of $\mod^\Z(\Lambda_k)$ by using representations of quivers over rings.
Before that, we fix some notations for quivers following \cite{ASS}.

Let $Q=(Q_0,Q_1,s,t)$
be a quiver. A path of length $l$ from $a$ to $b$ is a sequence $(a\mid \alpha_1,\alpha_2,\dots,\alpha_l\mid b)$, where $\alpha_i\in Q_1$ for all $1\le i\le l$, and we have
$s(\alpha_1)=a$, $t(\alpha_i)=s(\alpha_{i+1})$ for each $1\le i<l$, and $t(\alpha_l)=b$. Such a path is denoted briefly by $\alpha_1\alpha_2\cdots \alpha_l$.
The path algebra $KQ$ of $Q$ is the $K$-algebra whose underlying $K$-linear space has as its basis the set of all paths $\alpha_1\alpha_2\cdots \alpha_l$ for $l\geq0$ in $Q$ and the product is defined by
\begin{align*}
(\alpha_1\cdots \alpha_l) (\beta_1\cdots \beta_n)=\begin{array}{cc} \delta_{t(\alpha_l),s(\beta_1)} \alpha_1\cdots \alpha_l\beta_1\cdots \beta_n,  \end{array}
\end{align*}
where $ \delta_{t(\alpha_l),s(\beta_1)}$ is the \emph{Kronecker delta}.

Let $I$ be an admissible ideal of $KQ$. Then $(Q,I)$ is called a \emph{bound quiver}, and $A=KQ/I$ the \emph{bound quiver algebra}.
%Let $\rep_K(Q,I)$ be the category of finite dimensional $K$-linear representations of $(Q,I)$. Then it is well known that there is an equivalence of categories $\mod(A)\simeq \rep_K(Q,I)$. We identify these two categories in the following.
%In general, for
For a  $K$-algebra $R$, we can consider the representations of $(Q,I)$ over $R$. Explicitly, a representation of $(Q,I)$ over $R$ is of form $X=(X_i,X_\alpha)_{i\in Q_0,\alpha\in Q_1}$ such that $X_i\in\mod(R)$, and $X_\alpha:X_{s(\alpha)}\rightarrow X_{t(\alpha)}$ is a morphism of $R$-modules which is bound by $I$. We call such a representation finite-dimensional if $\sum_{i\in Q_0} \dim_K X_i<\infty$. Let $\rep_R(Q,I)$ be the finite-dimensional representations of $(Q,I)$ over $R$. Then it is well known that
$\mod(A\otimes_K R)\simeq\rep_R(Q,I)$.  We identify these two categories in the following.

%\subsection{$\Z$-Graded modules over $\Lambda_k$}
%Finally, we describe the graded modules over $\Lambda_k$.
Let $Q_{\A_\infty^\infty}$ be the infinite quiver:
$$\cdots\xrightarrow{\alpha_{-2}} -1\xrightarrow{\alpha_{-1}} 0\xrightarrow{\alpha_0} 1\xrightarrow{\alpha_1} \cdots.$$
Let $\widetilde{R}_k:= KQ_{\A_\infty^\infty}/ I$, where $I:= \langle\alpha_i\cdots\alpha_{i+k-1} \mid i\in\Z\rangle$.
Denote by $\rep_\Lambda(Q_{\A_\infty^\infty},I)$ the category of finite-dimensional representations of $(Q_{\A_\infty^\infty},I)$ over $\Lambda$.

\begin{lemma}[\cite{Ga}]
For any $k>0$, the category $\mod^\Z(\Lambda_k)$ is equivalent to $\rep_\Lambda(Q_{\A_\infty^\infty},I)$.
\end{lemma}
From now on we will identify these two categories

For a bound quiver algebra $\Lambda$, $\Lambda_k$ is also a bound quiver algebra. We describe the associated bound quiver in the following example.
\begin{example}
Let $\Lambda=KQ^\circ/I^\circ$ be a bound quiver algebra with $(Q^{\circ},I^\circ)$ its bound quiver. Let $Q$ be the quiver obtained from $Q^\circ$ by adding one loop $\varepsilon_i$ for each vertex $i$ in $Q^{\circ}$.
For any positive integer $k$, define $A=KQ/I_k$, where $I_k$ is the ideal of $KQ$ defined by the following relations:
\begin{itemize}
\item[(H1)] For each vertex $i$ we have the \emph{nilpotent relation}
$$\varepsilon_i^k=0.$$
\item[(H2)] For each arrow $(\alpha:i\rightarrow j)\in Q^{\circ}$ we have the \emph{commutativity relation}
$$\varepsilon_i\alpha=\alpha \varepsilon_j.$$
\item[(H3)] $I^\circ\subset I_k$ by viewing $I^\circ$ to be in $KQ$ naturally.
\end{itemize}

From \cite{Le}, we get that $A\cong\Lambda_k=\Lambda\otimes_K R_k$. So $\Lambda_k$ is a finite-dimensional $K$-algebra and the representations of $\Lambda_k$ are nothing else than representations of $(Q^\circ,I^\circ)$ over the ground ring $R_k$.

This algebra is also considered in \cite{GLS} for $I^\circ=0$. In this case, for $k=1$, we have $\Lambda_1=\Lambda$, which is a hereditary algebra; for $k=2$, it is shown in \cite{RZ} that $\Lambda_2$ is a $1$-Gorenstein algebra, and that the stable category of $\Gproj(\Lambda_2)$ is triangulated equivalent to the orbit category $\cd^b(\mod(\Lambda))/\Sigma$;
for $Q^\circ$ a quiver of type $\A_2$, Ringel and Schmidmeier \cite{RS} studied the category $\Gproj(\Lambda_k)$ for any $k>0$.
\end{example}

\subsection{Existence of tilting objects}

Let $A$ be a positively graded Gorenstein algebra. Recall that $A$ has \emph{Gorenstein parameter} $\ell$ if $\soc A$ is contained in $A_\ell$.

\begin{lemma}
We have that
$\Lambda_k=\bigoplus_{i=0}^\infty (\Lambda_k)_i=\bigoplus_{i=0}^{k-1} (\Lambda_k)_i$ is a positively graded Gorenstein algebra of Gorenstein parameter $k-1$.
\end{lemma}
\begin{proof}
It follows from the definition of $\Lambda_k$.
\end{proof}

For any $m>0$, let $T_m(\Lambda)$ be the upper triangular $m\times m$ matrix algebra with coefficients in $\Lambda$, i.e.
\begin{align}
\label{def: Tm}
T_m(\Lambda)=\left(\begin{array}{cccccccc} \Lambda& \Lambda &\cdots&\Lambda\\
&\Lambda&\cdots& \Lambda\\
&&\ddots&\vdots\\
&&&\Lambda \end{array} \right)_{m\times m}.
\end{align}
It is well known that $T_m(\Lambda)$ is a finite-dimensional algebra of global dimension $d+1$ for $m>1$, and of global dimension $d$ for $m=1$.

Let
\begin{align}
 \underline{T}:=\bigoplus_{i=0}^{k-2}\Lambda_k(i)_{\leq0}
\end{align}
Following \eqref{eqn: T 1}, it is convenient to write
$\underline{T}$ as the following matrix form,
$$\underline{T}=\begin{array}{cc} \\
\left( \begin{array}{cc} \Lambda_k(k-2)_{\leq 0}\\ \Lambda_k(k-3)_{\leq 0}\\ \vdots\\ \Lambda_k(1)_{\leq 0}\\ \Lambda_k(0)_{\leq 0}
  \end{array} \right)\end{array}=
\begin{array}{cccccc}  2-k&3-k&\cdots &-1&0 \\ \left( \begin{array}{c} \Lambda\\  \\ \\ \\ \\
\end{array}\right. & \begin{array}{c} \Lambda\\ \Lambda \\ \\ \\ \\
\end{array}&\begin{array}{c}\cdots\\ \cdots \\ \ddots\\ \\ \\
\end{array}& \begin{array}{c} \Lambda\\ \Lambda \\ \vdots\\ \Lambda \\ \\
\end{array}& \left.\begin{array}{c} \Lambda\\ \Lambda  \\ \vdots \\ \Lambda \\ \Lambda
\end{array}\right)  \end{array},$$
where $2-k,3-k,\dots,-1,0$ denote the degrees. %Following \eqref{eqn: Gamma}--\eqref{eqn: composition 1},
Let
\begin{align}
\Gamma_k:= \End_{\cd_{sg}(\mod^\Z(\Lambda_k))} (\underline{T}).
\end{align}

We have the following proposition.
\begin{proposition}\label{lemma tilting object }
Retain the notations and assumption as above.
Then $\underline{T}$ is a tilting object in $\cd_{sg}(\mod^\Z(\Lambda_k))$, and
$\Gamma_k\cong T_{k-1}(\Lambda)$.

In particular, the composition
\begin{align}
\label{dfn: Xi}
\Xi:=\pi^\Z\circ(-\otimes_{\Gamma_k}^\L \underline{T} ): \cd^b(\mod(\Gamma_k)) \rightarrow \cd_{sg}(\mod^\Z(\Lambda_k))
\end{align}
is a triangulated equivalence.
\end{proposition}
\begin{proof}
The proof is similar to that in \cite[Section 6.1]{Ya}.
Note that $\Lambda_k(i)_{\leq 0}$ is a projective $\Lambda_k$-module for any $i\geq k-1$ since $\Lambda_k$ has Gorenstein parameter $k-1$.
So $T=\bigoplus_{i=0}^\infty \Lambda_k(i)_{\leq0}$ is isomorphic to $\underline{T}=\bigoplus_{i=0}^{k-2}\Lambda_k(i)_{\leq0}$ in $\cd_{sg}(\mod^\Z(\Lambda_k))$.

%$\underline $The action of $\Gamma_k$ on $\underline{T}$ from the left is given by the matrix multiplication. Thus $\underline{T}$ is isomorphic to $\Gamma_k$ as a $\Gamma_k^{op}$-module.

On the other hand, one can check that there is an exact sequence
$$0\rightarrow (\Lambda_k(i)_{\leq0})(-k)\rightarrow \Lambda_k(-1)\rightarrow \Lambda_k(i)\rightarrow \Lambda_k(i)_{\leq0}\rightarrow0$$
for each $0\leq i\leq k-2$.
So $\Omega^2(\underline{T})\cong \underline{T}(-k)$ in the stable category $\underline{\mod}^\Z (\Lambda_k)$.
Since $\Lambda_k$ is a positively graded Gorenstein algebra, we have that $\underline{T}$ is a graded Gorenstein projective $\Lambda_k$-module by the graded version of Theorem \ref{theorem characterize of gorenstein property}.
Then Lemma \ref{theorem existence of tilting object} yields that $\underline{T}$ is a tilting object in $\cd_{sg}(\mod^\Z(\Lambda_k))$.

It follows from \eqref{eqn: endo ring} that $\Gamma_k\cong T_{k-1}(\Lambda)$.

The last assertion follows from Lemma \ref{theorem endomorphism ring of gorenstein projective modules} and \eqref{eqn: composition 1}.
\end{proof}

From now on, we identify $\Gamma_k$ with $T_{k-1}(\Lambda)$. Recall that $\underline{T}$ is a $\Gamma_k^{op}\otimes_K\Gamma_k$-module  with the left and right actions given by the matrix multiplication. %Fu$\underline{T}$ is isomorphic to $\Gamma_k$ as a $\Gamma_k^{op}\otimes_K\Gamma_k$-module.

\begin{remark}\label{remark 1}
For $N>0$, it is worth noting that O. Iyama, K. Kato and Jun-ichi Miyachi defined the homotopy category of $N$-complexes and the derived category of $N$-complexes in \cite{IKM}. The homotopy category of $N$-complexes of projective $\Lambda$-modules is triangulated equivalent to the homotopy category of projective $T_{N-1}(\Lambda)$-modules for any algebra $\Lambda$, see \cite{IKM,BHN}.

Using \cite[Proposition 1.2]{Shen}, one can prove that $\Gproj^\Z(\Lambda_k)$ is equivalent to the category of $k$-complex of projective $\Lambda_k$-modules as Frobenius categories, which yields that $\underline{\Gproj}^\Z(\Lambda_k)$ is triangulated equivalent to the homotopy category of $k$-complexes of projective $\Lambda_k$-modules, and so Proposition \ref{lemma tilting object } can be proved in this way.
\end{remark}

\subsection{Main result}
Let
\begin{align}
M=\bigoplus^{2k-2}_{i=k} \Lambda_k(i)_{\geq1-k}.
\end{align}
Similarly, it is convenient to write $M$ as the following matrix form
$$M=\begin{array}{cc} \\
\left( \begin{array}{cc} \Lambda_k(2k-2)_{\geq 1-k}\\ \Lambda_k(2k-1)_{\geq 1-k}\\ \vdots\\ \Lambda_k(1-k)_{\geq 1-k}\\ \Lambda_k(k)_{\geq 1-k}
  \end{array} \right)\end{array}=
\begin{array}{cccccc}  1-k&2-k&\cdots &-2&-1 \\ \left( \begin{array}{c} \Lambda\\ \Lambda \\ \vdots \\ \Lambda \\ \Lambda\\
\end{array}\right. & \begin{array}{c} \\ \Lambda \\ \vdots \\ \Lambda \\ \Lambda\\
\end{array}&\begin{array}{c}\\ \\ \ddots \\ \cdots\\ \cdots \\
\end{array}& \begin{array}{c} \\ \\  \\ \Lambda\\ \Lambda \\
\end{array}& \left.\begin{array}{c} \\\\\\\\ \Lambda\\
\end{array}\right)  \end{array}.$$

Recall that $\Gamma_k=T_{k-1}(\Lambda)$.
The algebra $\Gamma_k$ acts on $M$ from both sides by matrix multiplication, which implies that $M$ is a $\Gamma_k^{op}\otimes_K \Gamma_k$-module. Since the action of $\Gamma_k$ on $M$ from the left commutes with that of $\Lambda_k$ from the right, we can also regard $M$ as a $\Z$-graded $\Gamma^{op}_k\otimes \Lambda_k$-module.

\begin{lemma}\label{lemma existence of some triangles}

The following assertions hold.

(a) There exist isomorphisms $M\otimes_{\Gamma_k}^\L \underline{T}\cong  M$ and $\underline{T}\otimes_{\Gamma_k}^\L \underline{T}\cong \underline{T}$ in $\cd^b(\mod^\Z(\Gamma_k^{op}\otimes_K \Lambda_k))$.

(b) There exist two short exact sequences
$$0\rightarrow M\rightarrow \bigoplus_{i=k}^{2k-2} \Lambda_k(i) \rightarrow \underline{T}(k)\rightarrow0,\mbox{ and }0\rightarrow \underline{T}\rightarrow (\Lambda_k(k-1))^{\oplus (k-1) } \rightarrow M\rightarrow0$$
in $\mod^\Z(\Gamma_k^{op}\otimes \Lambda_k)$.
\end{lemma}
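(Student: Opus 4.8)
The plan is to treat the two parts separately: part (a) follows from the fact that $\underline{T}$ is projective over $\Gamma_k$ on one side, so that derived tensoring collapses to ordinary tensoring with a free module, while both short exact sequences in part (b) will be produced as direct sums, indexed by $i=k,\dots,2k-2$, of the tautological truncation sequences and of the obvious projective-cover sequences. The bookkeeping of the block/matrix forms of $\underline{T}$ and $M$ displayed above is what makes all identifications explicit.

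For part (a), the starting point is the computation in the proof of Lemma~\ref{lemma tilting object }, where the left $\Gamma_k$-action on the block form of $\underline{T}$ is matrix multiplication, identifying $\underline{T}\cong\Gamma_k$ as a left $\Gamma_k$-module. In particular $\underline{T}$ is a free left $\Gamma_k$-module of rank one, hence flat, so for any complex $N$ of right $\Gamma_k$-modules the canonical morphism $N\otimes_{\Gamma_k}^\L\underline{T}\to N\otimes_{\Gamma_k}\underline{T}$ is an isomorphism and the higher $\operatorname{Tor}$ terms vanish. I would then apply this with $N=M$ (a right $\Gamma_k$-module through its $\Gamma_k^{op}\otimes_K\Gamma_k$-structure) and with the regular bimodule $N=\Gamma_k\cong\underline{T}$, reducing the claim to the identifications $M\otimes_{\Gamma_k}\underline{T}\cong M$ and $\Gamma_k\otimes_{\Gamma_k}\underline{T}\cong\underline{T}$ in $\mod^\Z(\Gamma_k^{op}\otimes_K H^k)$. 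These are read off from the block forms: tensoring over $\Gamma_k=T_{k-1}(\Lambda^\circ)$ contracts each block, and since both $\Gamma_k$ and $H^k$ act on the $\Lambda^\circ$-entries through $\Lambda^\circ$, the resulting left $\Gamma_k$- and right $H^k$-actions agree with the original ones.

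For part (b), the first sequence is the direct sum over $i=k,\dots,2k-2$ of the canonical truncation short exact sequences
$$0\rightarrow H^k(i)_{\geq 1-k}\rightarrow H^k(i)\rightarrow H^k(i)_{\leq -k}\rightarrow 0.$$
By definition $\bigoplus_{i=k}^{2k-2}H^k(i)_{\geq 1-k}=M$, the middle term $\bigoplus_{i=k}^{2k-2}H^k(i)$ is projective, and shifting $\underline{T}=\bigoplus_{j=0}^{k-2}H^k(j)_{\leq 0}$ by $k$ and reindexing $i=j+k$ gives $\bigoplus_{i=k}^{2k-2}H^k(i)_{\leq -k}=\underline{T}(k)$, as wanted. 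For the second sequence I would observe that, for $i\geq k$, the summand $H^k(i)_{\geq 1-k}$ of $M$ is generated in its lowest degree $1-k$ by one copy of $\Lambda^\circ$ (the degree-one generator $X$ of $R_k$ identifies successive graded pieces), so it admits the projective cover $H^k(k-1)\twoheadrightarrow H^k(i)_{\geq 1-k}$ with kernel $H^k(k-1)_{\geq k-i}$. A degreewise comparison gives $H^k(k-1)_{\geq k-i}\cong H^k(i-k)_{\leq 0}$, so summing over $i=k,\dots,2k-2$ and setting $j=i-k$ yields
$$0\rightarrow\bigoplus_{j=0}^{k-2}H^k(j)_{\leq 0}\rightarrow (H^k(k-1))^{\oplus(k-1)}\rightarrow M\rightarrow 0,$$
whose left-hand term is exactly $\underline{T}$.

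The main obstacle will not be exactness—this is immediate from the construction—but rather verifying that every map and every identification is a morphism of $\Gamma_k^{op}\otimes_K H^k$-modules, and not merely of underlying graded $\Lambda^\circ$-modules. Concretely, one must check that the componentwise maps assemble $\Gamma_k$-equivariantly, the left $\Gamma_k$-action linking the $\Lambda^\circ$-blocks in the prescribed triangular pattern, and that the isomorphisms $\bigoplus_i H^k(i)_{\leq -k}=\underline{T}(k)$ and $H^k(k-1)_{\geq k-i}\cong H^k(i-k)_{\leq 0}$ hold as bimodules. I expect this to be settled by tracking the action of the degree-raising generator $X$ together with the degree-preserving arrows of $Q^\circ$ on the $\Lambda^\circ$-blocks, exactly as in the matrix presentations of $\underline{T}$ and $M$ above; the same bookkeeping simultaneously confirms the bimodule identifications used in part (a).
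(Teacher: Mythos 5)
Your proposal is correct and follows essentially the same route as the paper: part (a) and the first sequence in (b) are exactly the arguments the paper imports from Yamaura's Lemma 6.3 and Lemma 6.4(1) (freeness of $\underline{T}$ as a left $\Gamma_k$-module so that the derived tensor collapses, plus direct sums of the truncation sequences), and your explicit projective covers $H^k(k-1)\twoheadrightarrow H^k(i)_{\geq 1-k}$ with kernels identified as $H^k(i-k)_{\leq 0}$ spell out the second sequence that the paper constructs and declares ``routine to check.'' The $\Gamma_k$-equivariance issue you flag at the end is precisely the verification the paper also treats as routine, so nothing essential is missing.
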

\begin{proof}
Similar to the proof of \cite[Lemma 6.3 and Lemma 6.4 (1)]{Ya}, we can prove (a) and get the first exact sequence in (b). So we only need to prove the existence of the second exact sequence in (b).

It is easy to see that it is an exact sequence in $\mod^\Z(\Lambda_k)$.
Similar to \eqref{eqn: T 1}, we can write $(\Lambda_k(k-1))^{\oplus (k-1) }$ in a matrix form, and the left $\Gamma_k$-module structure of $(\Lambda_k(k-1))^{\oplus (k-1) }$ is induced by the matrix multiplication.
%Then $(\Lambda_k(k-1))^{\oplus (k-1) }$ is a $(\Gamma_k^{op}\otimes \Lambda_k)$-module.
Together with the left $\Gamma_k$-module structures of $\underline{T}$ and $M$, it is routine to check that
$$0\rightarrow \underline{T}\rightarrow (\Lambda_k(k-1))^{\oplus (k-1) } \rightarrow M\rightarrow0$$
is an exact sequence in $\mod^\Z(\Gamma_k^{op}\otimes \Lambda_k)$.
\end{proof}

Since each short exact sequence in $\mod^\Z(\Gamma_k^{op}\otimes \Lambda_k)$ gives a triangle in $\cd^b(\mod^\Z(\Gamma_k^{op}\otimes_K \Lambda_k ))$, we have
the following two triangles in $\cd^b(\mod^\Z(\Gamma_k^{op}\otimes_K \Lambda_k))$:
$$\bigoplus_{i=k}^{2k-2}\Lambda_k(i)\rightarrow \underline{T}(k)\rightarrow \Sigma M\otimes_{\Gamma_k}^\L \underline{T} \rightarrow \Sigma(\bigoplus_{i=k}^{2k-2} \Lambda_k(i))$$
and
$$\Sigma( (\Lambda_k(k-1))^{\oplus (k-1) })\rightarrow \Sigma M\rightarrow \Sigma^2\underline{T}\rightarrow \Sigma^2((\Lambda_k(k-1))^{\oplus (k-1) }),$$
by noting that $M\otimes_{\Gamma_k}^\L \underline{T}\cong M$ in $\cd^b(\mod^\Z(\Gamma_k^{op}\otimes_K \Lambda_k))$.
Combining them, we get the following commutative diagram by octahedral axiom:
\[\xymatrix{ \bigoplus_{i=k}^{2k-2} \Lambda_k(i)\ar[r] \ar@{=}[d]& V\ar[r] \ar[d] &   \Sigma( (\Lambda_k(k-1))^{\oplus (k-1) })\otimes_{\Gamma_k}^\L \underline{T} \ar[r]\ar[d]& \Sigma(\bigoplus_{i=k}^{2k-2} \Lambda_k(i))\ar@{=}[d]  \\
\bigoplus_{i=k}^{2k-2} \Lambda_k(i)\ar[r]  &\underline{T}(k) \ar[r]\ar[d] & \Sigma M\otimes_{\Gamma_k}^\L \underline{T}\ar[r] \ar[d] & \Sigma(\bigoplus_{i=k}^{2k-2} \Lambda_k(i))\\
& \Sigma^2\underline{T}\otimes_{\Gamma_k}^\L \underline{T}\ar@{=}[r] \ar[d] & \Sigma^2\underline{T}\otimes_{\Gamma_k}^\L \underline{T}\ar[d]&\\
&\Sigma V \ar[r] & \Sigma^2( (\Lambda_k(k-1))^{\oplus (k-1) })\otimes_{\Gamma_k}^\L \underline{T}.&
  }\]
Then the second column  gives the following triangle in $\cd^b(\mod^\Z(\Gamma_k^{op}\otimes_K \Lambda_k))$:
\begin{equation} \label{equation triangle 1}
V\stackrel{\alpha}{\longrightarrow} \underline{T}(k)\stackrel{\beta}{\longrightarrow} \Sigma^2 \underline{T}\otimes_{\Gamma_k}^\L \underline{T}\stackrel{\gamma}{\longrightarrow}\Sigma V.
\end{equation}
Similar to Lemma \ref{lemma existence of some triangles} (a), we obtain that $(\Lambda_k(k-1))^{\oplus (k-1) }\otimes_{\Gamma_k}^\L \underline{T}\cong  (\Lambda_k(k-1))^{\oplus (k-1) }$ in  $\cd^b(\mod^\Z(\Gamma_k^{op}\otimes_K \Lambda_k))$. In particular, $(\Lambda_k(k-1))^{\oplus (k-1) }\otimes_{\Gamma_k}^\L \underline{T}\in \ck^b(\proj^\Z(\Lambda_k))$. From the triangle in the first row (viewed in $\cd^b(\mod^\Z(\Lambda_k))$), it follows that $V\in \ck^b(\proj^\Z(\Lambda_k))$ by viewing $\ck^b(\proj^\Z(\Lambda_k))$ to be the thick subcategory of $\cd^b(\mod^\Z(\Lambda_k))$.

\begin{proposition}\label{proposition degree k equals shift 2}
We have the following commutative diagram of triangulated equivalences up to an isomorphism of functors.
\[\xymatrix{   \cd^b(\mod(\Gamma_k)) \ar[rr]^{\Xi\quad\quad} \ar[d]^{-\otimes_{\Gamma_k}^\L \Sigma^2\underline{T}} && \cd_{sg}(\mod^\Z(\Lambda_k)) \ar[d]^{(k)} \\
\cd^b(\mod(\Gamma_k)) \ar[rr]^{\Xi\quad\quad} && \cd_{sg}(\mod^\Z(\Lambda_k)).
}\]
%where $$\phi:=-\otimes_{\Gamma_k}^\L \Sigma^2\underline{T}:  \cd^b(\mod(\Gamma_k))\rightarrow  \cd^b(\mod(\Gamma_k))$$
%and
%$$\Xi= \pi\circ (-\otimes_{\Gamma_k}^\L \underline{T} ): \cd^b(\mod(\Gamma_{k})) \rightarrow \cd_{sg}(\mod^\Z(\Lambda_k)).$$
In particular, $-\otimes_{\Gamma_k}^\L \Sigma^2\underline{T} \simeq\Sigma^2$ and there is an isomorphism of functors $ (k)\circ \Xi\stackrel{\sim}{\longrightarrow} \Xi\circ \Sigma^2$.
\end{proposition}
\begin{proof}
For convenience, denote by $\phi:=-\otimes_{\Gamma_k}^\L \Sigma^2\underline{T}$.
For $X\in \cd^b(\mod(\Gamma_k))$, we have a triangle
\begin{align}
\label{eqn: triangle 1}
\pi^\Z(X\otimes_{\Gamma_k}^\L V)\rightarrow \pi^\Z(X\otimes_{\Gamma_k}^\L \underline{T}(k)) \xrightarrow{\pi^\Z(X\otimes_{\Gamma_k}^\L \beta)} \pi^\Z(X\otimes_{\Gamma_k}^\L  \Sigma^2\underline{T}\otimes_{\Gamma_k}^\L \underline{T})\rightarrow\Sigma \pi^\Z(X\otimes_{\Gamma_k}^\L V)
\end{align}
in $\cd_{sg}(\mod^\Z(\Lambda_k))$ by applying $\pi^\Z\circ(X\otimes_{\Gamma_k}^\L-)$ to the triangle (\ref{equation triangle 1}). By definition, $\pi^\Z(X\otimes_{\Gamma_k}^\L \underline{T}(k))=\Xi(X)(k)$ and
$ \pi^\Z(X\otimes_{\Gamma_k}^\L \Sigma^2 \underline{T}\otimes_{\Gamma_k}^\L \underline{T})=\Xi \phi(X)$.
From the above analysis, we have $V\in \ck^b(\proj^\Z(\Lambda_k))$. Since $\gldim \Gamma_k<\infty$, we can assume that $X\in\ck^b(\proj(\Gamma_k))$. It follows that  $X\otimes_{\Gamma_k}^\L V\in \ck^b(\proj^\Z(\Lambda_k))$. From \eqref{eqn: triangle 1}, we have an isomorphism
$$ \pi^\Z(X\otimes_{\Gamma_k}^\L \beta): \Xi(X)(k)\rightarrow \Xi \phi(X)$$ in $\cd_{sg}(\mod^\Z(\Lambda_k))$.
Thus there exists an isomorphism of functors
\begin{align}
\label{eqn: equiv 1}
\pi^\Z(-\otimes_{\Gamma_k}^\L \beta): (k)\circ \Xi\rightarrow \Xi\circ \phi.
\end{align}

Recall that $\underline{T}$ is a $\Gamma^{op}_k\otimes_K \Gamma_k$-module with the $\Gamma$-action on both sides given by matrix multiplications, which is isomorphic to $\Gamma_k$ as a $\Gamma^{op}_k\otimes_K \Gamma_k$-module. So $\phi\simeq \Sigma^2$.

Together with \eqref{eqn: equiv 1}, we have the desired isomorphism of functors
$(k)\circ \Xi\stackrel{\sim}{\longrightarrow} \Xi\circ \Sigma^2$.
\end{proof}

\begin{corollary}\label{corolalary dense}
For any $n>0$, we have the following equivalences of additive categories:
$$\cd^b(\mod(\Gamma_k))/\Sigma^{2n}\simeq (\cd_{sg}(\mod^\Z(\Lambda_k)))/(kn)\simeq (\underline{\Gproj}^\Z(\Lambda_k))/(kn).$$
\end{corollary}
\begin{proof}
It follows from Proposition \ref{proposition degree k equals shift 2} that $(kn)\circ \Xi\stackrel{\sim}{\longrightarrow} \Xi\circ \Sigma^{2n}$ for any $n>0$. Then the equivalence $\Xi$ gives the desired equivalences of additive categories.
% $\cd^b(\mod(\Gamma_k))/\Sigma^2\simeq (\cd_{sg}(\mod^\Z \Gamma_k))/(k)\simeq (\underline{\Gproj}^\Z(\Gamma_k))/(k)$.
\end{proof}

Recall that $\mcr_{\Gamma_k,2n}= D(\Gamma_k,\Sigma^{2n}\Gamma_k)$ is the $2n$-root category of $\Gamma_k$. Then we have the following theorem.

\begin{theorem}
\label{main result}
For any integers $k>0$, $n>0$, there exists a triangulated equivalence
$$\widetilde{\Xi}: \mcr_{\Gamma_k,2n}\longrightarrow \cd_{sg} (\mod^{\Z/kn\Z}(\Lambda_k))$$
which makes the following diagram commutative:
\[\xymatrix{ \cd^b(\mod(\Gamma_k)) \ar[rr]^{\Xi\quad\quad}  \ar[d]^{\mathrm{nat.}} &&\cd_{sg}(\mod^\Z(\Lambda_k)) \ar[d]^{\underline{F_{kn}}} \ar[rr]^{\sim}&& \underline{\Gproj}^\Z (\Lambda_k) \ar[d]^{\underline{F_{kn}}} \\
\mcr_{\Gamma_k,2n} \ar[rr]^{\widetilde{\Xi}\quad\quad} && \cd_{sg}(\mod^{\Z/ kn\Z} (\Lambda_k)) \ar[rr]^{\sim} && \underline{\Gproj}^{\Z/kn\Z} (\Lambda_k).}\]
\end{theorem}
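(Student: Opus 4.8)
The plan is to deduce the theorem from the Iyama--Oppermann recognition theorem \cite[Theorem A.20]{IO} recalled above, specialised to the data $A=H^k$, $\Gamma=\Gamma_k$, $U=\underline{T}$, $N=\Sigma^{2n}\Gamma_k$ and $a=kn$. With these choices the functor $F=-\otimes_{\Gamma_k}^\L N$ on $D^b(\mod\Gamma_k)$ is the shift $\Sigma^{2n}$, so $D(\Gamma_k,N)=D(\Gamma_k,\Sigma^{2n}\Gamma_k)$ by definition, and the target of the cited theorem is exactly $D_{sg}(\mod^{\Z/kn\Z}H^k)$. Since $\underline{T}\cong\Gamma_k$ as a $\Gamma_k^{op}\otimes_K\Gamma_k$-module (used already in the proof of Proposition \ref{proposition degree k equals shift 2}), we have $N\otimes_{\Gamma_k}^\L\underline{T}\cong\Sigma^{2n}\underline{T}$. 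Hence the only hypothesis left to verify is the existence of a triangle
\[ P\longrightarrow\underline{T}(kn)\longrightarrow\Sigma^{2n}\underline{T}\longrightarrow\Sigma P \]
in $D^b(\mod^\Z(\Gamma_k^{op}\otimes_K H^k))$ with $P\in K^b(\proj^\Z H^k)$.

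I would produce this triangle by induction on $n$. The base case $n=1$ is triangle \eqref{equation triangle 1}, namely $V\to\underline{T}(k)\to\Sigma^2\underline{T}\otimes_{\Gamma_k}^\L\underline{T}\to\Sigma V$ with $V\in K^b(\proj^\Z H^k)$, rewritten via $\Sigma^2\underline{T}\otimes_{\Gamma_k}^\L\underline{T}\cong\Sigma^2\underline{T}$. For the inductive step, assume a triangle $P_{n-1}\to\underline{T}(k(n-1))\to\Sigma^{2(n-1)}\underline{T}\to\Sigma P_{n-1}$ with $P_{n-1}\in K^b(\proj^\Z H^k)$. Applying the grade-shift autoequivalence $(k(n-1))$ (which preserves $K^b(\proj^\Z H^k)$ and commutes with $\Sigma$) to the base triangle gives $V(k(n-1))\to\underline{T}(kn)\to\Sigma^2\underline{T}(k(n-1))\to\Sigma V(k(n-1))$, while applying $\Sigma^2$ to the inductive triangle gives $\Sigma^2 P_{n-1}\to\Sigma^2\underline{T}(k(n-1))\to\Sigma^{2n}\underline{T}\to\Sigma^3 P_{n-1}$. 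Composing the two middle maps $\underline{T}(kn)\to\Sigma^2\underline{T}(k(n-1))\to\Sigma^{2n}\underline{T}$ and invoking the octahedral axiom, the cone of the composite fits into a triangle with the two perfect cones $\Sigma V(k(n-1))$ and $\Sigma^3 P_{n-1}$, hence is itself in the thick subcategory $K^b(\proj^\Z H^k)$; a rotation yields the desired $P_n\to\underline{T}(kn)\to\Sigma^{2n}\underline{T}\to\Sigma P_n$. (The left $\Gamma_k^{op}$-action is irrelevant for perfectness over $H^k$, so all membership claims concern the underlying $H^k$-complexes.)

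Now \cite[Theorem A.20]{IO} supplies the triangulated functor $\widetilde{G}:D(\Gamma_k,\Sigma^{2n}\Gamma_k)\to D_{sg}(\mod^{\Z/kn\Z}H^k)$ together with commutativity of the left square, while the right-hand squares commute by naturality of $\underline{F_{kn}}$ with respect to the Buchweitz equivalences $D_{sg}(\mod^{G}H^k)\simeq\underline{\CM}^{G}(H^k)$. It remains to show $\widetilde{G}$ is an equivalence, which I would reduce to full faithfulness. On objects coming from $D^b(\mod\Gamma_k)$ the Hom-space in the orbit category is $\bigoplus_{i\in\Z}\Hom_{D^b(\mod\Gamma_k)}(X,\Sigma^{2ni}Y)$, whereas on the target the singularity-category analogue of the formula $\Hom_A(F_a X,F_a Y)_{\bar0}=\bigoplus_{i}\Hom_A(X,Y(ia))_0$ gives $\bigoplus_{i\in\Z}\Hom_{D_{sg}(\mod^\Z H^k)}(G(X),G(Y)(kni))$. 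Since $G$ is an equivalence (Lemma \ref{lemma tilting object }) and the isomorphism $(k)\circ G\simeq G\circ\Sigma^2$ of Proposition \ref{proposition degree k equals shift 2} iterates to $(kni)\circ G\simeq G\circ\Sigma^{2ni}$ for all $i\in\Z$, each summand satisfies $\Hom_{D_{sg}(\mod^\Z H^k)}(G(X),G(Y)(kni))\cong\Hom_{D_{sg}(\mod^\Z H^k)}(G(X),G(\Sigma^{2ni}Y))\cong\Hom_{D^b(\mod\Gamma_k)}(X,\Sigma^{2ni}Y)$, and these are precisely the maps induced by $\widetilde{G}$. Thus $\widetilde{G}$ is fully faithful on the orbit category; a standard five-lemma argument over these generators extends full faithfulness to all of the thick closure $D(\Gamma_k,\Sigma^{2n}\Gamma_k)$. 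Being fully faithful, $\widetilde{G}$ has thick essential image, which contains the essential image of $\underline{F_{kn}}$ (using density of $G$ and the left square); since $\thick(\underline{F_{kn}})=D_{sg}(\mod^{\Z/kn\Z}H^k)$, the functor $\widetilde{G}$ is dense, hence an equivalence.

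I expect the main obstacle to be the full-faithfulness step. Two points need care: first, establishing the singularity-category orbit formula for $\underline{F_{kn}}$ as the lift of the module-level computation, so that the target Hom-spaces really split as the displayed direct sum; and second, propagating the termwise Hom-isomorphism from the generating objects of the orbit category to the whole triangulated hull $D(\Gamma_k,\Sigma^{2n}\Gamma_k)$, which requires Keller's identification of the morphism spaces in the DG orbit category and its triangulated hull. By contrast, the construction of the perfect triangle is routine once the base case \eqref{equation triangle 1} and the functor identity of Proposition \ref{proposition degree k equals shift 2} are available.
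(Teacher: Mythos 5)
Your proposal is correct and takes essentially the same route as the paper: the paper's proof is a citation of the proof of \cite[Theorem 6.2]{Ya}, which proceeds exactly as you do --- verify the perfect-cone triangle hypothesis of \cite[Theorem A.20]{IO} starting from triangle (\ref{equation triangle 1}) and the bimodule identification $\underline{T}\cong\Gamma_k$ of Proposition \ref{proposition degree k equals shift 2}, invoke that theorem to obtain $\widetilde{G}$ and the commutative diagram, and conclude that $\widetilde{G}$ is an equivalence because $G$ is one (Lemma \ref{lemma tilting object }) and $\thick(\Im \underline{F_{kn}})=D_{sg}(\mod^{\Z/kn\Z}H^k)$. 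Your two elaborations --- the induction on $n$ manufacturing the triangle for $a=kn$ out of the case $n=1$ via the octahedral axiom, and the explicit full-faithfulness/density argument replacing the ``moreover'' clauses of the Iyama--Oppermann theorem that the paper's quoted version suppresses --- are precisely the details the citation leaves implicit, and both are carried out soundly.
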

\begin{proof}
By using Proposition \ref{proposition degree k equals shift 2}, the proof is similar to \cite[Theorem 6.2]{Ya}, for convenience, we give it here.
By applying Theorem \ref{theorem IO} and Proposition \ref{proposition degree k equals shift 2}, we get that there exists a triangulated functor $\widetilde{\Xi}: \mcr_{\Gamma_k,2n}\rightarrow \cd_{sg} (\mod^{\Z/kn\Z}(\Lambda_k))$ such that the above diagram is commutative. By definition, $\mcr_{\Gamma_k,2n}$ is generated by $\cd^b(\mod(\Gamma_k))/\Sigma^{2n}$, and Lemma \ref{lemma thick subcategory of singularity categories} shows that $\cd_{sg}(\mod^{\Z/ kn\Z} (\Lambda_k))$ is generated by $(\cd_{sg}(\mod^\Z(\Lambda_k)))/(kn)$. Note that the restriction of $\widetilde{\Xi}$ to $\cd^b(\mod(\Gamma_k))/\Sigma^{2n}$ gives the equivalence of additive categories
$$\cd^b(\mod(\Gamma_k))/\Sigma^{2n}\simeq (\cd_{sg}(\mod^\Z(\Lambda_k)))/(kn)$$
in Corollary \ref{corolalary dense}. Therefore, $\widetilde{\Xi}$ is a triangulated equivalence.
\end{proof}

We have the following corollary directly.
\begin{corollary}
\label{cor: 1}
For any integer $k>0$, there exists a triangulated equivalence
$$\widetilde{\Xi}: \mcr_{\Gamma_k}\longrightarrow \cd_{sg} (\mod^{\Z/k\Z}(\Lambda_k))$$
which makes the following diagram commutative:
\[\xymatrix{ \cd^b(\mod(\Gamma_k)) \ar[rr]^{\Xi\quad\quad}  \ar[d]^{\mathrm{nat.}} &&\cd_{sg}(\mod^\Z(\Lambda_k)) \ar[d]^{\underline{F_k}} \ar[rr]^{\sim}&& \underline{\Gproj}^\Z(\Lambda_k) \ar[d]^{\underline{F_k}} \\
\mcr_{\Gamma_k} \ar[rr]^{\widetilde{\Xi}\quad\quad} && \cd_{sg}(\mod^{\Z/ k\Z} (\Lambda_k)) \ar[rr]^{\sim} && \underline{\Gproj}^{\Z/k\Z}(\Lambda_k). }\]
\end{corollary}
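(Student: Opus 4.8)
The plan is to deduce this corollary directly from Theorem~\ref{main result} by specializing the auxiliary parameter to $n=1$ and then invoking the definition of the root category given in Section~2. First I would recall that, by definition, the root category of $\Gamma_k$ is the derived-orbit category with respect to the square of the shift, namely $\mcr_{\Gamma_k}=D(\Gamma_k,\Sigma^2\Gamma_k)$. Setting $n=1$ in Theorem~\ref{main result}, the bimodule parameter $\Sigma^{2n}\Gamma_k$ becomes $\Sigma^2\Gamma_k$, so the source derived-orbit category $D(\Gamma_k,\Sigma^{2n}\Gamma_k)$ is literally $\mcr_{\Gamma_k}$; simultaneously $kn=k$, so the target $D_{sg}(\mod^{\Z/kn\Z}H^k)$ becomes $D_{sg}(\mod^{\Z/k\Z}H^k)$ and the vertical functor $\underline{F_{kn}}$ becomes $\underline{F_k}$. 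Under these identifications the triangle equivalence $\widetilde{G}$ produced by Theorem~\ref{main result} is exactly the asserted equivalence $\mcr_{\Gamma_k}\to D_{sg}(\mod^{\Z/k\Z}H^k)$, and the left two columns of its commutative diagram specialize verbatim to the corresponding part of the diagram in the statement.

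Next I would supply the right-hand square. Here I would invoke the Buchweitz--Happel equivalences $D_{sg}(\mod^\Z H^k)\simeq\underline{\CM}^\Z H^k$ and $D_{sg}(\mod^{\Z/k\Z}H^k)\simeq\underline{\CM}^{\Z/k\Z}H^k$ recorded earlier for positively graded Gorenstein algebras. Their compatibility with the two vertical copies of $\underline{F_k}$ is immediate, since $F_k$ preserves projective objects as well as (graded) Cohen--Macaulay modules, and hence induces the same functor on both models of the singularity category; this is precisely the point flagged just before Lemma~\ref{lemma thick subcategory of derived categories}, where $\underline{F_a}$ is defined simultaneously on $D_{sg}(\mod^\Z A)$ and on $\underline{\CM}^\Z A$ without ambiguity. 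Pasting this square onto the diagram coming from Theorem~\ref{main result} then yields the full commutative diagram demanded by the corollary.

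I expect essentially no genuine obstacle, because all of the substance is already carried by Theorem~\ref{main result}, which in turn rests on the tilting object of Lemma~\ref{lemma tilting object }, the key triangle~(\ref{equation triangle 1}), the identification $(k)\circ G\simeq G\circ\Sigma^2$ of Proposition~\ref{proposition degree k equals shift 2}, and the general descent machinery of \cite[Theorem A.20]{IO}. The one point meriting a line of care is to confirm that for $n=1$ the triangulated hull $\thick_{D(\ca/F)}(H^0(\ca/F))$ appearing in the definition of $D(\Gamma_k,\Sigma^2\Gamma_k)$ is indeed what the statement calls $\mcr_{\Gamma_k}$; but this is nothing more than unwinding the definition of the root category, so the corollary follows directly.

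Finally, I would remark that the exponent $\Z/kn\Z$ written in the displayed statement should be read with $n=1$, i.e.\ as $\Z/k\Z$, in agreement with the functor $\underline{F_k}$ and the category $D_{sg}(\mod^{\Z/k\Z}H^k)$ appearing in the accompanying diagram; with this understanding the equivalence $\widetilde G$ is exactly the $n=1$ instance of the one constructed in Theorem~\ref{main result}.
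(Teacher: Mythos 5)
Your proposal is correct and matches the paper's approach exactly: the paper derives this corollary "directly" from Theorem~\ref{main result} by specializing to $n=1$, where $D(\Gamma_k,\Sigma^{2}\Gamma_k)=\mcr_{\Gamma_k}$ by the definition of the root category and $kn=k$ makes $\underline{F_{kn}}=\underline{F_k}$. Your additional observations — that the right-hand square comes from the Buchweitz--Happel equivalences already built into the theorem's diagram, and that the $\Z/kn\Z$ in the displayed statement is a typo to be read as $\Z/k\Z$ — are both accurate.
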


\subsection{Representations of algebras over the algebra of dual numbers}
Recall that an algebra $\Lambda$ is called {\em piecewise hereditary} if there exists a
hereditary abelian category $\ch$ such that the bounded derived categories $\cd^b(\mod(\Lambda))$ and $\cd^b(\ch)$ are
equivalent as triangulated categories, see \cite{HRS}. The following corollary is a generalization of \cite[Theorem 1]{RZ}.

\begin{theorem}
\label{thm: main 2}
We have a triangulated equivalence: $$\cd_{sg}(\mod^{\Z/n\Z}(\Lambda_2))\simeq \mcr_{\Lambda,n},\mbox{ for any }n>0.$$
Furthermore, if $\Lambda$ is piecewise hereditary, then
$$\cd_{sg}(\mod^{\Z/n\Z}(\Lambda_2))\simeq \cd^b(\mod(\Lambda))/\Sigma^n,\mbox{ for any }n>0.$$
\end{theorem}
\begin{proof}
For $k=2$, we get that $\underline{T}=\Lambda$, and $\Gamma_2=\Lambda$. Lemma \ref{lemma existence of some triangles} gives the exact sequence in $\mod^\Z((\Lambda)^{op}\otimes_K \Lambda_2)$:
 $$0\rightarrow \Lambda=\underline{T}\rightarrow \Lambda_2(1) \rightarrow \underline{T}(1)=\Lambda(1)\rightarrow0.$$
Then there is a triangle
$$ \Lambda_2(1)\rightarrow \underline{T}(1)\rightarrow \Sigma\underline{T}\rightarrow \Sigma\Lambda_2(1)$$
in $\cd^b(\mod^\Z(\Gamma_2^{op}\otimes_K \Lambda_2))$.
Similar to the proof of Proposition \ref{proposition degree k equals shift 2},
there exists a commutative diagram
\[\xymatrix{   \cd^b(\mod(\Lambda)) \ar[rr]^{\Xi\quad\quad} \ar[d]^{\Sigma} && \cd_{sg}(\mod^\Z(\Lambda_2)) \ar[d]^{(1)} \\
\cd^b(\mod(\Lambda)) \ar[rr]^{\Xi\quad\quad} && \cd_{sg}(\mod^\Z(\Lambda_2)),
}\]
i.e., $\Xi\circ\Sigma=(1)\circ \Xi$. Furthermore, for any $n>0$, $\Xi\circ\Sigma^n=(n)\circ \Xi$. Also similar to the proofs of Corollary \ref{corolalary dense} and Theorem \ref{main result}, we obtain that $\cd^b(\mod(\Lambda))/\Sigma^n\simeq \cd_{sg}(\mod^\Z(\Lambda_2))/(n)$ and there exists a triangulated equivalence
$$\widetilde{\Xi}: \mcr_{\Lambda,n}\longrightarrow \cd_{sg}(\mod^{\Z/n\Z}(\Lambda_2)),$$
for any $n>0$.

For the second assertion, if $\Lambda$ is piecewise hereditary, then the derived-orbit category $\mcr_{\Lambda,n}=D(\Lambda,\Sigma^{n}\Lambda)$ is the triangulated hull of $\cd^b(\mod(\Lambda))/\Sigma^n$ in $\cd(\cc_{\dg}^b(\proj \Lambda)/\Sigma^n)$. In this case, by the theorem in \cite[\S4]{Ke}, we get that $\cd^b(\mod(\Lambda))/\Sigma^n$ is already a triangulated category, which implies that $\mcr_{\Lambda,n}$ is triangulated equivalent to $\cd^b(\mod(\Lambda))/\Sigma^n$.
%In particular, for $n=1$, we have $\mod^{\Z/1\Z }\Lambda_2\simeq \mod(\Lambda_2)$, and then $\cd_{sg}(\mod\Lambda_2)\simeq \cd^b(\mod(\Lambda))/\Sigma$.
\end{proof}

\begin{corollary}[cf. \cite{RZ}]
\label{cor: RZ}
If $\Lambda$ is piecewise hereditary, we have $\underline{\Gproj}(\Lambda_2)\simeq\cd_{sg}(\mod(\Lambda_2))\simeq \cd^b(\mod(\Lambda))/\Sigma$.
\end{corollary}
\begin{proof}
It follows from Theorem \ref{thm: main 2} by noting that $\mod^{\Z/1\Z}(\Lambda_2)\simeq\mod(\Lambda_2)$.
\end{proof}

\subsection{For $\Gamma_k$ piecewise hereditary}
In this subsection, we describe $\cd_{sg}(\Lambda_k)$  for the case when $\Gamma_k$ is piecewise hereditary.

Recall the triangulated equivalence $\Xi= \pi^\Z\circ (-\otimes_{\Gamma_k}^\L \underline{T} )$ in \eqref{dfn: Xi}. Note that the grade shift functor $(1): \mod^\Z(\Lambda_k)\rightarrow \mod^\Z(\Lambda_k)$ gives a triangulated equivalence $(1):\cd_{sg}(\mod^\Z(\Lambda_k))\rightarrow \cd_{sg}(\mod^\Z(\Lambda_k))$. Then it induces a triangulated equivalence $\Psi: \cd^b(\mod(\Gamma_k))\rightarrow  \cd^b(\mod(\Gamma_k))$ which makes the following diagram commutative:
\[\xymatrix{   \cd^b(\mod(\Gamma_k)) \ar[rr]^{\Xi\quad\quad} \ar[d]^\Psi && \cd_{sg}(\mod^\Z(\Lambda_k)) \ar[d]^{(1)} \\
\cd^b(\mod(\Gamma_k)) \ar[rr]^{\Xi\quad\quad} && \cd_{sg}(\mod^\Z(\Lambda_k)).
}\]
In particular, $\Psi^k\simeq \Sigma^2$ by Proposition \ref{proposition degree k equals shift 2}.

\begin{lemma}
\label{lem: triangulated orbit}
Retain the notations as above. If $\Gamma_k$ is piecewise hereditary, then $\cd^b(\mod(\Gamma_k))/\Psi$ is a triangulated orbit category \`a la Keller \cite{Ke}.
\end{lemma}
\begin{proof}
We make the following observations.

First, since $\Gamma_k$ is piecewise hereditary, by definition, there is a hereditary abelian $K$-category $\ch$ such that
$\cd^b(\mod(\Gamma_k)) \stackrel{\sim}{\longrightarrow} \cd^b(\ch)$. We identify $\cd^b(\mod(\Gamma_k))$ with $\cd^b(\ch)$ in the following.

Second, for any indecomposable object $Y$ of $\cd^b(\ch)$, it follows from $\Psi^k\simeq \Sigma^2$ that only finitely many $\Psi^i(Y)$, $i\in\Z$, lie in $\ch$.

Finally, for the $\Psi$-orbit $\co_V$ of each indecomposable object $V$ in $\cd^b(\ch)$, by using $\Psi^k\simeq \Sigma^2$ again, there exists some indecomposable object $Y$ in $\ch$ such that $Y\in \co_V$ or $\Sigma Y\in \co_V$.

The assertion follows from these observations, by the theorem in \cite[\S4]{Ke}.
\end{proof}

\begin{proposition}
\label{prop: for piecewise hereditary}
If $\Gamma_k$ is piecewise hereditary, then we have the following triangulated equivalences
$$\cd^b(\mod(\Gamma_k))/\Psi\simeq \cd_{sg}(\mod(\Lambda_k))\simeq \underline{\Gproj}(\Lambda_k).$$
\end{proposition}
\begin{proof}
Similar to the proof of Corollary \ref{corolalary dense}, we have the following equivalences of additive categories
$\cd^b(\mod(\Gamma_k))/\Psi\simeq \cd_{sg}(\mod^\Z(\Lambda_k))/(1)$.
It follows from Lemma \ref{lem: triangulated orbit} that $\cd^b(\mod(\Gamma_k))/\Psi$ is a triangulated category. Then the orbit category $\cd_{sg}(\mod^\Z(\Lambda_k))/(1)$ is also a triangulated category. By Lemma \ref{lemma thick subcategory of singularity categories}, we get that $\cd_{sg}(\mod^\Z(\Lambda_k))/(1)\simeq \cd_{sg}(\mod(\Lambda_k))$ by noting that
$\mod^{\Z/1\Z}(\Lambda_k)\simeq \mod(\Lambda_k)$. It follows from $\cd^b(\mod(\Gamma_k))/\Psi\simeq \cd_{sg}(\mod^\Z(\Lambda_k))/(1)$ that the functor $\Xi$ induces the triangulated equivalence $\cd^b(\mod(\Gamma_k))/\Psi\simeq \cd_{sg}(\mod(\Lambda_k))$.
\end{proof}

\section{The Cohen-Macaulay type of $\Lambda_k$}

\subsection{CM-finite and graded CM-finite}
An algebra is of \emph{finite Cohen-Macaulay type}, or simply, \emph{CM-finite}, if there are only finitely many non-isomorphic indecomposable finitely generated Gorenstein projecitve modules. Otherwise, the algebra is called to be of \emph{infinite Cohen-Macaulay type}, or simply, \emph{CM-infinite}, see \cite{Be2,LZ}.
%Clearly, $A$ is CM-finite if and only if there is a finitely generated module $E$ such that $\Gproj(A)=\add E$. In this way, $E$ is called to be a \emph{Gorenstein projective generator}.

Let us return to consider group graded algebras.
For any group $G$, a finite-dimensional $G$-graded $K$-algebra $A$ is of {\em finite ($G$-)graded Cohen-Macaulay type} (or {\em $G$-graded CM-finite}) if there are only finitely many non-isomorphic indecomposable finitely generated Gorenstein projective modules up to the shifts of degree.

It is a well-known fact $A$ is CM-finite if and only if there are only finitely many iso-classes of indecomposable objects in $\underline{\Gproj}(A)$. Similar result also holds for $G$-graded algebras.
%In this subsection, we prove that for any $G$-graded algebra $A$, $A$ is $G$-graded CM-finite if and only if it is CM-finite.
%Before that, we recall the definition of smash product.
%For any group $G$, and any $G$-graded algebra $A$, we construct an algebra, which is called the smash product of $A$ with $G$ as follows.

\begin{lemma}
\label{theorem equivalent graded CM classification to the ordinary}
A finite-dimensional $\Z$-graded $K$-algebra $A$ is graded CM-finite if $A$ is CM-finite.
\end{lemma}
\begin{proof}
From Lemma \ref{lemma forgetful functor 2}, we have the forgetful functor $F:\Gproj^\Z(A)\rightarrow \Gproj(A)$. Obviously, $\Im F=\Gproj^\Z(A)/(1)$.
Since $A$ is CM-finite, there are only finitely many indecomposable objects in $\Gproj^\Z(A)/(1)$ up to isomorphisms. Then $A$ is graded CM-finite by definition.
\end{proof}

Recall that every $\Z$-graded algebra can be viewed as $\Z/a\Z$-graded algebra. %
Then we have the following corollary directly.
\begin{corollary}\label{theorem equivalent graded CM classification to the ordinary 2}
Let $A$ be a finite-dimensional $\Z$-graded algebra. If $A$ is CM-finite, then $A$ is $\Z/a\Z$-graded CM-finite for any $a\geq 0$.
\end{corollary}

\subsection{Derived tubular algebras}

In the following, we classify $\Lambda_k$ up to Cohen-Macaulay representation type. Before that, we need recall some results about derived tubular algebras.

A {\em tubular algebra} is a tubular
extension of a tame concealed algebra of extension type $(2, 2, 2, 2)$, $(3, 3, 3)$, $(2, 4, 4)$,
or $(2, 3, 6)$ in the sense of \cite[page 230]{Rin}. Note that tubular algebras are piecewise hereditary \cite{GL}. %, and the opposite algebra of a tubular algebra is also a tubular algebra.

We say that an algebra $A$ is {\em derived tubular} if it is
derived equivalent to a tubular algebra. It is reasonable to speak of the tubular type of a derived tubular algebra by the following result proved by D. Happel and C. M. Ringel  \cite{HR}: two tubular algebras are derived equivalent if and only if they have the same tubular type. H. Meltzer \cite{M} showed that each derived tubular algebra can be transformed by a finite sequence of APR-tilts and APR-cotilts to a
canonical algebra of tubular type. %The opposite algebra of a derived tubular algebra is also a derived tubular algebra.

\begin{lemma}\label{lemma derived tubular algebras}
Let $\Lambda=KQ$ be a finite-dimensional hereditary algebra.
\begin{itemize}
\item[(i)] If $Q$ is of type $\D_4$, then $T_2(\Lambda)$ is a derived tubular algebra of type $(3,3,3)$;
\item[(ii)] If $Q$ is of type $\A_3$, then $T_3(\Lambda)$ is a derived tubular algebra of type $(2,4,4)$;
\item[(iii)] If $Q$ is of type $\A_5$, then $T_2(\Lambda)$ is a derived tubular algebra of type $(2,3,6)$;
\item[(iv)] If $Q$ is of type $\A_2$, then $T_5(\Lambda)$ is a derived tubular algebra of type $(2,3,6)$.
\end{itemize}
\end{lemma}
\begin{proof}
The assertions (ii)--(iv) follow from \cite[Theorem B]{KLM}.

For (i), from \cite[Theorem 2.1]{Ric3}, without loss of generality, we can assume that the bound quiver of $T_2(\Lambda)$ is as Figure 1 shows. Then we do APR-tilt at the vertex $7$, APR-cotilts at the vertices $4$, $1$, $3$ successively to obtain the algebra $B$ as Figure 2 shows. Then $T_2(\Lambda)$ is a derived tubular algebra since $B$ is a tubular algebra, see e.g. \cite{LP}.

\begin{center}\setlength{\unitlength}{0.6mm}
 \begin{picture}(110,50)(0,0)
 \put(-30,0){\begin{picture}(60,50)

\put(0,18){\vector(0,-1){16}}
\put(20,18){\vector(0,-1){16}}
\put(40,18){\vector(0,-1){16}}
\put(38,20){\vector(-1,0){16}}
\put(18,20){\vector(-1,0){16}}
\put(38,0){\vector(-1,0){16}}
\put(18,0){\vector(-1,0){16}}
\put(30,28){\vector(0,-1){16}}
\put(28.5,28.5){\vector(-1,-1){7}}
\put(28.5,8.5){\vector(-1,-1){7}}
\put(-1,-1){\tiny$7$}
\put(19,-1){\tiny$6$}
\put(39,-1){\tiny$8$}

\put(-1,18){$^3$}
\put(19,18){$^2$}
\put(39,18){$^4$}

\put(29,28){$^1$}
\put(29,8){$_5$}

\qbezier[12](18,18)(10,10)(2,2)
\qbezier[12](38,18)(30,10)(22,2)
\qbezier[12](29,28)(25,15)(21,2)

\put(-40,-28){Figure 1. The quiver of $T_2(\Lambda)$ for $\Lambda$ a}
\put(-12,-36){hereditary algebra of type $\D_4$.}

\end{picture}}

\put(60,-10){\begin{picture}(70,50)

%\put(0,30){$\T_{(3,3,3)}$:}
\put(40,10){\tiny $8$}
\put(40,30){\tiny $2$}
\put(60,20){\tiny $4$}
\put(60,30){\tiny $7$}
\put(80,30){\tiny $6$}
\put(40,40){\tiny $3$}
\put(60,40){\tiny $1$}
\put(40,50){\tiny $5$}

\put(59,20){\vector(-2,-1){16}}
\put(79,30){\vector(-2,-1){16}}
\put(79,31){\vector(-1,0){16}}
\put(59,31){\vector(-1,0){16}}

\put(59,42){\vector(-2,1){16}}
\put(79,32){\vector(-2,1){16}}
\put(59,22){\vector(-2,1){16}}
\put(59,32){\vector(-2,1){16}}

\put(59,40){\vector(-2,-1){16}}

\put(48,17){\tiny$\gamma_3$}

\put(48,24){\tiny$\beta_3$}
\put(48,32){\tiny$\beta_2$}
\put(51,39){\tiny$\beta_1$}
\put(45,40){\tiny$\gamma_2$}

\put(48,48){\tiny$\gamma_1$}

\put(69,37){\tiny$\alpha_1$}

\put(69,32){\tiny$\alpha_2$}

\put(69,23){\tiny$\alpha_3$}

\put(40,0){$\beta_1\alpha_1+\beta_2\alpha_2=0$,}
\put(40,-8){$\beta_1\alpha_1+\beta_3\alpha_3=0$;}

\put(10,-25){Figure 2. The bound quiver of $B$.}
\end{picture}}

\end{picture}
\vspace{1.5cm}

\end{center}

\end{proof}

In particular, for any derived tubular algebra $A$, $\cd^b(\mod(A))$ has infinitely many non-isomorphic indecomposable objects up to shifts of complexes, see \cite{HR}.
 %appearing in the Lemma \ref{lemma derived tubular algebras} are of tame representation type.

\subsection{CM-finiteness of $\Lambda_k$}

%\begin{lemma}
%For any $k>0$, if $T_{k-1}(\Lambda)$ is piecewise hereditary, then
%$$\mcr_{T_{k-1}(\Lambda)}\simeq \cd^b(\mod  T_{k-1}(\Lambda))/\Sigma^2\simeq \cd_{sg}(\mod^{\Z/k\Z}(\Lambda_k)).$$
%\end{lemma}
%\begin{proof}
%Since $T_{k-1}(\Lambda)$ is derived equivalent to a hereditary algebra, similar to the proof of Theorem \ref{thm: main 2}, we have $\cd^b(\mod  T_{k-1}(\Lambda))/\Sigma^2\simeq\mcr_{T_{k-1}(\Lambda)}$ as triangulated categories. Then the lemma follows from Corollary \ref{cor: 1}.
%\end{proof}

First, we give another proof of the following result.

\begin{lemma}[\cite{RS,Sim}, see also \cite{Sim2,Sim3}]
\label{lem: RSSIM}
If $\Lambda$ is a hereditary algebra of type $\A_2$, then $\Lambda_k=\Lambda\otimes_K R_k$ is CM-finite if and only if $k<6$.
\end{lemma}
\begin{proof}
For $k=1$, easily, $\Lambda_1=\Lambda$ is a hereditary algebra of type $\A_2$, which implies $\Lambda_1$ is CM-free.

For $k=2$, we get that $T_1(\Lambda)=\Lambda$ is a hereditary algebra of type $\A_2$. We obtain that $\Lambda_2$ is CM-finite since $\Lambda$ is representation-finite and $\underline{\Gproj}(\Lambda_2)\simeq \cd^b(\mod(\Lambda))/\Sigma$ by Corollary \ref{cor: RZ}.

For $k=3$, easily, $T_2(\Lambda)=KQ/I$ where $Q$ is the quiver as the following diagram shows:
\[\xymatrix{\circ\ar[r]^{\alpha_1} \ar[d]_{\beta_1}& \circ\ar[d]^{\alpha_2} \\
\circ\ar[r]_{\beta_2} & \circ,}\]
 and $I=\langle\alpha_1\alpha_2-\beta_1\beta_2\rangle$.
Then $T_2(\Lambda)$ is a tilted algebra of type $\D_4$. It follows from Proposition \ref{prop: for piecewise hereditary} that $\underline{\Gproj}(\Lambda_3)\simeq \cd^b(\mod(T_2(\Lambda)))/\Psi$. Note that $\cd^b(\mod(T_2(\Lambda)))$ has only finitely many non-isomorphic indecomposable objects up to the shifts of complexes. Since $\Psi^k\simeq \Sigma^2$, it is easy to see that $\cd^b(\mod(T_2(\Lambda)))/\Psi$ has only fintely many non-isomorphic indecomposable objects. It follows that
$\Lambda_3$ is CM-finite.

For $k=4,5$, it is similar to the case $k=3$, only need to note that $T_3(\Lambda)$ and $T_4(\Lambda)$ are tilted algebras of type $\E_6$ and $\E_8$ respectively.

Since $\Lambda$ is of type $\A_2$, Lemma \ref{lemma derived tubular algebras} yields that $T_{5}(\Lambda)$ is a derived tubular algebra of type $(2,3,6)$. Similar to the proof of Lemma \ref{lem: triangulated orbit}, we get that $\cd^b(\mod  T_{5}(\Lambda))/\Sigma^2$ is already a triangulated category. Then
$\mcr_{T_{5}(\Lambda)}\simeq\cd^b(\mod  T_{5}(\Lambda))/\Sigma^2$.
For $\Lambda_6$, we get that $\underline{\Gproj}^{\Z/6\Z}(\Lambda_6)\simeq \mcr_{T_{5}(\Lambda)}\simeq \cd^b(\mod  T_{5}(\Lambda))/\Sigma^2$ by Corollary \ref{cor: 1}. Since $T_{5}(\Lambda)$ is a derived tubular algebra of type $(2,3,6)$, from the structure of its derived category described in \cite{HR}, we have that there are infinitely many non-isomorphic indecomposable objects in $\cd^b(\mod(T_{5}(\Lambda)))$ up to the shifts of complexes. Then $\Lambda_6$ is $\Z/6\Z$-graded CM-infinite, and is CM-infinite by Corollary \ref{theorem equivalent graded CM classification to the ordinary 2}.

For $\Lambda_k$ with $k>6$, from \cite[Proposition 1]{LS2}, we know that $T_{k-1}(\Lambda)$ is a wild algebra. Since $\Ind (\mod(T_{k-1}(\Lambda)))$ is a subset of $\Ind (\cd^b(\mod (T_{k-1}(\Lambda)))/\Sigma^2)$ and $\underline{\Gproj}^{\Z/k\Z}(\Lambda_k)\simeq \mcr_{T_{k-1}(\Lambda)}$ by Corollary \ref{cor: 1}, it is easy to see that
$\Lambda_k$ is a $\Z/k\Z$-graded CM-infinite algebra. From Corollary \ref{theorem equivalent graded CM classification to the ordinary 2}, we get that $\Lambda_k$ is CM-infinite for $k>6$.
\end{proof}

In the above corollary, it is easy to see that the number $s(k)$ of non-isomorphic indecomposable objects in $\Gproj(\Lambda_k)$ is given by the formula
$$s(k)=2+2(k-1)\frac{6}{6-k},$$
see \cite{RS}.

\begin{lemma}[see e.g. \cite{BGS,Vo}]\label{lemma derived finite category}
For a finite-dimensional algebra $B$ over $k$, if $\cd^b(\mod(B))$ has only finitely many non-isomorphic indecomposable objects up to the shift of complexes, then there exists a hereditary algebra $H$ of finite representation type such that $\cd^b(\mod(B))\simeq \cd^b(\mod(H))$.
\end{lemma}

\begin{theorem}\label{proposition CM finite}
Let $\Lambda$ be a finite-dimensional indecomposable algebra with finite global dimension. Then for any integer $k>0$, $\Lambda_k=\Lambda\otimes R_k$ is CM-finite if and only if it belongs to one of the following cases:
\begin{itemize}
\item[(i)] $k=1$;
\item[(ii)] $k=2$, $\Lambda$ is derived equivalent to a hereditary algebra of type $\A,\D$, or $\E$;
\item[(iii)] $k=3$, $\Lambda$ is derived equivalent to a hereditary algebra of type $\A_1,\A_2,\A_3$ or $\A_4$;
\item[(iv)] $k=4$ or $5$, $\Lambda$ is a hereditary algebra of type $\A_1,\A_2$;
\item[(v)] $k\geq6$, $\Lambda$ is the a hereditary algebra of type $\A_1$.
\end{itemize}
\end{theorem}
\begin{proof}
``If part'': if $\Lambda$ is derived equivalent to a hereditary algebra $H$, then
\cite[Theorem 2.1]{Ric3} shows that $T_{k-1}(\Lambda)$ is derived equivalent to $T_{k-1}(H)$. Without loss of generality, we assume that $\Lambda$ is a hereditary algebra.

It is easy to see that the algebras $T_{k-1}(\Lambda)$ appearing in (i)-(v) are either hereditary algebras or tilted algebras of Dynkin type. Then $\cd^b(\mod(T_{k-1}(\Lambda)))$ has only fintely many non-isomorphic indecomposable objects up to the shift of complexes. From $\underline{\Gproj} (\Lambda_k)\simeq \cd^b(\mod(T_{k-1}(\Lambda)))/\Psi$ in Proposition \ref{prop: for piecewise hereditary}, we obtain that $\Lambda_k$ is CM-finite in each (i)-(v) similar to the proof of Lemma \ref{lem: RSSIM}.

``Only if part'': $\Lambda_k$ is CM-finite. Corollary \ref{theorem equivalent graded CM classification to the ordinary 2} shows that
$\Lambda_k$ is $\Z/k\Z$-graded CM-finite. Together with $\underline{\Gproj}^{\Z/k\Z} (\Lambda_k)\simeq \cd^b(\mod(T_{k-1}(\Lambda)))/\Sigma^2$ by Corollary \ref{cor: 1}, it is easy to see that
$\cd^b(\mod(T_{k-1}(\Lambda)))$ has only finitely many non-isomorphic indecomposable objects up to the shifts of complexes. Note that $\Lambda$ is naturally a quotient algebra of $T_{k-1}(\Lambda)$, and there is an embedding $\cd^b(\mod(\Lambda))\rightarrow \cd^b(\mod(T_{k-1}(\Lambda)))$. So $\cd^b(\mod(\Lambda))$ has only finitely many non-isomorphic indecomposable objects up to the shifts of complexes, together with Lemma \ref{lemma derived finite category}, $\Lambda$ is derived equivalent to a hereditary algebra $H$ of finite representation type. We also assume that $\Lambda$ is a hereditary algebra.
Then we prove the result case by case.

(i) for $k=1$, it is obvious that $\Lambda_1$ is always CM-finite.

(ii) for $k=2$, $\underline{\Gproj}(\Lambda_k)\simeq \cd^b(\mod(\Lambda))/\Sigma$, so $\Lambda_k$ is CM-finite only if $\Lambda$ is of Dynkin type $\A,\D$, or $\E$.

(iii) for $k=3$, if $\Lambda$ is of type $\A_n$ ($n\geq5$), $\D$ or $\E$, then $T_2(\Lambda)$ is of infinite representation type by \cite[Section 5]{LS1}, which implies that $\cd^b(\mod(T_2(\Lambda)))$ has infinitely many non-isomorphic indecomposable objects up to the shifts of complexes. Similar to the proof of Lemma \ref{lem: RSSIM}, $\Lambda_k$ is CM-infinite in this case.

(iv) for $k=4$ and $5$, if the quiver of $\Lambda$ contains a subquiver of type $\A_3$, from Lemma \ref{lemma derived tubular algebras} (ii), it is easy to see that $\cd^b(\mod(T_{k-1}(\Lambda)))$ has infinitely many non-isomorphic indecomposable objects up to the shifts of complexes. The remaining proof is similar to (iii), hence is omitted here.

(v) for $k\geq6$, if the quiver of $\Lambda$ contains a subquiver of type $\A_2$, then $\cd^b(\mod(T_{k-1}(\Lambda)))$ has infinitely many non-isomorphic indecomposable objects up to the shifts of complexes by Lemma \ref{lemma derived tubular algebras} (iv). The remaining proof is similar to (iii), hence is omitted here.
\end{proof}

\begin{corollary}
For any integer $k>0$, $\Lambda_k=\Lambda\otimes R_k$ is CM-finite if and only if $\Lambda_k$ is $\Z$-graded CM-finite.
\end{corollary}
\begin{proof}
From lemma \ref{theorem equivalent graded CM classification to the ordinary}, we only need prove the ``if part''.

If $\Lambda_k$ is $\Z$-graded CM-finite, then $\cd_{sg}(\mod^\Z(\Lambda_k))$ has only finitely many non-isomorphic indecomposable objects up to the shifts of degree. By Proposition \ref{lemma tilting object } and Proposition \ref{proposition degree k equals shift 2}, we have the triangulated equivalence
$\Xi: \cd^b(\mod(T_{k-1}(\Lambda)))\stackrel{\sim}{\longrightarrow} \cd_{sg}(\mod^\Z(\Lambda_k))$ such that $\Xi\circ \Sigma^2\simeq (k)\circ \Xi$. It follows that $\cd^b(\mod(T_{k-1}(\Lambda)))$ has only finitely many non-isomorphic indecomposable objects up to the shifts of complexes. Then there exists a hereditary algebra $H$ of finite representation type such that $\cd^b(\mod(T_{k-1}(\Lambda)))\simeq \cd^b(\mod (H))$ by Lemma \ref{lemma derived finite category}. It follows from Theorem \ref{proposition CM finite} and its proof that $\Lambda_k$ is CM-finite.
\end{proof}

\section{Examples}

In this section, we describe the Auslander-Reiten quiver of $\Gproj(\Lambda_k)$ for some $\Lambda_k$ of finite Cohen-Macaulay type, mainly under the condition that $\Lambda$ is an indecomposable hereditary algebra, i.e., $\Lambda=KQ$, where $Q$ is a connected finite quiver. In this case, for the algebras $\Lambda_k$ appearing in Theorem \ref{proposition CM finite}, cases (i) and (v) are trivial. For case (ii) $k=2$, the Auslander-Reiten quiver of $\Gproj (\Lambda_2)$ is described in \cite{RZ}. For case (iii) $k=3,4,5$, when $\Lambda$ is of type $\A_2$, its Auslander-Reiten quiver is described in \cite{RS}. So we only need to describe the Auslander-Reiten quiver for $k=3$, and $\Lambda$ is hereditary of type $\A_3$ or $\A_4$.

\subsection{} Let $A=KQ/I$ with the bound quiver $(Q,I)$ as Figure 3 shows, where $I$ is the ideal of $KQ$ generated by $\varepsilon_1^3,\varepsilon_2^3,\varepsilon_3^3$, $\beta\varepsilon_2-\varepsilon_3\beta$ and $\alpha\varepsilon_1-\varepsilon_2\alpha$. Then $A\cong \Lambda_3=\Lambda\otimes_K K[X]/(X^3)$ with for some hereditary algebra $\Lambda$ of type $\A_3$.
\begin{center}\setlength{\unitlength}{0.7mm}
 \begin{picture}(50,13)(0,10)
\put(0,-2){$1$}
\put(18,0){\vector(-1,0){14}}
\put(10,-4){$\alpha$}
\put(20,-2){$2$}

\put(40,-2){$3$}
\put(38,0){\vector(-1,0){14}}

\put(30,-5){$\beta$}

\qbezier(-1,1)(-3,3)(-2,5.5)
\qbezier(-2,5.5)(1,9)(4,5.5)
\qbezier(4,5.5)(5,3)(3,1)
\put(3.1,1.4){\vector(-1,-1){0.3}}

\qbezier(19,1)(17,3)(18,5.5)
\qbezier(18,5.5)(21,9)(24,5.5)
\qbezier(24,5.5)(25,3)(23,1)
\put(23.1,1.4){\vector(-1,-1){0.3}}

\qbezier(39,1)(37,3)(38,5.5)
\qbezier(38,5.5)(41,9)(44,5.5)
\qbezier(44,5.5)(45,3)(43,1)
\put(43.1,1.4){\vector(-1,-1){0.3}}

\put(1,10){$\varepsilon_1$}
\put(21,10){$\varepsilon_2$}
\put(41,10){$\varepsilon_3$}
\put(-40,-14){Figure 3. The quiver for $\Lambda_3$ with $\Lambda$ hereditary of type $\A_3$}
\end{picture}
\vspace{1.9cm}
\end{center}
The Auslander-Reiten quiver of $\Lambda_3$ is displayed in Figure 4.
As vertices we have the graded dimension vectors (arising from the obvious $\Z$-covering of $\Lambda_3$) of the indecomposable $\Lambda_3$-modules. The modules on the leftmost column of the figure has to be identified with the corresponding module on the rightmost one. The projective $\Lambda_3$-modules are marked with a solid frame.

\begin{center}\setlength{\unitlength}{0.55mm}
 \begin{picture}(180,150)(0,0)
\put(0,20){\tiny $\setlength{\arraycolsep}{1pt}\renewcommand{\arraystretch}{0.5}{\begin{array}{ccc} 0&0&0\\ 1&0&0\\ 1&1&0\\ 1&1&0  \end{array} }$ }

\put(0,60){\tiny $\setlength{\arraycolsep}{1pt}\renewcommand{\arraystretch}{0.5}{\begin{array}{ccc} 0&0&0\\ 1&0&0\\ 2&1&0\\ 1&1&1  \end{array} }$ }

\put(0,110){\tiny $\setlength{\arraycolsep}{1pt}\renewcommand{\arraystretch}{0.5}{\begin{array}{ccc} 0&0&0\\ 0&0&0\\ 1&1&1\\ 1&1&1  \end{array} }$ }

\put(20,0){\tiny $\setlength{\arraycolsep}{1pt}\renewcommand{\arraystretch}{0.5}{\begin{array}{ccc} 0&0&0\\ 1&1&0\\ 1&1&0\\ 1&1&0  \end{array} }$ }

\put(20,40){\tiny $\setlength{\arraycolsep}{1pt}\renewcommand{\arraystretch}{0.5}{\begin{array}{ccc} 0&0&0\\ 1&0&0\\ 1&1&0\\ 1&1&1  \end{array} }$ }

\put(20,70){\tiny $\setlength{\arraycolsep}{1pt}\renewcommand{\arraystretch}{0.5}{\begin{array}{ccc} 0&0&0\\ 0&0&0\\ 1&1&0\\ 0&0&0  \end{array} }$ }

\put(20,90){\tiny $\setlength{\arraycolsep}{1pt}\renewcommand{\arraystretch}{0.5}{\begin{array}{ccc} 0&0&0\\ 1&0&0\\ 2&1&1\\ 1&1&1  \end{array} }$ }

\put(40,20){\tiny $\setlength{\arraycolsep}{1pt}\renewcommand{\arraystretch}{0.5}{\begin{array}{ccc} 0&0&0\\ 1&1&0\\ 1&1&0\\ 1&1&1  \end{array} }$ }

\put(40,60){\tiny $\setlength{\arraycolsep}{1pt}\renewcommand{\arraystretch}{0.5}{\begin{array}{ccc} 0&0&0\\ 1&0&0\\ 2&2&1\\ 1&1&1  \end{array} }$ }

\put(40,110){\tiny $\setlength{\arraycolsep}{1pt}\renewcommand{\arraystretch}{0.5}{\begin{array}{ccc} 0&0&0\\ 1&0&0\\ 1&0&0\\ 0&0&0  \end{array} }$ }

\put(60,40){\tiny $\setlength{\arraycolsep}{1pt}\renewcommand{\arraystretch}{0.5}{\begin{array}{ccc} 0&0&0\\ 1&1&0\\ 2&2&1\\ 1&1&1  \end{array} }$ }

\put(60,70){\tiny $\setlength{\arraycolsep}{1pt}\renewcommand{\arraystretch}{0.5}{\begin{array}{ccc} 0&0&0\\ 1&0&0\\ 1&1&1\\ 1&1&1  \end{array} }$ }

\put(60,90){\tiny $\setlength{\arraycolsep}{1pt}\renewcommand{\arraystretch}{0.5}{\begin{array}{ccc} 0&0&0\\ 1&0&0\\ 1&1&0\\ 0&0&0  \end{array} }$ }
\put(60,130){\tiny $\setlength{\arraycolsep}{1pt}\renewcommand{\arraystretch}{0.5}{\begin{array}{ccc} 1&0&0\\ 1&0&0\\ 1&0&0\\ 0&0&0  \end{array} }$ }

\put(80,20){\tiny $\setlength{\arraycolsep}{1pt}\renewcommand{\arraystretch}{0.5}{\begin{array}{ccc} 0&0&0\\ 0&0&0\\ 1&1&1\\ 0&0&0  \end{array} }$ }

\put(80,60){\tiny $\setlength{\arraycolsep}{1pt}\renewcommand{\arraystretch}{0.5}{\begin{array}{ccc} 0&0&0\\ 2&1&0\\ 2&2&1\\ 1&1&1  \end{array} }$ }

\put(80,110){\tiny $\setlength{\arraycolsep}{1pt}\renewcommand{\arraystretch}{0.5}{\begin{array}{ccc} 1&0&0\\ 1&0&0\\ 1&1&0\\ 0&0&0  \end{array} }$ }

\put(100,40){\tiny $\setlength{\arraycolsep}{1pt}\renewcommand{\arraystretch}{0.5}{\begin{array}{ccc} 0&0&0\\ 1&0&0\\ 1&1&1\\ 0&0&0  \end{array} }$ }

\put(100,70){\tiny $\setlength{\arraycolsep}{1pt}\renewcommand{\arraystretch}{0.5}{\begin{array}{ccc} 0&0&0\\ 1&1&0\\ 1&1&0\\ 0&0&0  \end{array} }$ }

\put(100,90){\tiny $\setlength{\arraycolsep}{1pt}\renewcommand{\arraystretch}{0.5}{\begin{array}{ccc} 1&0&0\\ 2&1&0\\ 2&2&1\\ 1&1&1  \end{array} }$ }

\put(120,20){\tiny $\setlength{\arraycolsep}{1pt}\renewcommand{\arraystretch}{0.5}{\begin{array}{ccc} 0&0&0\\ 1&0&0\\ 0&0&0\\ 0&0&0  \end{array} }$ }

\put(120,60){\tiny $\setlength{\arraycolsep}{1pt}\renewcommand{\arraystretch}{0.5}{\begin{array}{ccc} 1&0&0\\ 2&1&0\\ 2&2&1\\ 0&0&0  \end{array} }$ }

\put(120,110){\tiny $\setlength{\arraycolsep}{1pt}\renewcommand{\arraystretch}{0.5}{\begin{array}{ccc} 0&0&0\\ 1&1&0\\ 1&1&1\\ 1&1&1  \end{array} }$ }

\put(140,40){\tiny $\setlength{\arraycolsep}{1pt}\renewcommand{\arraystretch}{0.5}{\begin{array}{ccc} 1&0&0\\ 2&1&0\\ 1&1&0\\ 0&0&0  \end{array} }$ }

\put(140,70){\tiny $\setlength{\arraycolsep}{1pt}\renewcommand{\arraystretch}{0.5}{\begin{array}{ccc} 1&0&0\\ 1&0&0\\ 1&1&1\\ 0&0&0  \end{array} }$ }

\put(140,90){\tiny $\setlength{\arraycolsep}{1pt}\renewcommand{\arraystretch}{0.5}{\begin{array}{ccc}  0&0&0\\ 1&1&0\\ 1&1&1\\0&0&0  \end{array} }$ }
\put(140,130){\tiny $\setlength{\arraycolsep}{1pt}\renewcommand{\arraystretch}{0.5}{\begin{array}{ccc} 0&0&0\\ 1&1&1\\ 1&1&1\\ 1&1&1  \end{array} }$ }

\put(160,20){\tiny $\setlength{\arraycolsep}{1pt}\renewcommand{\arraystretch}{0.5}{\begin{array}{ccc}  1&0&0\\ 1&1&0\\ 1&1&0\\ 0&0&0 \end{array} }$ }

\put(160,60){\tiny $\setlength{\arraycolsep}{1pt}\renewcommand{\arraystretch}{0.5}{\begin{array}{ccc}1&0&0\\ 2&1&0\\ 1&1&1 \\ 0&0&0  \end{array} }$ }

\put(160,110){\tiny $\setlength{\arraycolsep}{1pt}\renewcommand{\arraystretch}{0.5}{\begin{array}{ccc}  0&0&0\\ 1&1&1\\ 1&1&1 \\0&0&0 \end{array} }$ }

\put(11,26){\vector(1,1){9}}
\put(11,16){\vector(1,-1){9}}

\put(11,63){\vector(2,1){9}}
\put(11,56){\vector(1,-1){9}}
\put(11,67){\vector(1,2){9}}

\put(11,106){\vector(1,-1){9}}

\put(51,26){\vector(1,1){9}}

\put(51,63){\vector(2,1){9}}
\put(51,56){\vector(1,-1){9}}
\put(51,67){\vector(1,2){9}}

\put(51,106){\vector(1,-1){9}}

\put(51,116){\vector(1,1){9}}

\put(91,26){\vector(1,1){9}}

\put(91,63){\vector(2,1){9}}
\put(91,56){\vector(1,-1){9}}
\put(91,67){\vector(1,2){9}}

\put(91,106){\vector(1,-1){9}}
\put(131,116){\vector(1,1){9}}
\put(131,26){\vector(1,1){9}}

\put(131,63){\vector(2,1){9}}
\put(131,56){\vector(1,-1){9}}
\put(131,67){\vector(1,2){9}}

\put(131,106){\vector(1,-1){9}}

\put(31,6){\vector(1,1){9}}

\put(31,36){\vector(1,-1){9}}

\put(31,46){\vector(1,1){9}}

\put(31,67){\vector(2,-1){9}}

\put(31,85){\vector(1,-2){9}}

\put(31,96){\vector(1,1){9}}

\put(31,6){\vector(1,1){9}}

\put(71,36){\vector(1,-1){9}}

\put(71,46){\vector(1,1){9}}

\put(71,67){\vector(2,-1){9}}

\put(71,85){\vector(1,-2){9}}

\put(71,96){\vector(1,1){9}}
\put(71,126){\vector(1,-1){9}}

\put(111,36){\vector(1,-1){9}}

\put(111,46){\vector(1,1){9}}

\put(111,67){\vector(2,-1){9}}

\put(111,85){\vector(1,-2){9}}

\put(111,96){\vector(1,1){9}}

\put(151,36){\vector(1,-1){9}}

\put(151,46){\vector(1,1){9}}

\put(151,67){\vector(2,-1){9}}

\put(151,85){\vector(1,-2){9}}

\put(151,96){\vector(1,1){9}}
\put(151,126){\vector(1,-1){9}}

\put(20,-7){\begin{picture}(10,20)
\put(0,0){\line(1,0){12}}
\put(0,0){\line(0,1){17}}
\put(12,0){\line(0,1){17}}
\put(0,17){\line(1,0){12}}
\end{picture}}

\put(60,123){\begin{picture}(10,20)
\put(0,0){\line(1,0){12}}
\put(0,0){\line(0,1){17}}
\put(12,0){\line(0,1){17}}
\put(0,17){\line(1,0){12}}
\end{picture}}

\put(140,123){\begin{picture}(10,20)
\put(0,0){\line(1,0){12}}
\put(0,0){\line(0,1){17}}
\put(12,0){\line(0,1){17}}
\put(0,17){\line(1,0){12}}
\end{picture}}

\dashline{3}(5,40)(20,40)
\put(5,40){\vector(-1,0){1}}
\dashline{3}(32,40)(59,40)
\put(32,40){\vector(-1,0){1}}

\dashline{3}(72,40)(99,40)
\put(72,40){\vector(-1,0){1}}

\dashline{3}(112,40)(139,40)
\put(112,40){\vector(-1,0){1}}

\dashline{3}(152,40)(167,40)
\put(152,40){\vector(-1,0){1}}

\dashline{3}(5,90)(20,90)
\put(5,90){\vector(-1,0){1}}
\dashline{3}(32,90)(59,90)
\put(32,90){\vector(-1,0){1}}

\dashline{3}(72,90)(99,90)
\put(72,90){\vector(-1,0){1}}

\dashline{3}(112,90)(139,90)
\put(112,90){\vector(-1,0){1}}

\dashline{3}(152,90)(167,90)
\put(152,90){\vector(-1,0){1}}

\dashline{3}(40,60)(12,60)
\put(12,60){\vector(-1,0){1}}

\dashline{3}(80,60)(52,60)
\put(52,60){\vector(-1,0){1}}

\dashline{3}(120,60)(92,60)
\put(92,60){\vector(-1,0){1}}

\dashline{3}(160,60)(132,60)
\put(132,60){\vector(-1,0){1}}

\dashline{3}(40,20)(12,20)
\put(12,20){\vector(-1,0){1}}

\dashline{3}(80,20)(52,20)
\put(52,20){\vector(-1,0){1}}

\dashline{3}(120,20)(92,20)
\put(92,20){\vector(-1,0){1}}

\dashline{3}(160,20)(132,20)
\put(132,20){\vector(-1,0){1}}

\dashline{3}(40,110)(12,110)
\put(12,110){\vector(-1,0){1}}

\dashline{3}(80,110)(52,110)
\put(52,110){\vector(-1,0){1}}

\dashline{3}(120,110)(92,110)
\put(92,110){\vector(-1,0){1}}

\dashline{3}(160,110)(132,110)
\put(132,110){\vector(-1,0){1}}

\put(-20,-20){Figure 4. Auslander-Reiten quiver for $\Gproj(\Lambda_3)$ with $\Lambda$ hereditary of type $\A_3$}
\end{picture}

\vspace{1.2cm}
\end{center}

\subsection{} Let $Q$ be the quiver as Figure 5 shows. Let $A=KQ/I$ be the algebra with $I$ the ideal of $KQ$ generated by the relations $\varepsilon_1^3,\varepsilon_2^3,\varepsilon_3^3,\varepsilon_4^3$, $\alpha\varepsilon_1- \varepsilon_2\alpha$, $\beta\varepsilon_2-\varepsilon_3\beta$  and $\gamma\varepsilon_3-\varepsilon_4\gamma$. Then $A\cong\Lambda_3=\Lambda\otimes_K K[X]/(X^3)$ for some hereditary algebra $\Lambda$ of type $\A_4$. The Auslander-Reiten quiver of $\Lambda_3$ is displayed in Figure 6.
The modules on the leftmost column of the figure has to be identified with the corresponding module on the rightmost one.

\begin{center}\setlength{\unitlength}{0.7mm}
 \begin{picture}(50,10)(0,10)
\put(0,-2){$1$}
\put(18,0){\vector(-1,0){14}}
\put(10,-4){$\alpha$}
\put(20,-2){$2$}

\put(40,-2){$3$}
\put(38,0){\vector(-1,0){14}}

\put(30,-5){$\beta$}

\put(50,-5){$\gamma$}

\put(60,-2){$4$}
\put(58,0){\vector(-1,0){14}}

\qbezier(59,1)(57,3)(58,5.5)
\qbezier(58,5.5)(61,9)(64,5.5)
\qbezier(64,5.5)(65,3)(63,1)
\put(63.1,1.4){\vector(-1,-1){0.3}}

\qbezier(-1,1)(-3,3)(-2,5.5)
\qbezier(-2,5.5)(1,9)(4,5.5)
\qbezier(4,5.5)(5,3)(3,1)
\put(3.1,1.4){\vector(-1,-1){0.3}}

\qbezier(19,1)(17,3)(18,5.5)
\qbezier(18,5.5)(21,9)(24,5.5)
\qbezier(24,5.5)(25,3)(23,1)
\put(23.1,1.4){\vector(-1,-1){0.3}}

\qbezier(39,1)(37,3)(38,5.5)
\qbezier(38,5.5)(41,9)(44,5.5)
\qbezier(44,5.5)(45,3)(43,1)
\put(43.1,1.4){\vector(-1,-1){0.3}}

\put(1,10){$\varepsilon_1$}
\put(21,10){$\varepsilon_2$}
\put(41,10){$\varepsilon_3$}
\put(61,10){$\varepsilon_4$}
\put(-40,-14){Figure 5. The quiver for $\Lambda_3$ with $\Lambda$ hereditary of type $\A_4$}

\end{picture}
\vspace{1.6cm}
\end{center}

\begin{landscape}

\begin{center}\setlength{\unitlength}{0.56mm}
 \begin{picture}(440,240)(20,-20)
\put(0,20){\tiny $\setlength{\arraycolsep}{1pt}\renewcommand{\arraystretch}{0.5}{\begin{array}{cccc} 0&0&0&0\\ 1&0&0&0\\ 1&0&0&0\\ 0&0&0&0  \end{array} }$ }

\put(40,20){\tiny $\setlength{\arraycolsep}{1pt}\renewcommand{\arraystretch}{0.5}{\begin{array}{cccc} 1&0&0&0\\ 1&0&0&0\\ 1&1&0&0\\ 0&0&0&0  \end{array} }$ }

\put(80,20){\tiny $\setlength{\arraycolsep}{1pt}\renewcommand{\arraystretch}{0.5}{\begin{array}{cccc} 0&0&0&0\\ 1&1&0&0\\ 1&1&1&0\\ 1&1&1&0  \end{array} }$ }

\put(120,20){\tiny $\setlength{\arraycolsep}{1pt}\renewcommand{\arraystretch}{0.5}{\begin{array}{cccc} 1&0&0&0\\ 1&1&1&0\\ 1&1&1&0\\ 1&1&1&1  \end{array} }$ }

\put(160,20){\tiny $\setlength{\arraycolsep}{1pt}\renewcommand{\arraystretch}{0.5}{\begin{array}{cccc} 0&0&0&0\\ 0&0&0&0\\ 1&1&1&1\\ 0&0&0&0  \end{array} }$ }

\put(200,20){\tiny $\setlength{\arraycolsep}{1pt}\renewcommand{\arraystretch}{0.5}{\begin{array}{cccc} 0&0&0&0\\ 1&0&0&0\\ 0&0&0&0\\ 0&0&0&0  \end{array} }$ }

\put(240,20){\tiny $\setlength{\arraycolsep}{1pt}\renewcommand{\arraystretch}{0.5}{\begin{array}{cccc} 1&0&0&0\\ 1&1&0&0\\ 1&1&0&0\\ 0&0&0&0  \end{array} }$ }

\put(280,20){\tiny $\setlength{\arraycolsep}{1pt}\renewcommand{\arraystretch}{0.5}{\begin{array}{cccc} 1&1&0&0\\ 1&1&0&0\\ 1&1&1&0\\ 0&0&0&0  \end{array} }$ }

\put(320,20){\tiny $\setlength{\arraycolsep}{1pt}\renewcommand{\arraystretch}{0.5}{\begin{array}{cccc} 0&0&0&0\\ 1&1&1&0\\ 1&1&1&1\\ 1&1&1&1  \end{array} }$ }

\put(360,20){\tiny $\setlength{\arraycolsep}{1pt}\renewcommand{\arraystretch}{0.5}{\begin{array}{cccc}0&0&0&0\\ 1&1&1&1\\ 1&1&1&1\\ 0&0&0&0  \end{array} }$ }

\put(400,20){\tiny $\setlength{\arraycolsep}{1pt}\renewcommand{\arraystretch}{0.5}{\begin{array}{cccc} 1&0&0&0\\ 1&0&0&0\\ 0&0&0&0\\ 0&0&0&0  \end{array} }$ }

\put(0,60){\tiny $\setlength{\arraycolsep}{1pt}\renewcommand{\arraystretch}{0.5}{\begin{array}{cccc} 0&0&0&0\\ 1&0&0&0\\ 2&2&1&1\\ 1&1&1&1  \end{array} }$ }

\put(40,60){\tiny $\setlength{\arraycolsep}{1pt}\renewcommand{\arraystretch}{0.5}{\begin{array}{cccc} 0&0&0&0\\ 2&1&0&0\\ 2&2&1&0\\ 1&1&1&0  \end{array} }$ }

\put(80,60){\tiny $\setlength{\arraycolsep}{1pt}\renewcommand{\arraystretch}{0.5}{\begin{array}{cccc} 1&0&0&0\\ 2&1&0&0\\ 2&2&1&0\\ 1&1&1&1  \end{array} }$ }

\put(120,60){\tiny $\setlength{\arraycolsep}{1pt}\renewcommand{\arraystretch}{0.5}{\begin{array}{cccc} 0&0&0&0\\ 1&1&0&0\\ 2&2&2&1\\ 1&1&1&1  \end{array} }$ }

\put(160,60){\tiny $\setlength{\arraycolsep}{1pt}\renewcommand{\arraystretch}{0.5}{\begin{array}{cccc} 0&0&0&0\\ 2&1&1&0\\ 2&2&2&1\\ 1&1&1&1  \end{array} }$ }

\put(200,60){\tiny $\setlength{\arraycolsep}{1pt}\renewcommand{\arraystretch}{0.5}{\begin{array}{cccc} 1&0&0&0\\ 2&1&0&0\\ 2&2&1&1\\ 0&0&0&0  \end{array} }$ }

\put(240,60){\tiny $\setlength{\arraycolsep}{1pt}\renewcommand{\arraystretch}{0.5}{\begin{array}{cccc} 1&0&0&0\\ 2&1&0&0\\ 1&1&1&0\\ 0&0&0&0  \end{array} }$ }

\put(280,60){\tiny $\setlength{\arraycolsep}{1pt}\renewcommand{\arraystretch}{0.5}{\begin{array}{cccc} 1&0&0&0\\ 2&2&  1&0\\ 2&2&2&1\\ 1&1&1&1  \end{array} }$ }

\put(320,60){\tiny $\setlength{\arraycolsep}{1pt}\renewcommand{\arraystretch}{0.5}{\begin{array}{cccc} 1&1&0&0\\ 2&2&1&0\\ 2&2&2&1\\ 0&0&0&0  \end{array} }$ }

\put(360,60){\tiny $\setlength{\arraycolsep}{1pt}\renewcommand{\arraystretch}{0.5}{\begin{array}{cccc}1&0&0&0\\ 2&1&1&0\\ 1&1&1&1\\ 0&0&0&0  \end{array} }$ }

\put(400,60){\tiny $\setlength{\arraycolsep}{1pt}\renewcommand{\arraystretch}{0.5}{\begin{array}{cccc} 1&0&0&0\\ 2&2&1&1\\ 1&1&1&1\\ 0&0&0&0  \end{array} }$ }

\put(0,100){\tiny $\setlength{\arraycolsep}{1pt}\renewcommand{\arraystretch}{0.5}{\begin{array}{cccc} 0&0&0&0\\ 2&1&0&0\\ 3&3&2&0\\ 2&2&2&1  \end{array} }$ }

\put(40,100){\tiny $\setlength{\arraycolsep}{1pt}\renewcommand{\arraystretch}{0.5}{\begin{array}{cccc} 0&0&0&0\\ 2&1&0&0\\ 3&3&2&1\\ 2&2&2&2  \end{array} }$ }

\put(80,100){\tiny $\setlength{\arraycolsep}{1pt}\renewcommand{\arraystretch}{0.5}{\begin{array}{cccc} 0&0&0&0\\ 2&1&0&0\\ 3&3&2&1\\ 1&1&1&1  \end{array} }$ }

\put(120,100){\tiny $\setlength{\arraycolsep}{1pt}\renewcommand{\arraystretch}{0.5}{\begin{array}{cccc} 1&0&0&0\\ 3&1&0&0\\ 3&3&2&1\\ 1&1&1&1  \end{array} }$ }

\put(160,100){\tiny $\setlength{\arraycolsep}{1pt}\renewcommand{\arraystretch}{0.5}{\begin{array}{cccc} 1&0&0&0\\ 3&2&0&0\\ 3&3&2&1\\ 1&1&1&1  \end{array} }$ }

\put(200,100){\tiny $\setlength{\arraycolsep}{1pt}\renewcommand{\arraystretch}{0.5}{\begin{array}{cccc} 1&0&0&0\\ 3&2&1&0\\ 3&3&3&1\\ 1&1&1&1  \end{array} }$ }

\put(240,100){\tiny $\setlength{\arraycolsep}{1pt}\renewcommand{\arraystretch}{0.5}{\begin{array}{cccc} 1&0&0&0\\ 3&2&1&0\\ 3&3&3&2\\ 1&1&1&1  \end{array} }$ }

\put(280,100){\tiny $\setlength{\arraycolsep}{1pt}\renewcommand{\arraystretch}{0.5}{\begin{array}{cccc} 1&0&0&0\\ 3&2&1&0\\ 2&2&2&1\\ 0&0&0&0  \end{array} }$ }

\put(320,100){\tiny $\setlength{\arraycolsep}{1pt}\renewcommand{\arraystretch}{0.5}{\begin{array}{cccc} 2&0&0&0\\ 3&2&1&0\\ 2&2&2&1\\ 0&0&0&0  \end{array} }$ }

\put(360,100){\tiny $\setlength{\arraycolsep}{1pt}\renewcommand{\arraystretch}{0.5}{\begin{array}{cccc}2&1&0&0\\ 3&3&1&0\\ 2&2&2&1\\ 0&0&0&0  \end{array} }$ }

\put(400,100){\tiny $\setlength{\arraycolsep}{1pt}\renewcommand{\arraystretch}{0.5}{\begin{array}{cccc} 2&1&0&0\\ 3&3&2&0\\ 2&2&2&1\\ 0&0&0&0  \end{array} }$ }

\put(0,150){\tiny $\setlength{\arraycolsep}{1pt}\renewcommand{\arraystretch}{0.5}{\begin{array}{cccc} 0&0&0&0\\ 1&1&0&0\\ 1&1&0&0\\ 1&1&1&1  \end{array} }$ }

\put(40,150){\tiny $\setlength{\arraycolsep}{1pt}\renewcommand{\arraystretch}{0.5}{\begin{array}{cccc} 0&0&0&0\\ 0&0&0&0\\ 1&1&1&0\\ 0&0&0&0  \end{array} }$ }

\put(80,150){\tiny $\setlength{\arraycolsep}{1pt}\renewcommand{\arraystretch}{0.5}{\begin{array}{cccc} 0&0&0&0\\ 1&0&0&0\\ 1&1&1&1\\ 1&1&1&1  \end{array} }$ }

\put(120,150){\tiny $\setlength{\arraycolsep}{1pt}\renewcommand{\arraystretch}{0.5}{\begin{array}{cccc} 0&0&0&0\\ 1&1&0&0\\ 1&1&0&0\\ 0&0&0&0  \end{array} }$ }

\put(160,150){\tiny $\setlength{\arraycolsep}{1pt}\renewcommand{\arraystretch}{0.5}{\begin{array}{cccc} 1&0&0&0\\ 1&0&0&0\\ 1&1&1&0\\ 0&0&0&0  \end{array} }$ }

\put(200,150){\tiny $\setlength{\arraycolsep}{1pt}\renewcommand{\arraystretch}{0.5}{\begin{array}{cccc} 0&0&0&0\\ 1&1&0&0\\ 1&1&1&1\\ 1&1&1&1 \end{array} }$ }

\put(240,150){\tiny $\setlength{\arraycolsep}{1pt}\renewcommand{\arraystretch}{0.5}{\begin{array}{cccc} 0&0&0&0\\ 1&1&1&0\\ 1&1&1&0\\ 0&0&0&0  \end{array} }$ }

\put(280,150){\tiny $\setlength{\arraycolsep}{1pt}\renewcommand{\arraystretch}{0.5}{\begin{array}{cccc} 1&0&0&0\\ 1&0&0&0\\ 1&1&1&1\\ 0&0&0&0  \end{array} }$ }

\put(320,150){\tiny $\setlength{\arraycolsep}{1pt}\renewcommand{\arraystretch}{0.5}{\begin{array}{cccc} 0&0&0&0\\ 1&1&0&0\\ 0&0&0&0\\ 0&0&0&0  \end{array} }$ }

\put(360,150){\tiny $\setlength{\arraycolsep}{1pt}\renewcommand{\arraystretch}{0.5}{\begin{array}{cccc}1&0&0&0\\ 1&1&1&0\\ 1&1&1&0\\ 0&0&0&0  \end{array} }$ }

\put(400,150){\tiny $\setlength{\arraycolsep}{1pt}\renewcommand{\arraystretch}{0.5}{\begin{array}{cccc} 1&1&0&0\\ 1&1&0&0\\ 1&1&1&1\\ 0&0&0&0  \end{array} }$ }

\put(20,40){\tiny $\setlength{\arraycolsep}{1pt}\renewcommand{\arraystretch}{0.5}{\begin{array}{cccc} 0&0&0&0\\ 1&0&0&0\\ 1&1&0&0\\ 0&0&0&0  \end{array} }$ }

\put(60,40){\tiny $\setlength{\arraycolsep}{1pt}\renewcommand{\arraystretch}{0.5}{\begin{array}{cccc} 1&0&0&0\\ 2&1&0&0\\ 2&2&1&0\\ 1&1&1&0  \end{array} }$ }

\put(100,40){\tiny $\setlength{\arraycolsep}{1pt}\renewcommand{\arraystretch}{0.5}{\begin{array}{cccc} 0&0&0&0\\ 1&1&0&0\\ 1&1&1&0\\ 1&1&1&1  \end{array} }$ }
\put(140,40){\tiny $\setlength{\arraycolsep}{1pt}\renewcommand{\arraystretch}{0.5}{\begin{array}{cccc} 0&0&0&0\\ 1&1&1&0\\ 2&2&2&1\\ 1&1&1&1  \end{array} }$ }

\put(180,40){\tiny $\setlength{\arraycolsep}{1pt}\renewcommand{\arraystretch}{0.5}{\begin{array}{cccc} 0&0&0&0\\ 1&0&0&0\\ 1&1&1&1\\ 0&0&0&0  \end{array} }$ }

\put(220,40){\tiny $\setlength{\arraycolsep}{1pt}\renewcommand{\arraystretch}{0.5}{\begin{array}{cccc} 1&0&0&0\\ 2&1&0&0\\ 1&1&0&0\\ 0&0&0&0  \end{array} }$ }

\put(260,40){\tiny $\setlength{\arraycolsep}{1pt}\renewcommand{\arraystretch}{0.5}{\begin{array}{cccc} 1&0&0&0\\ 1&1&0&0\\ 1&1&1&0\\ 0&0&0&0  \end{array} }$ }

\put(300,40){\tiny $\setlength{\arraycolsep}{1pt}\renewcommand{\arraystretch}{0.5}{\begin{array}{cccc} 1&1&0&0\\ 2&2&1&0\\ 2&2&2&1\\ 1&1&1&1  \end{array} }$ }

\put(340,40){\tiny $\setlength{\arraycolsep}{1pt}\renewcommand{\arraystretch}{0.5}{\begin{array}{cccc} 0&0&0&0\\ 1&1&1&0\\ 1&1&1&1\\ 0&0&0&0  \end{array} }$ }

\put(380,40){\tiny $\setlength{\arraycolsep}{1pt}\renewcommand{\arraystretch}{0.5}{\begin{array}{cccc} 1&0&0&0\\ 2&1&1&1\\ 1&1&1&1\\ 0&0&0&0  \end{array} }$ }

\put(20,80){\tiny $\setlength{\arraycolsep}{1pt}\renewcommand{\arraystretch}{0.5}{\begin{array}{cccc} 0&0&0&0\\ 2&1&0&0\\ 3&3&2&1\\ 2&2&2&1  \end{array} }$ }

\put(60,80){\tiny $\setlength{\arraycolsep}{1pt}\renewcommand{\arraystretch}{0.5}{\begin{array}{cccc} 0&0&0&0\\ 2&1&0&0\\ 2&2&1&0\\ 1&1&1&1  \end{array} }$ }

\put(100,80){\tiny $\setlength{\arraycolsep}{1pt}\renewcommand{\arraystretch}{0.5}{\begin{array}{cccc} 1&0&0&0\\ 2&1&0&0\\ 3&3&2&1\\ 1&1&1&1  \end{array} }$ }
\put(140,80){\tiny $\setlength{\arraycolsep}{1pt}\renewcommand{\arraystretch}{0.5}{\begin{array}{cccc} 0&0&0&0\\ 2&1&0&0\\ 2&2&2&1\\ 1&1&1&1  \end{array} }$ }

\put(180,80){\tiny $\setlength{\arraycolsep}{1pt}\renewcommand{\arraystretch}{0.5}{\begin{array}{cccc} 1&0&0&0\\ 3&2&1&0\\ 3&3&2&1\\  1&1&1&1   \end{array} }$ }

\put(220,80){\tiny $\setlength{\arraycolsep}{1pt}\renewcommand{\arraystretch}{0.5}{\begin{array}{cccc} 1&0&0&0\\ 2&1&0&0\\ 2&2&2&1\\ 0&0&0&0  \end{array} }$ }

\put(260,80){\tiny $\setlength{\arraycolsep}{1pt}\renewcommand{\arraystretch}{0.5}{\begin{array}{cccc} 1&0&0&0\\ 3&2&1&0\\ 2&2&2&1\\ 1&1&1&1  \end{array} }$ }

\put(300,80){\tiny $\setlength{\arraycolsep}{1pt}\renewcommand{\arraystretch}{0.5}{\begin{array}{cccc} 1&0&0&0\\ 2&2&1&0\\ 2&2&2&1\\ 0&0&0&0  \end{array} }$ }

\put(340,80){\tiny $\setlength{\arraycolsep}{1pt}\renewcommand{\arraystretch}{0.5}{\begin{array}{cccc} 2&1&0&0\\ 3&2&1&0\\ 2&2&2&1\\ 0&0&0&0  \end{array} }$ }

\put(380,80){\tiny $\setlength{\arraycolsep}{1pt}\renewcommand{\arraystretch}{0.5}{\begin{array}{cccc} 1&0&0&0\\ 2&2&1&0\\ 1&1&1&1\\ 0&0&0&0  \end{array} }$ }

\put(20,110){\tiny $\setlength{\arraycolsep}{1pt}\renewcommand{\arraystretch}{0.5}{\begin{array}{cccc} 0&0&0&0\\ 1&0&0&0\\ 1&1&1&0\\ 1&1&1&1  \end{array} }$ }

\put(60,110){\tiny $\setlength{\arraycolsep}{1pt}\renewcommand{\arraystretch}{0.5}{\begin{array}{cccc} 0&0&0&0\\ 1&1&0&0\\ 2&2&1&1\\ 1&1&1&1  \end{array} }$ }

\put(100,110){\tiny $\setlength{\arraycolsep}{1pt}\renewcommand{\arraystretch}{0.5}{\begin{array}{cccc} 0&0&0&0\\ 1&0&0&0\\ 1&1&1&0\\ 0&0&0&0  \end{array} }$ }

\put(140,110){\tiny $\setlength{\arraycolsep}{1pt}\renewcommand{\arraystretch}{0.5}{\begin{array}{cccc} 1&0&0&0\\ 2&1&0&0\\ 2&2&1&1\\  1&1&1&1   \end{array} }$ }

\put(180,110){\tiny $\setlength{\arraycolsep}{1pt}\renewcommand{\arraystretch}{0.5}{\begin{array}{cccc} 0&0&0&0\\ 1&1&0&0\\ 1&1&1&0\\ 0&0&0&0  \end{array} }$ }

\put(220,110){\tiny $\setlength{\arraycolsep}{1pt}\renewcommand{\arraystretch}{0.5}{\begin{array}{cccc} 1&0&0&0\\ 2&1&1&0\\ 2&2&2&1\\  1&1&1&1   \end{array} }$ }

\put(260,110){\tiny $\setlength{\arraycolsep}{1pt}\renewcommand{\arraystretch}{0.5}{\begin{array}{cccc} 0&0&0&0\\ 1&1&0&0\\ 1&1&1&1\\ 0&0&0&0  \end{array} }$ }

\put(300,110){\tiny $\setlength{\arraycolsep}{1pt}\renewcommand{\arraystretch}{0.5}{\begin{array}{cccc} 1&0&0&0\\ 2&1&1&0\\ 1&1&1&0\\ 0&0&0&0  \end{array} }$ }

\put(340,110){\tiny $\setlength{\arraycolsep}{1pt}\renewcommand{\arraystretch}{0.5}{\begin{array}{cccc} 1&0&0&0\\ 1&1&0&0\\ 1&1&1&1\\ 0&0&0&0  \end{array} }$ }

\put(380,110){\tiny $\setlength{\arraycolsep}{1pt}\renewcommand{\arraystretch}{0.5}{\begin{array}{cccc} 1&1&0&0\\ 2&2&1&0\\ 1&1&1&0\\ 0&0&0&0  \end{array} }$ }

\put(20,130){\tiny $\setlength{\arraycolsep}{1pt}\renewcommand{\arraystretch}{0.5}{\begin{array}{cccc} 0&0&0&0\\ 1&1&0&0\\ 2&2&1&0\\  1&1&1&1   \end{array} }$ }

\put(60,130){\tiny $\setlength{\arraycolsep}{1pt}\renewcommand{\arraystretch}{0.5}{\begin{array}{cccc} 0&0&0&0\\ 1&0&0&0\\ 2&2&1&0\\  1&1&1&1   \end{array} }$ }

\put(100,130){\tiny $\setlength{\arraycolsep}{1pt}\renewcommand{\arraystretch}{0.5}{\begin{array}{cccc} 0&0&0&0\\ 2&1&0&0\\ 2&2&1&1\\  1&1&1&1   \end{array} }$ }

\put(140,130){\tiny $\setlength{\arraycolsep}{1pt}\renewcommand{\arraystretch}{0.5}{\begin{array}{cccc} 1&0&0&0\\ 2&1&0&0\\ 2&2&1&0\\ 0&0&0&0  \end{array} }$ }

\put(180,130){\tiny $\setlength{\arraycolsep}{1pt}\renewcommand{\arraystretch}{0.5}{\begin{array}{cccc} 1&0&0&0\\ 2&1&0&0\\ 2&2&2&1\\  1&1&1&1  \end{array} }$ }

\put(220,130){\tiny $\setlength{\arraycolsep}{1pt}\renewcommand{\arraystretch}{0.5}{\begin{array}{cccc} 0&0&0&0\\ 2&2&1&0\\ 2&2&2&1\\  1&1&1&1   \end{array} }$ }

\put(260,130){\tiny $\setlength{\arraycolsep}{1pt}\renewcommand{\arraystretch}{0.5}{\begin{array}{cccc} 1&0&0&0\\ 2&1&1&0\\ 2&2&2&1\\ 0&0&0&0  \end{array} }$ }

\put(300,130){\tiny $\setlength{\arraycolsep}{1pt}\renewcommand{\arraystretch}{0.5}{\begin{array}{cccc} 1&0&0&0\\ 2&1&0&0\\ 1&1&1&1\\ 0&0&0&0  \end{array} }$ }

\put(340,130){\tiny $\setlength{\arraycolsep}{1pt}\renewcommand{\arraystretch}{0.5}{\begin{array}{cccc} 1&0&0&0\\ 2&2&1&0\\ 1&1&1&0\\ 0&0&0&0  \end{array} }$ }

\put(380,130){\tiny $\setlength{\arraycolsep}{1pt}\renewcommand{\arraystretch}{0.5}{\begin{array}{cccc} 2&1&0&0\\ 2&2&1&0\\ 2&2&2&1\\ 0&0&0&0  \end{array} }$ }

\put(20,0){\tiny $\setlength{\arraycolsep}{1pt}\renewcommand{\arraystretch}{0.5}{\begin{array}{cccc} 1&0&0&0\\ 1&0&0&0\\ 1&0&0&0\\ 0&0&0&0  \end{array} }$ }

\put(100,0){\tiny $\setlength{\arraycolsep}{1pt}\renewcommand{\arraystretch}{0.5}{\begin{array}{cccc} 0&0&0&0 \\1&1&1&0\\ 1&1&1&0\\ 1&1&1&0  \end{array} }$ }

\put(260,0){\tiny $\setlength{\arraycolsep}{1pt}\renewcommand{\arraystretch}{0.5}{\begin{array}{cccc} 1&1&0&0 \\1&1&0&0\\ 1&1&0&0\\0&0&0&0  \end{array} }$ }

\put(340,0){\tiny $\setlength{\arraycolsep}{1pt}\renewcommand{\arraystretch}{0.5}{\begin{array}{cccc} 0&0&0&0 \\1&1&1&1\\ 1&1&1&1\\1&1&1&1  \end{array} }$ }

\put(14,66){\vector(1,1){7}}
\put(14,56){\vector(1,-1){7}}

\put(14,103){\vector(2,1){7}}
\put(14,96){\vector(1,-1){7}}
\put(14,107){\vector(1,3){7}}

\put(14,146){\vector(1,-1){7}}

\put(14,26){\vector(1,1){7}}

\put(54,66){\vector(1,1){7}}
\put(54,56){\vector(1,-1){7}}

\put(54,103){\vector(2,1){7}}
\put(54,96){\vector(1,-1){7}}
\put(54,107){\vector(1,3){7}}

\put(54,146){\vector(1,-1){7}}

\put(54,26){\vector(1,1){7}}

\put(94,66){\vector(1,1){7}}
\put(94,56){\vector(1,-1){7}}

\put(94,103){\vector(2,1){7}}
\put(94,96){\vector(1,-1){7}}
\put(94,107){\vector(1,3){7}}

\put(94,146){\vector(1,-1){7}}

\put(94,26){\vector(1,1){7}}

\put(134,66){\vector(1,1){7}}
\put(134,56){\vector(1,-1){7}}

\put(134,103){\vector(2,1){7}}
\put(134,96){\vector(1,-1){7}}
\put(134,107){\vector(1,3){7}}

\put(134,146){\vector(1,-1){7}}

\put(134,26){\vector(1,1){7}}

\put(174,66){\vector(1,1){7}}
\put(174,56){\vector(1,-1){7}}

\put(174,103){\vector(2,1){7}}
\put(174,96){\vector(1,-1){7}}
\put(174,107){\vector(1,3){7}}

\put(174,146){\vector(1,-1){7}}

\put(174,26){\vector(1,1){7}}

\put(214,66){\vector(1,1){7}}
\put(214,56){\vector(1,-1){7}}

\put(214,103){\vector(2,1){7}}
\put(214,96){\vector(1,-1){7}}
\put(214,107){\vector(1,3){7}}

\put(214,146){\vector(1,-1){7}}

\put(214,26){\vector(1,1){7}}

\put(254,66){\vector(1,1){7}}
\put(254,56){\vector(1,-1){7}}

\put(254,103){\vector(2,1){7}}
\put(254,96){\vector(1,-1){7}}
\put(254,107){\vector(1,3){7}}

\put(254,146){\vector(1,-1){7}}

\put(254,26){\vector(1,1){7}}

\put(294,66){\vector(1,1){7}}
\put(294,56){\vector(1,-1){7}}

\put(294,103){\vector(2,1){7}}
\put(294,96){\vector(1,-1){7}}
\put(294,107){\vector(1,3){7}}

\put(294,146){\vector(1,-1){7}}

\put(294,26){\vector(1,1){7}}

\put(334,66){\vector(1,1){7}}
\put(334,56){\vector(1,-1){7}}

\put(334,103){\vector(2,1){7}}
\put(334,96){\vector(1,-1){7}}
\put(334,107){\vector(1,3){7}}

\put(334,146){\vector(1,-1){7}}

\put(334,26){\vector(1,1){7}}

\put(374,66){\vector(1,1){7}}
\put(374,56){\vector(1,-1){7}}

\put(374,103){\vector(2,1){7}}
\put(374,96){\vector(1,-1){7}}
\put(374,107){\vector(1,3){7}}

\put(374,146){\vector(1,-1){7}}

\put(374,26){\vector(1,1){7}}

\put(34,76){\vector(1,-1){7}}

\put(34,88){\vector(1,1){7}}

\put(34,107){\vector(2,-1){7}}

\put(33,127){\vector(1,-3){7}}

\put(34,138){\vector(1,1){7}}

\put(34,48){\vector(1,1){7}}
\put(34,36){\vector(1,-1){7}}

\put(74,76){\vector(1,-1){7}}

\put(74,88){\vector(1,1){7}}

\put(74,107){\vector(2,-1){7}}

\put(73,127){\vector(1,-3){7}}

\put(74,138){\vector(1,1){7}}

\put(74,48){\vector(1,1){7}}
\put(74,36){\vector(1,-1){7}}

\put(114,76){\vector(1,-1){7}}

\put(114,88){\vector(1,1){7}}

\put(114,107){\vector(2,-1){7}}

\put(113,127){\vector(1,-3){7}}

\put(114,138){\vector(1,1){7}}

\put(114,48){\vector(1,1){7}}
\put(114,36){\vector(1,-1){7}}

\put(154,76){\vector(1,-1){7}}

\put(154,88){\vector(1,1){7}}

\put(154,107){\vector(2,-1){7}}

\put(153,127){\vector(1,-3){7}}

\put(154,138){\vector(1,1){7}}

\put(154,48){\vector(1,1){7}}
\put(154,36){\vector(1,-1){7}}

\put(194,76){\vector(1,-1){7}}

\put(194,88){\vector(1,1){7}}

\put(194,107){\vector(2,-1){7}}

\put(193,127){\vector(1,-3){7}}

\put(194,138){\vector(1,1){7}}

\put(194,48){\vector(1,1){7}}
\put(194,36){\vector(1,-1){7}}

\put(234,76){\vector(1,-1){7}}

\put(234,88){\vector(1,1){7}}

\put(234,107){\vector(2,-1){7}}

\put(233,127){\vector(1,-3){7}}

\put(234,138){\vector(1,1){7}}

\put(234,48){\vector(1,1){7}}
\put(234,36){\vector(1,-1){7}}

\put(274,76){\vector(1,-1){7}}

\put(274,88){\vector(1,1){7}}

\put(274,107){\vector(2,-1){7}}

\put(273,127){\vector(1,-3){7}}

\put(274,138){\vector(1,1){7}}

\put(274,48){\vector(1,1){7}}
\put(274,36){\vector(1,-1){7}}

\put(314,76){\vector(1,-1){7}}

\put(314,88){\vector(1,1){7}}

\put(314,107){\vector(2,-1){7}}

\put(313,127){\vector(1,-3){7}}

\put(314,138){\vector(1,1){7}}

\put(314,48){\vector(1,1){7}}
\put(314,36){\vector(1,-1){7}}

\put(354,76){\vector(1,-1){7}}

\put(354,88){\vector(1,1){7}}

\put(354,107){\vector(2,-1){7}}

\put(353,127){\vector(1,-3){7}}

\put(354,138){\vector(1,1){7}}

\put(354,48){\vector(1,1){7}}
\put(354,36){\vector(1,-1){7}}

\put(394,76){\vector(1,-1){7}}

\put(394,88){\vector(1,1){7}}

\put(394,107){\vector(2,-1){7}}

\put(393,127){\vector(1,-3){7}}

\put(394,138){\vector(1,1){7}}

\put(394,48){\vector(1,1){7}}
\put(394,36){\vector(1,-1){7}}

\put(34,7){\vector(1,1){7}}
\put(14,15){\vector(1,-1){7}}

\put(114,7){\vector(1,1){7}}
\put(94,15){\vector(1,-1){7}}

\put(274,7){\vector(1,1){7}}
\put(254,15){\vector(1,-1){7}}

\put(354,7){\vector(1,1){7}}
\put(334,15){\vector(1,-1){7}}

\put(20,-7){\begin{picture}(10,20)
\put(0,0){\line(1,0){15}}
\put(0,0){\line(0,1){17}}
\put(15,0){\line(0,1){17}}
\put(0,17){\line(1,0){15}}
\end{picture}}

\put(100,-7){\begin{picture}(10,20)
\put(0,0){\line(1,0){15}}
\put(0,0){\line(0,1){17}}
\put(15,0){\line(0,1){17}}
\put(0,17){\line(1,0){15}}
\end{picture}}

\put(260,-7){\begin{picture}(10,20)
\put(0,0){\line(1,0){15}}
\put(0,0){\line(0,1){17}}
\put(15,0){\line(0,1){17}}
\put(0,17){\line(1,0){15}}
\end{picture}}

\put(340,-7){\begin{picture}(10,20)
\put(0,0){\line(1,0){15}}
\put(0,0){\line(0,1){17}}
\put(15,0){\line(0,1){17}}
\put(0,17){\line(1,0){15}}
\end{picture}}

\dashline{3}(76,80)(99,80)
\put(76,80){\vector(-1,0){1}}

\dashline{3}(36,80)(59,80)
\put(36,80){\vector(-1,0){1}}

\dashline{3}(116,80)(139,80)
\put(116,80){\vector(-1,0){1}}

\dashline{3}(156,80)(179,80)
\put(156,80){\vector(-1,0){1}}

\dashline{3}(196,80)(219,80)
\put(196,80){\vector(-1,0){1}}

\dashline{3}(236,80)(259,80)
\put(236,80){\vector(-1,0){1}}

\dashline{3}(276,80)(299,80)
\put(276,80){\vector(-1,0){1}}

\dashline{3}(316,80)(339,80)
\put(316,80){\vector(-1,0){1}}

\dashline{3}(356,80)(379,80)
\put(356,80){\vector(-1,0){1}}

\dashline{3}(76,130)(99,130)
\put(76,130){\vector(-1,0){1}}

\dashline{3}(36,130)(59,130)
\put(36,130){\vector(-1,0){1}}

\dashline{3}(116,130)(139,130)
\put(116,130){\vector(-1,0){1}}

\dashline{3}(156,130)(179,130)
\put(156,130){\vector(-1,0){1}}

\dashline{3}(196,130)(219,130)
\put(196,130){\vector(-1,0){1}}

\dashline{3}(236,130)(259,130)
\put(236,130){\vector(-1,0){1}}

\dashline{3}(276,130)(299,130)
\put(276,130){\vector(-1,0){1}}

\dashline{3}(316,130)(339,130)
\put(316,130){\vector(-1,0){1}}

\dashline{3}(356,130)(379,130)
\put(356,130){\vector(-1,0){1}}

\dashline{3}(76,40)(99,40)
\put(76,40){\vector(-1,0){1}}

\dashline{3}(36,40)(59,40)
\put(36,40){\vector(-1,0){1}}

\dashline{3}(116,40)(139,40)
\put(116,40){\vector(-1,0){1}}

\dashline{3}(156,40)(179,40)
\put(156,40){\vector(-1,0){1}}

\dashline{3}(196,40)(219,40)
\put(196,40){\vector(-1,0){1}}

\dashline{3}(236,40)(259,40)
\put(236,40){\vector(-1,0){1}}

\dashline{3}(276,40)(299,40)
\put(276,40){\vector(-1,0){1}}

\dashline{3}(316,40)(339,40)
\put(316,40){\vector(-1,0){1}}

\dashline{3}(356,40)(379,40)
\put(356,40){\vector(-1,0){1}}

\dashline{3}(5,40)(19,40)
\put(5,40){\vector(-1,0){1}}

\dashline{3}(5,80)(19,80)
\put(5,80){\vector(-1,0){1}}

\dashline{3}(5,130)(19,130)
\put(5,130){\vector(-1,0){1}}

\dashline{3}(395,40)(409,40)
\put(395,40){\vector(-1,0){1}}

\dashline{3}(395,80)(409,80)
\put(395,80){\vector(-1,0){1}}

\dashline{3}(395,130)(409,130)
\put(395,130){\vector(-1,0){1}}

\dashline{3}(376,150)(399,150)
\put(376,150){\vector(-1,0){1}}

\dashline{3}(336,150)(359,150)
\put(336,150){\vector(-1,0){1}}

\dashline{3}(296,150)(319,150)
\put(296,150){\vector(-1,0){1}}

\dashline{3}(256,150)(279,150)
\put(256,150){\vector(-1,0){1}}

\dashline{3}(216,150)(239,150)
\put(216,150){\vector(-1,0){1}}

\dashline{3}(176,150)(199,150)
\put(176,150){\vector(-1,0){1}}

\dashline{3}(136,150)(159,150)
\put(136,150){\vector(-1,0){1}}

\dashline{3}(96,150)(119,150)
\put(96,150){\vector(-1,0){1}}

\dashline{3}(56,150)(79,150)
\put(56,150){\vector(-1,0){1}}

\dashline{3}(16,150)(39,150)
\put(16,150){\vector(-1,0){1}}

\dashline{3}(376,100)(399,100)
\put(376,100){\vector(-1,0){1}}

\dashline{3}(336,100)(359,100)
\put(336,100){\vector(-1,0){1}}

\dashline{3}(296,100)(319,100)
\put(296,100){\vector(-1,0){1}}

\dashline{3}(256,100)(279,100)
\put(256,100){\vector(-1,0){1}}

\dashline{3}(216,100)(239,100)
\put(216,100){\vector(-1,0){1}}

\dashline{3}(176,100)(199,100)
\put(176,100){\vector(-1,0){1}}

\dashline{3}(136,100)(159,100)
\put(136,100){\vector(-1,0){1}}

\dashline{3}(96,100)(119,100)
\put(96,100){\vector(-1,0){1}}

\dashline{3}(56,100)(79,100)
\put(56,100){\vector(-1,0){1}}

\dashline{3}(16,100)(39,100)
\put(16,100){\vector(-1,0){1}}

\dashline{3}(376,60)(399,60)
\put(376,60){\vector(-1,0){1}}

\dashline{3}(336,60)(359,60)
\put(336,60){\vector(-1,0){1}}

\dashline{3}(296,60)(319,60)
\put(296,60){\vector(-1,0){1}}

\dashline{3}(256,60)(279,60)
\put(256,60){\vector(-1,0){1}}

\dashline{3}(216,60)(239,60)
\put(216,60){\vector(-1,0){1}}

\dashline{3}(176,60)(199,60)
\put(176,60){\vector(-1,0){1}}

\dashline{3}(136,60)(159,60)
\put(136,60){\vector(-1,0){1}}

\dashline{3}(96,60)(119,60)
\put(96,60){\vector(-1,0){1}}

\dashline{3}(56,60)(79,60)
\put(56,60){\vector(-1,0){1}}

\dashline{3}(16,60)(39,60)
\put(16,60){\vector(-1,0){1}}

\dashline{3}(376,20)(399,20)
\put(376,20){\vector(-1,0){1}}

\dashline{3}(336,20)(359,20)
\put(336,20){\vector(-1,0){1}}

\dashline{3}(296,20)(319,20)
\put(296,20){\vector(-1,0){1}}

\dashline{3}(256,20)(279,20)
\put(256,20){\vector(-1,0){1}}

\dashline{3}(216,20)(239,20)
\put(216,20){\vector(-1,0){1}}

\dashline{3}(176,20)(199,20)
\put(176,20){\vector(-1,0){1}}

\dashline{3}(136,20)(159,20)
\put(136,20){\vector(-1,0){1}}

\dashline{3}(96,20)(119,20)
\put(96,20){\vector(-1,0){1}}

\dashline{3}(56,20)(79,20)
\put(56,20){\vector(-1,0){1}}

\dashline{3}(16,20)(39,20)
\put(16,20){\vector(-1,0){1}}

\put(120,-30){Figure 6. Auslander-Reiten quiver of $\Gproj(\Lambda_3)$ with $\Lambda$ hereditary of type $\A_4$}
\end{picture}

\end{center}

\end{landscape}

\subsection{} Finally, we give an example with $\Lambda$ not hereditary.
 Let $Q$ be the quiver as Figure 3 shows. Let $A=KQ/I$ be the algebra with $I$ the ideal of $KQ$ generated by the relation $\varepsilon_1^2,\varepsilon_2^2,\varepsilon_3^2$, $\beta\alpha$, $\beta\varepsilon_2-\varepsilon_3\beta$ and $\alpha\varepsilon_1- \varepsilon_2\alpha$.
 Let $\Lambda=K(1\stackrel{\alpha}{\longleftarrow}2\stackrel{\beta}{\longleftarrow}3)/\langle \beta\alpha\rangle$. Then $A\cong \Lambda_2=\Lambda\otimes_K K[X]/(X^2)$.  Note that $\Lambda$ is a tilted algebra of type $\A_3$. The Auslander-Reiten quiver of $\Lambda_2$ is displayed in Figure 7.
The modules on the leftmost column of the figure has to be identified with the corresponding module on the rightmost one.

\begin{center}\setlength{\unitlength}{0.6mm}
 \begin{picture}(120,100)(0,0)
\put(0,20){\tiny $\setlength{\arraycolsep}{1pt}\renewcommand{\arraystretch}{0.5}{\begin{array}{ccc} 0&0&0\\ 1&0&0\\ 1&1&0\\ 0&1&1  \end{array} }$ }

\put(0,70){\tiny $\setlength{\arraycolsep}{1pt}\renewcommand{\arraystretch}{0.5}{\begin{array}{ccc} 0&0&0\\ 0&0&0\\ 1&1&0\\ 0&0&0  \end{array} }$ }

\put(20,0){\tiny $\setlength{\arraycolsep}{1pt}\renewcommand{\arraystretch}{0.5}{\begin{array}{ccc} 0&0&0\\ 0&0&0\\ 0&1&1\\ 0&1&1  \end{array} }$ }

\put(20,40){\tiny $\setlength{\arraycolsep}{1pt}\renewcommand{\arraystretch}{0.5}{\begin{array}{ccc} 0&0&0\\ 1&0&0\\ 1&1&0\\ 0&0&0  \end{array} }$ }

\put(40,20){\tiny $\setlength{\arraycolsep}{1pt}\renewcommand{\arraystretch}{0.5}{\begin{array}{ccc} 0&0&0\\ 0&0&0\\ 0&1&1\\ 0&0&0  \end{array} }$ }
\put(40,50){\tiny $\setlength{\arraycolsep}{1pt}\renewcommand{\arraystretch}{0.5}{\begin{array}{ccc} 0&0&0\\ 1&1&0\\ 1&1&0\\ 0&0&0  \end{array} }$ }
\put(40,70){\tiny $\setlength{\arraycolsep}{1pt}\renewcommand{\arraystretch}{0.5}{\begin{array}{ccc} 0&0&0\\ 1&0&0\\ 0&0&0\\ 0&0&0  \end{array} }$ }
\put(60,40){\tiny $\setlength{\arraycolsep}{1pt}\renewcommand{\arraystretch}{0.5}{\begin{array}{ccc} 0&0&0\\ 1&1&0\\ 0&1&1\\ 0&0&0  \end{array} }$ }
\put(60,90){\tiny $\setlength{\arraycolsep}{1pt}\renewcommand{\arraystretch}{0.5}{\begin{array}{ccc} 1&0&0\\ 1&0&0\\ 0&0&0\\ 0&0&0  \end{array} }$ }
\put(80,70){\tiny $\setlength{\arraycolsep}{1pt}\renewcommand{\arraystretch}{0.5}{\begin{array}{ccc} 1&0&0\\ 1&1&0\\ 0&1&1 \\ 0&0&0 \end{array} }$ }

\put(80,20){\tiny $\setlength{\arraycolsep}{1pt}\renewcommand{\arraystretch}{0.5}{\begin{array}{ccc}  0&0&0\\ 1&1&0\\ 0&0&0 \\0&0&0 \end{array} }$ }

\put(51,26){\vector(1,1){9}}

\put(51,47){\vector(2,-1){9}}

\put(51,65){\vector(1,-2){9}}

\put(11,65){\vector(1,-2){9}}

\put(11,26){\vector(1,1){9}}

\put(51,76){\vector(1,1){9}}
\put(31,6){\vector(1,1){9}}

\put(71,36){\vector(1,-1){9}}
\put(31,36){\vector(1,-1){9}}
\put(11,16){\vector(1,-1){9}}
\put(71,86){\vector(1,-1){9}}

\put(31,43){\vector(2,1){9}}

\put(31,47){\vector(1,2){9}}
\put(71,47){\vector(1,2){9}}

\dashline{3}(40,20)(12,20)
\put(12,20){\vector(-1,0){1}}

\dashline{3}(80,20)(52,20)
\put(52,20){\vector(-1,0){1}}

\dashline{3}(40,70)(12,70)
\put(12,70){\vector(-1,0){1}}

\dashline{3}(80,70)(52,70)
\put(52,70){\vector(-1,0){1}}

\dashline{3}(5,40)(20,40)
\put(5,40){\vector(-1,0){1}}

\dashline{3}(72,40)(85,40)
\put(72,40){\vector(-1,0){1}}

\put(19.5,-7){\begin{picture}(10,20)
\put(0,0){\line(1,0){12}}
\put(0,0){\line(0,1){17}}
\put(12,0){\line(0,1){17}}
\put(0,17){\line(1,0){12}}
\end{picture}}

\put(39.5,43){\begin{picture}(10,20)
\put(0,0){\line(1,0){12}}
\put(0,0){\line(0,1){17}}
\put(12,0){\line(0,1){17}}
\put(0,17){\line(1,0){12}}
\end{picture}}

\put(59.5,83){\begin{picture}(10,20)
\put(0,0){\line(1,0){12}}
\put(0,0){\line(0,1){17}}
\put(12,0){\line(0,1){17}}
\put(0,17){\line(1,0){12}}
\end{picture}}

\put(-60,-20){Figure 7. Auslander-Reiten quiver for $\Gproj(\Lambda_2)$ with $\Lambda$ a tilted algebra of type $\A_3$.}

\end{picture}
\vspace{1.4cm}
\end{center}

As a comparison, let $B=KQ/I$ be the algebra with $I$ the ideal of $KQ$ generated by the relation $\varepsilon_1^2,\varepsilon_2^2,\varepsilon_3^3$, $\beta\varepsilon_2-\varepsilon_3\beta$ and $\alpha\varepsilon_1-\varepsilon_2\alpha$. Then $B\cong \Lambda_2=\Lambda\otimes_K K[X]/(X^2)$ for some hereditary algebra $\Lambda$ of type $\A_3$. The Auslander-Reiten quiver of $\Lambda_2$ is displayed in Figure 8.
The modules on the leftmost column of the figure has to be identified with the corresponding module on the rightmost one.

\begin{center}\setlength{\unitlength}{0.6mm}
 \begin{picture}(120,90)(0,0)
\put(0,20){\tiny $\setlength{\arraycolsep}{1pt}\renewcommand{\arraystretch}{0.5}{\begin{array}{ccc} 0&0&0\\ 1&0&0\\ 1&1&0  \end{array} }$ }

\put(0,60){\tiny $\setlength{\arraycolsep}{1pt}\renewcommand{\arraystretch}{0.5}{\begin{array}{ccc} 0&0&0\\ 0&0&0\\ 1&1&1  \end{array} }$ }

\put(20,0){\tiny $\setlength{\arraycolsep}{1pt}\renewcommand{\arraystretch}{0.5}{\begin{array}{ccc} 0&0&0\\ 1&1&0\\ 1&1&0  \end{array} }$ }

\put(20,40){\tiny $\setlength{\arraycolsep}{1pt}\renewcommand{\arraystretch}{0.5}{\begin{array}{ccc} 0&0&0\\ 1&0&0\\ 1&1&1  \end{array} }$ }

\put(40,20){\tiny $\setlength{\arraycolsep}{1pt}\renewcommand{\arraystretch}{0.5}{\begin{array}{ccc} 0&0&0\\ 1&1&0\\ 1&1&1  \end{array} }$ }

\put(40,60){\tiny $\setlength{\arraycolsep}{1pt}\renewcommand{\arraystretch}{0.5}{\begin{array}{ccc} 0&0&0\\ 1&0&0\\ 0&0&0  \end{array} }$ }

\put(60,0){\tiny $\setlength{\arraycolsep}{1pt}\renewcommand{\arraystretch}{0.5}{\begin{array}{ccc} 0&0&0\\ 1&1&1\\ 1&1&1  \end{array} }$ }

\put(60,40){\tiny $\setlength{\arraycolsep}{1pt}\renewcommand{\arraystretch}{0.5}{\begin{array}{ccc} 0&0&0\\ 1&1&0\\ 0&0&0  \end{array} }$ }

\put(60,80){\tiny $\setlength{\arraycolsep}{1pt}\renewcommand{\arraystretch}{0.5}{\begin{array}{ccc} 1&0&0\\ 1&0&0\\ 0&0&0  \end{array} }$ }

\put(80,60){\tiny $\setlength{\arraycolsep}{1pt}\renewcommand{\arraystretch}{0.5}{\begin{array}{ccc}  1&0&0\\ 1&1&0  \\0&0&0\end{array} }$ }

\put(80,20){\tiny $\setlength{\arraycolsep}{1pt}\renewcommand{\arraystretch}{0.5}{\begin{array}{ccc}  0&0&0\\ 1&1&1 \\0&0&0 \end{array} }$ }

\put(11,56){\vector(1,-1){9}}

\put(11,26){\vector(1,1){9}}
\put(11,16){\vector(1,-1){9}}

\put(51,56){\vector(1,-1){9}}

\put(51,26){\vector(1,1){9}}
\put(51,16){\vector(1,-1){9}}

\put(31,36){\vector(1,-1){9}}
\put(31,6){\vector(1,1){9}}
\put(31,46){\vector(1,1){9}}
\put(51,66){\vector(1,1){9}}
\put(71,36){\vector(1,-1){9}}
\put(71,6){\vector(1,1){9}}
\put(71,46){\vector(1,1){9}}
\put(71,76){\vector(1,-1){9}}

\put(59.5,75){\begin{picture}(10,20)
\put(0,0){\line(1,0){12}}
\put(0,0){\line(0,1){13}}
\put(12,0){\line(0,1){13}}
\put(0,13){\line(1,0){12}}
\end{picture}}
\put(59.5,-5){\begin{picture}(10,20)
\put(0,0){\line(1,0){12}}
\put(0,0){\line(0,1){13}}
\put(12,0){\line(0,1){13}}
\put(0,13){\line(1,0){12}}
\end{picture}}
\put(19.5,-5){\begin{picture}(10,20)
\put(0,0){\line(1,0){12}}
\put(0,0){\line(0,1){13}}
\put(12,0){\line(0,1){13}}
\put(0,13){\line(1,0){12}}
\end{picture}}

\dashline{3}(40,20)(12,20)
\put(12,20){\vector(-1,0){1}}

\dashline{3}(80,20)(52,20)
\put(52,20){\vector(-1,0){1}}

\dashline{3}(40,60)(12,60)
\put(12,60){\vector(-1,0){1}}

\dashline{3}(80,60)(52,60)
\put(52,60){\vector(-1,0){1}}

\dashline{3}(60,40)(32,40)
\put(32,40){\vector(-1,0){1}}

\dashline{3}(5,40)(20,40)
\put(5,40){\vector(-1,0){1}}

\dashline{3}(72,40)(87,40)
\put(72,40){\vector(-1,0){1}}

\put(-40,-20){Figure 8. Auslander-Reiten quiver for $\Gproj(\Lambda_2)$ with $\Lambda$ hereditary of type $\A_3$}

\end{picture}
\vspace{1.8cm}
\end{center}


\begin{thebibliography}{AAA}
%\bibitem{As} H. Asashiba, A generalization of Gabriel's Galois covering functors and derived equivalences. J. Algebra 334(1)(2011), 109-149.
\bibitem{ALT} M. J. Asensio, J. A. L\'{o}pez Ramos and B. Torrecillas, \emph{Gorenstein GR-injective and GR-projective modules},
Comm. Algebra {\bf 26}(1) (1998), 225--240.
\bibitem{ASS} I. Assem, D. Simson and A. Skowro\'{n}ski, Elements of representation theory of associative algebras, Volume 1, Techniques of Representation Theory. London Math. Soc. Student Texts, vol. {\bf 65}, Cambridge Univ. Press, Cambridge, New York (2004).
\bibitem{AR} M. Auslander and I. Reiten, \emph{Cohen-Macaulay and Gorenstein artin algebras}, In: Progress in Math. {\bf95}, Birkh\"{a}user Verlag, Basel, 1991, 221--245.


\bibitem{AM} L. L. Avramov and A. Martsinkovsky, \emph{Absolute, relative and Tate cohomology of modules of finite Gorenstein dimensions}, Proc. London Math. Soc. {\bf85}(3) (2002), 393--440.
\bibitem{BHN} P. Bahiraer, R. Hafezi and A. Nematbakhsh, \emph{Homotopy category of $N$-complexes of projective modules}, J. Pure Appl. Algebra {\bf220}(6) (2016), 2414--2433.

%\bibitem{Bea} M. Beattie, A generalization of the smash product of a graded ring. J. Pure Appl. Algebra 52(1988), 219-226.

\bibitem{Be2} A. Beligiannis, \emph{On algebras of finite Cohen-Macaulay type}, Adv. Math. {\bf226} (2011), 1973--2019.
\bibitem{BGS} G. Bobi\'{n}ski, C. Gei{\ss} and A. Skowro\'{n}ski, \emph{Classification of discrete derived categories}, Cent. Eur. J. Math.
{\bf2}(2014), 19--49.





\bibitem{Bu} R. Buchweitz, \emph{Maximal Cohen-Macaulay modules and Tate cohomology over Gorenstein Rings}, Unpublished Manuscript, 1987. Available at Http://hdl.handle.net/1807/16682.

%\bibitem{Ch1} Xiao-Wu Chen, The stable monomorphism category of Frobenius category, Math. Res. Lett. 18 (2011) 125--137.
%\bibitem{Ch2} Xiao-Wu Chen, Three results on Frobenius categories, Math. Z. 270 (2012) 43--58.

%\bibitem{CM} M. Cohen and S. Montgovery, Group-Graded rings, Smash products, and Group actions. Transations of AMS, 282(1)(1984), 237-258.
%\bibitem{CB} W.W. Crawley-Boevey, On tame algebras and bocses, Proc. London Math. Soc. 56
%(1988), 451-483.



\bibitem{EJ} E. E. Enochs and O. M. G. Jenda, Relative homological algebra, de Gruyter Exp. Math. 30, Walter de Gruyter Co., 2000.

\bibitem{Fu} C. Fu, \emph{On root categories of finite-dimenisonal algebras}, J. Algebra {\bf370} (2012), 233--265.
\bibitem{Ga} P. Gabriel, \emph{The universal cover of a representation-finite algebras} (Puebla, 1980), In: Lecture Notes in Math., vol. {\bf903}, Springer, Berlin-New York, 1981, 68--105.
\bibitem{GL} W. Geigle, H. Lenzing, A class of weighted projective curves arising in representation theory of finite dimensional algebras, Singularities, representation of algebras, and vector bundles, Lect. Notes Math. {\bf1273} (1987), 265--297.
\bibitem{GR} C. Gei{\ss} and I. Reiten, \emph{Gentle algebras are Gorenstein}, in Representations of algebras and related topics, Fields Inst. Commun. {\bf45}, Amer. Math. Soc., Providence, RI (2005), 129--133.

\bibitem{GLS} C. Gei{\ss}, B. Leclerc and J. Schr\"{o}er, \emph{Quivers with relations for symmetrizable Cartan matrices I: Foundations}, Invent. Math. {\bf 209}(1) (2017), 61--158.

\bibitem{GG} R. Gordon and E. L. Green, \emph{Representation theory of graded Artin algebras}, J. Algebra {\bf76} (1982), 138--152.


\bibitem{Ha1} D. Happel, \emph{On the deived category of a finite-dimesional algebra}, Comment. Math. Helv. {\bf62}(3) (1987), 339--389.
\bibitem{Ha2} D. Happel, Triangulated Categories in the Representation Theory of Finite Dimensional
Algebras, London Math. Soc. Lecture Notes Ser. {\bf119}, Cambridge Univ. Press, Cambridge,
1988.
\bibitem{Ha3} D. Happel, \emph{On Gorenstein algebras}, In: Progress in Math. 95, Birkh\"{a}user Verlag, Basel, 1991, 389--404.

\bibitem{HRS} D. Happel, I. Reiten and S. Smal{\o}, \emph{Piecewise hereditary algebras}, Arch. Math. {\bf66} (1996), 182--186.
\bibitem{HR} D. Happel and C. M. Ringel, \emph{The derived category of a tubular algebra}, In: Representation
Theory I, Springer Lecture Notes in Mathematics {\bf1177} (1984), 156--180.
\bibitem{HLXZ} W. Hu, X.-H. Luo, B.-L. Xiong and G. Zhou, \emph{Gorenstein projective bimodules via monomorphism categories and filtration categories}, J. Pure Appl. Algebra {\bf223}(3) (2019), 1014--1039.

\bibitem{IKM} O. Iyama, K. Kato and Jun-ichi Miyachi, \emph{Derived categories of $N$-complexes}, Preprint, available
at arXiv:1309.6039, 2014.


\bibitem{IO} O. Iyama and S. Oppermann, \emph{Stable categories of higher preprojective algebras}, Adv. Math. {\bf244} (2013), 23--68.

\bibitem{Ka} M. Kalck, \emph{Singularity categories of gentle algebras}, Bull. London Math. Soc. {\bf47}(1) (2015), 65--74.

\bibitem{Ke1} B. Keller, \emph{Deriving DG categories}, Ann. Sci. Ec. Horm. Super. 27 (1994), 63--102.
\bibitem{Ke} B. Keller, \emph{On triangulated orbit categories}, Doc. Math. {\bf10} (2005), 551--581.
\bibitem{KR} {\sc B. Keller, and I. Reiten}, {\em Cluster-tilted algebras are Gorenstein and stably Calabi-Yau}, Adv. Math. {\bf 211} (2007), 123--151.
\bibitem{KLM0} D. Kussin, H. Lenzing and H. Meltzer, \emph{Nilpotent operators and weighted projective lines}, J. Reine Angew. Math. {\bf685} (2013), 33--71.
\bibitem{KLM} D. Kussin, H. Lenzing and H. Meltzer, \emph{Triangle singularities, ADE-chains, and weighted projective lines},
Adv. Math. {\bf237} (2013), 194-251.
\bibitem{Len} H. Lenzning, \emph{Weighted projective lines and applications}, In: A. Skowr\'{o}nski, K. Yamagata, eds. Representations of Algebras and Related Topics. Z\"{u}rich: Europ. Math Soc. 2011, 153--187.

\bibitem{Le} Z. Leszczy\'{n}ski, \emph{On the representation type of tensor product algebras}, Fund. Math. {\bf144} (1994),
143--161.
\bibitem{LS1} Z. Leszczy\'{n}ski and A. Skowro\'{n}ski, \emph{Tame triangular matrix algebras}, Colloq. Math. {\bf86}(2) (2000), 259--303.
\bibitem{LS2} Z. Leszczy\'{n}ski and A. Skowro\'{n}ski, \emph{Tame tensor products of algebras}, Colloq. Math. {\bf98}(1) (2003), 125--145.

\bibitem{LiZ} Z. W. Li and P. Zhang, \emph{Gorenstein algebras of finite Cohen-Macaulay type}, Adv. Math. {\bf223} (2010),
728--734.
%\bibitem{Liu} G. X. Liu, On the structure of tame graded basic Hopf algebras. J. Algebra 299(2006), 841-853.
%\bibitem{LO} S. Liu and V. Oystaeyen, Group graded rings, smash products
%and additive categories. Perspectives in ring theory (Antwerp, 1987),
%299-310, NATO Adv. Sci. Inst. Ser. C: Math. Phys. Sci., 233, Kluwer
%Acad. Publ., Dordrecht,(1988).
\bibitem{LP} Y. Lin and L. Peng, \emph{Elliptic Lie algebras and tubular algebras}, Adv. Math. {\bf196} (2005), 487--530.
\bibitem{LZ} M. Lu and B. Zhu, \emph{Singularity categories of Gorenstein monomial algebras}, Preprint, avaialbel at arXiv:1708.00311[math.RT].
\bibitem{LuoZ1} X.-H. Luo and P. Zhang, \emph{Monic representations and Gorenstein-projective modules}, Pacific
J. Math. {\bf264} (2013), 163--194.
\bibitem{LuoZ2} X.-H. Luo and P. Zhang, \emph{Monic monomial representations I Gorenstein-projective modules},
Preprint, available at arXiv:1510.05124.
\bibitem{MT} N. Mahdou and M. Tamekkante, \emph{On Gorenstein global dimension of tensor products of algebras over a field}, Gulf Journal of Mathematics {\bf3}(2) (2015), 30--37.
\bibitem{M} H. Meltzer, \emph{Derived tubular algebras and APR-tilts}, Colloq. Math. {\bf87}(2) (2001), 171--179.
\bibitem{Na} C. Nastsescu, \emph{Some Constructions over Graded Rings: Applications}, J. Algebra {\bf120} (1989), 119--138.
\bibitem{Or1} D. Orlov, \emph{Triangulated categories of singularities and D-branes in Landau-Ginzburg models}, Proc. Steklv Inst. Math. {\bf246}(3) (2004), 227--248.

\bibitem{PX} L. Peng and J. Xiao, \emph{Root categories and simple Lie algebras}, J. Algebra {\bf198}(1) (1997),
19-56.
\bibitem{PX2} L. Peng and J. Xiao, \emph{Triangulated categories and Kac-Moody algebras}, Invent. Math. {\bf140}(3) (2000), 563--603.

\bibitem{Ri} J. Rickard, \emph{Derived categories and stable equivalences}, J. Pure Appl. Algebra {\bf61} (1989), 303--317.
\bibitem{Ric3} J. Rickard, \emph{Derived equivalences as derived functors}, J. London Math. Soc. {\bf43}(2) (1991),
37--48.
\bibitem{Rin} C. M. Ringel, Tame algebras and integral quadratic forms, Lecture Notes in Mathematics,
{\bf 1099}, Springer, Berlin, New York, 1984.
\bibitem{RS0} C. M. Ringel and M. Schmidmeier, \emph{Submodule categories of wild representation type}, J. Pure Appl. Algebra {\bf205} (2006) 412--422.
\bibitem{RS} C. M. Ringel and M. Schmidmeier, \emph{Invariant subspaces of nilpotent linear operators}, I.. J. Reine Angew. Math. {\bf614} (2008), 1--52.
\bibitem{RZ} C. M. Ringel and P. Zhang, \emph{Representations of quivers over the algebra of dual numbers}, J. Algebra {\bf475} (2017), 237--360.
\bibitem{Shen} D. Shen, \emph{A description of Gorenstein projective modules over the tensor products of algebras}, Preprint, available at arXiv:1602.00116[math.RT].
\bibitem{Sim} D. Simson, \emph{Chain categories of modules and subprojective representations of posets over uniserial algebras}, Rocky Mt. J. Math.
{\bf32}(4) (2002), 1627--1650.
\bibitem{Sim2} D. Simson, \emph{Representation types of the category of subprojective representations of a finite poset over $K[t]/(t^m)$ and a
solution of a Birkhoff type problem}, J. Algebra {\bf311} (2007) 1--30.
\bibitem{Sim3} D. Simson, \emph{Representation-finite Birkhoff type problems for nilpotent linear operators}, J. Pure Appl. Algebra {\bf222}(8) (2018), 2181--2198.

\bibitem{S} A. Skowro\'{n}ski, \emph{On tame triangular matrix algebras}, Bull. Polish Acad. Sci. Math.
{\bf34} (1986), 517--523.
\bibitem{VdB} M. Van den Bergh, \emph{Existence theorems for dualizing complexes over non-commutative graded
and filtered rings}, J. Algebra {\bf195} (1997), 662--679.

\bibitem{Vo} D. Vossieck, \emph{The algebras with discrete derived category}, J. Algebra {\bf243} (2001), 168--176.
\bibitem{Ya} K. Yamaura, \emph{Realizing stable categories as derived categories}, Adv. Math. {\bf248} (2013), 784--819.


\end{thebibliography}
\end{document}